\documentclass[a4paper,12pt,oneside]{amsart}

\usepackage{amssymb}  
\usepackage{latexsym} 
\usepackage{comment}
\usepackage{color}
\usepackage{colonequals}

 \usepackage[usenames,dvipsnames]{pstricks}
 \usepackage{epsfig}
 \usepackage{pst-grad} 
 \usepackage{pst-plot} 

\definecolor{darkgreen}{rgb}{0,0.5,0}
\usepackage[
        colorlinks, citecolor=darkgreen,
        pdfauthor={Ingrid Bauer, Fabrizio Catanese},
        pdftitle={Rigidity of the Kummer covering},
        linktocpage        
]{hyperref}

\usepackage[alphabetic,backrefs,lite]{amsrefs} 

\usepackage[all]{xy} 

\usepackage{enumerate} 


\newcommand\sC{{\mathcal C}}

\newcommand\sL{{\mathcal L}}

\newcommand\sB{{\mathcal B}}


\newcommand\la{\lambda}
\newcommand\Lam{\Lambda}

\newcommand\e{\epsilon}
\newcommand\s{\sigma}

\newcommand\Ga{\Gamma}

\newcommand\de{\delta}

\DeclareMathOperator{\Pic}{Pic}

\DeclareMathOperator{\Ext}{Ext}
\DeclareMathOperator{\Hom}{Hom}

\DeclareMathOperator{\Def}{Def}
\DeclareMathOperator{\Supp}{Supp}

\DeclareMathOperator{\kod}{kod}

\newcommand{\CC}{\ensuremath{\mathbb{C}}}
\newcommand{\RR}{\ensuremath{\mathbb{R}}}
\newcommand{\ZZ}{\ensuremath{\mathbb{Z}}}
\newcommand{\QQ}{\ensuremath{\mathbb{Q}}}

\newcommand{\sS}{\ensuremath{\mathcal{S}}}

\newcommand{\NN}{\ensuremath{\mathbb{N}}}
\newcommand{\hol}{\ensuremath{\mathcal{O}}}

\newcommand{\HH}{\ensuremath{\mathbb{H}}}
\newcommand{\BB}{\ensuremath{\mathbb{B}}}
\newcommand{\PP}{\ensuremath{\mathbb{P}}}

\newcommand{\FF}{\ensuremath{\mathbb{F}}}

\newcommand{\ra}{\ensuremath{\rightarrow}}

\def\eea{\end{eqnarray*}}
\def\bea{\begin{eqnarray*}}

\newcommand{\Proof}{{\it Proof. }}

\newcommand\dual{\mathrel{\raise3pt\hbox{$\underline{\mathrm{\thinspace d
\thinspace}}$}}}
\newcommand\qe{\ifhmode\unskip\nobreak\fi\quad $\Box$}       

\def\BOX{\hfill\lower.5\baselineskip\hbox{$\Box$}}


\newtheorem{theorem}{Theorem}[section]
\newtheorem{lemma}[theorem]{Lemma}

\newtheorem{proposition}[theorem]{Proposition}
\newtheorem{prop}[theorem]{Proposition}

\newtheorem{question}[theorem]{Question}

\theoremstyle{remark}
\newtheorem{remark}[theorem]{Remark}

\theoremstyle{definition}
\newtheorem{definition}[theorem]{Definition}

\DeclareMathOperator{\PSL}{PSL}

\DeclareMathOperator{\Aut}{Aut}

\DeclareMathOperator{\rk}{rk}

\DeclareMathOperator{\ord}{ord}

\numberwithin{equation}{section}

\addtolength{\hoffset}{-1cm}
\addtolength{\textwidth}{2cm}

\setlength{\parskip}{0.8ex plus 0.1ex minus 0.1ex}
\setlength{\parindent}{0mm}


\newcounter{nootje}
\setcounter{nootje}{1}
\renewcommand\check[1]
  {\marginpar{\tiny\begin{minipage}{20mm}\begin{flushleft}\thenootje : #1\end{flushleft}\end{minipage}}\addtocounter{nootje}{1}}
\setlength{\marginparsep}{2mm}
\setlength{\marginparwidth}{20mm}


\begin{document}

\title[Rigid surfaces]{On rigid compact complex surfaces and manifolds  }

\author{Ingrid Bauer}
\address{Mathematisches Institut,
         Universit\"at Bayreuth,
         95440 Bayreuth, Germany.}
\email{Ingrid.Bauer@uni-bayreuth.de}

\author{Fabrizio Catanese}
\address{Mathematisches Institut,
         Universit\"at Bayreuth,
         95440 Bayreuth, Germany.}
\email{Fabrizio.Catanese@uni-bayreuth.de}
\date{\today}
\thanks{
\textit{2010 Mathematics Subject Classification}: 14B12, 14J15, 14J29, 14J80, 14E20, 14F17, 14D07,
32G05, 32Q55, 32J15.\\
\textit{Keywords}: Rigid complex manifolds, branched or unramified coverings, deformation theory; projective classifying spaces. \\
The present work took place in the  framework  of  
the ERC-2013-Advanced Grant - 340258- TADMICAMT \\
We would like to thank Elisabetta Colombo, Paola Frediani and Alessandro Ghigi for useful conversations; also Philippe Eyssidieux for pointing out 
the reference to Zheng's paper, thus preventing the second author and Alessandro Ghigi from duplicating existing work.
}

 \makeatletter
    \def\@pnumwidth{2em}
  \makeatother

\maketitle

\addtocontents{toc}{\protect\setcounter{tocdepth}{1}}

\tableofcontents

\section{Introduction}

The present investigation originated from  some natural questions concerning the series  of  families of surfaces exhibited in \cite{cat-dett2}
to provide  counterexamples to a question posed by Fujita in 1982 
(see also \cite{cat-dett1}, \cite{cat-dettCR}).  The families depend on some integer invariants,  the main one being an arbitrary integer  $n$ coprime with 6,
and for $n=5$ they were first constructed in \cite{volmax}:  we  shall refer to them as BCD-surfaces.

In case $n=5$, the surfaces  $S$ are ball quotients, hence they possess a K\"ahler metric with strongly negative curvature tensor,
their universal covering $\tilde{S}$  is diffeomorphic to the Euclidean space $\RR^4$, $\tilde{S}$ is a Stein manifold, and the surfaces $S$ are rigid in all possible senses
(see the definitions given in section one).

It is natural to ask whether similar properties hold for the other BCD surfaces, in particular to ask about their rigidity.
In fact, we prove the following

\begin{theorem}\label{bcd}
The BCD surfaces are infinitesimally rigid and rigid.  As a consequence, since  their Albanese map is a semistable fibration $\alpha : S \ra B$ onto a curve $B$ of genus $b : = \frac{1}{2} (n-1)$,
and with fibres of genus $g = (n-1)$,
we get  a rigid curve $B$  inside the moduli stack $\overline{\mathfrak M_{n-1}}$ of stable curves of genus $(n-1)$.

\end{theorem}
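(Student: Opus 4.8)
The plan is to establish the single vanishing $H^1(S,\Theta_S)=0$; everything else in the statement is then formal. Indeed, infinitesimal rigidity ($H^1(S,\Theta_S)=0$) immediately yields rigidity, since it forces the base of the Kuranishi family of $S$ to be a reduced point, so that $S$ has no nontrivial deformations at all; and the assertion about $\overline{\mathfrak M_{n-1}}$ will be obtained by reinterpreting the same vanishing on the side of the Albanese fibration. As a first reduction I note that the BCD surfaces are of general type, so the Albanese map, and hence the fibration $\alpha : S \to B$, is a deformation-invariant construction: every first-order deformation of $S$ carries $\alpha$ along and induces a deformation of the pair $(B,\alpha)$. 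It therefore suffices to bound the deformations of $S$ compatible with the fibration.

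To organize the computation I would use the relative tangent sequence. Away from the finitely many nodal fibres one has
\[
0 \to \Theta_{S/B} \to \Theta_S \to \alpha^*\Theta_B \to 0,
\]
which near the semistable fibres must be replaced by its logarithmic analogue, with the critical divisor $D\subset B$ and its reduced preimage taken into account. Since the fibres have genus $g=n-1\ge 2$ we have $\alpha_*\Theta_{S/B}=0$, so the Leray sequence gives the ``vertical'' identification
\[
H^1(S,\Theta_{S/B}) \;\cong\; H^0\bigl(B, R^1\alpha_*\Theta_{S/B}\bigr) \;=\; H^0\bigl(B, m^*\Theta_{\overline{\mathfrak M_{g}}}\bigr),
\]
where $m : B \to \overline{\mathfrak M_{g}}$ is the moduli map of the fibration. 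Combined with the ``horizontal'' term, which records the deformations $H^1(B,\Theta_B(-\log D))$ of the pointed base, this shows that $H^1(S,\Theta_S)$ is exactly the space of infinitesimal deformations of the moduli map $m$ when both $B$ and $m$ are allowed to vary (the target stack being fixed).

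The actual vanishing I would extract from the explicit construction of $S$ as a Kummer, i.e.\ abelian, covering $\pi : S \to \Sigma$ of a rational (ruled) surface $\Sigma$, totally branched over an explicit configuration adapted to $\alpha$. Because $\pi$ is Galois, $\pi_*\Theta_S$ decomposes into character eigensheaves on $\Sigma$, each a twist of a logarithmic tangent sheaf by the building-data line bundles, so that $H^1(S,\Theta_S)=\bigoplus_\chi H^1\bigl(\Sigma,(\pi_*\Theta_S)_\chi\bigr)$ can be attacked one character at a time by degree/Bott-type vanishing on $\Sigma$. The routine eigenspaces --- corresponding to the vertical deformations and to the deformations of the base together with its branch configuration --- should vanish by such direct estimates. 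The main obstacle I expect is the eigenspace computing $H^0(B,m^*\Theta_{\overline{\mathfrak M_{g}}})$: showing that the family of fibres admits no variation, not even in combination with a variation of the base, is precisely the infinitesimal rigidity of the classifying map, and here one must use in an essential way the rigidity of the underlying monodromy/local system of the BCD family rather than a naive numerical count.

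Granting the vanishing, the consequence follows by running the dictionary of the second paragraph in reverse. The semistable fibration $\alpha$ determines the morphism $m : B \to \overline{\mathfrak M_{n-1}}$ to the Deligne--Mumford stack of stable genus-$(n-1)$ curves, and --- up to the finite data relating the relatively minimal model $S$ to the stable model of the family --- infinitesimal deformations of the fibred surface $S$ coincide with infinitesimal deformations of $m$ (allowing $B$ to move, with $\overline{\mathfrak M_{n-1}}$ fixed). Since $H^1(S,\Theta_S)=0$, the map $m$ admits no nontrivial deformation, and therefore the curve $m(B)\subset\overline{\mathfrak M_{n-1}}$ it sweeps out is rigid, which is the asserted rigidity of $B$ inside the moduli stack.
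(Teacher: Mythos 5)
Your write-up is a programme rather than a proof: the one statement that carries all the content, namely $H^1(S,\Theta_S)=0$, is never established. You set up a reasonable dictionary (relative tangent sequence, Leray, identification of the vertical piece with $H^0(B,m^*\Theta_{\overline{\mathfrak M_g}})$), but then write that the ``routine eigenspaces should vanish by such direct estimates'' and conclude ``granting the vanishing''. The paper explicitly warns that no such direct estimate exists: Pardini's easy vanishing criterion fails here, because there are many characters $\chi$ of $(\ZZ/n)^5$ with $H^1(\Theta_Y(-L_\chi))\neq 0$ on the degree-$5$ Del Pezzo surface $Y$, so that ``proving that all deformations are natural is a question of the same order of difficulty of proving rigidity.'' The actual argument runs in the opposite direction to yours: one does not compute on $S$ at all, but passes to the finite \'etale cover $HK_{CQ}(n)\ra S$ and invokes Proposition \ref{etale} (rigidity descends along finite unramified maps, via $\Def(X)=\Def(W)^G\subset\Def(W)^H=\Def(Z)$), reducing Theorem \ref{bcd} to Theorem \ref{hk}; the vanishing $H^1(HK_{CQ}(n),\Theta)=0$ is then proved character by character on $Y$ using Pardini's formula $\pi_*(\Omega^1_S\otimes\Omega^2_S)^\psi=\Omega^1_Y(\log D_\sigma:\psi(\sigma)\neq -1)(K_Y+\sL_\psi)$, a complete classification of the classes $K_Y+\sL_\psi$, rank conditions on subsets of the ten lines (Propositions \ref{dependencies} and \ref{rankcond}), and the general vanishing criterion of Theorem \ref{gvt}. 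None of this combinatorial and cohomological work appears in your proposal, and it cannot be replaced by ``Bott-type vanishing'' or by an appeal to ``rigidity of the underlying monodromy/local system'', which you invoke without proof exactly at the point where the whole difficulty sits.

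Two smaller remarks. Your identification $R^1\alpha_*\Theta_{S/B}\cong m^*\Theta_{\overline{\mathfrak M_g}}$ needs care at the three nodal fibres, and in the paper the Albanese fibration plays only a heuristic role (the ``suspicion of rigidity''), not a role in the proof. The final deduction --- that rigidity of $S$ forces rigidity of the image curve $B$ in $\overline{\mathfrak M_{n-1}}$, because the Albanese fibration is canonically attached to $S$ and a deformation of the moduli map would induce a deformation of $S$ --- is the one part of your argument that matches the paper.
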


An interesting feature of the fibration is that all fibres are smooth, except three fibres which are the union of two smooth curves of genus $b$
intersecting transversally in one point, so that the Jacobians of all the fibres are principally polarized Abelian varieties,
and $B$ yields a complete curve inside $\mathfrak A_{n-1}$; it is an interesting question whether  this curve  is   also rigid  inside $\mathfrak A_{n-1}$ (see \cite{moonen}, \cite{cfg} and
\cite{fgp} for related questions).

Suspicion of rigidity came from the observation that all deformations of BCD also have an Albanese map which is a fibration onto a curve of genus $b$
with exactly three singular fibres, of the same type as described above.   The proof of rigidity however follows another path: first of all  we observe that BCD surfaces admit
as a finite unramified covering the Hirzebruch-Kummer coverings $HK_{CQ}(n)$, the minimal resolution of  a covering  of the plane with Galois group $(\ZZ/n)^5$, and branched on a complete quadrangle $CQ$
 ($CQ$ is the union of the six lines in $\PP^2$ joining four points in linear general position).
 Then one observes easily (proposition \ref{etale}) that if we have a finite  unramified $ Y \ra X$, and $Y$  is rigid, then a fortiori $X$ is rigid too.
 
 Hence theorem \ref{bcd} is implied by the following stronger

\begin{theorem}\label{hk}
The Hirzebruch Kummer surfaces $HK_{CQ}(n)$  are infinitesimally rigid and rigid for all $n \in \NN, n \geq 4$.  

\end{theorem}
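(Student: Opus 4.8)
The plan is to prove infinitesimal rigidity, namely $H^1(HK_{CQ}(n), \Theta_{HK}) = 0$, since by standard deformation theory (the obstruction space being $H^2$, but infinitesimal rigidity suffices together with the fact that the Kuranishi space is then a reduced point) infinitesimal rigidity implies rigidity. Let me think carefully about how to access this cohomology group.

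Let me denote $X = HK_{CQ}(n)$, the minimal resolution of the $(\mathbb{Z}/n)^5$-Galois cover $W$ of $\mathbb{P}^2$ branched on the complete quadrangle $CQ$. The key structural fact is that $X$ is a smooth surface carrying the action of the abelian group $G = (\mathbb{Z}/n)^5$ (or $(\mathbb{Z}/n)^6$ modulo relations — I need to be careful, since the quadrangle has six lines but there are relations among the branch divisors, so the Galois group is $(\mathbb{Z}/n)^5$), with quotient $\mathbb{P}^2$ (after contracting, or rather the quotient of $X$ maps to $\mathbb{P}^2$). The tangent sheaf $\Theta_X$ then decomposes under the $G$-action into eigensheaves indexed by characters $\chi \in G^\vee \cong (\mathbb{Z}/n)^5$, and
\begin{equation*}
H^1(X, \Theta_X) = \bigoplus_{\chi \in G^\vee} H^1(X, \Theta_X)_\chi.
\end{equation*}
The strategy is to compute each eigenspace as the cohomology of an explicit invertible sheaf on $\mathbb{P}^2$ (or on the blow-up of $\mathbb{P}^2$ at the four points and triple intersection points of $CQ$), and show every summand vanishes.

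So the first step is to set up the eigensheaf decomposition precisely. Away from the ramification, $\Theta_X$ pulls back from $\Theta_{\mathbb{P}^2}$, but one must keep track of the logarithmic modification along the branch divisor and of the resolution of the singularities lying over the points where three of the six lines meet (the "triple points" of $CQ$). The cleanest route is to use the theory of abelian (Galois) covers à la Pardini: for a totally ramified abelian cover, the pushforward $\pi_*\Theta_X$ — or more usefully the pushforward of $\Theta_X$ twisted appropriately — splits into line bundles $L_\chi^{-1}$ on the base determined by the building data (the branch divisors $D_i$ and the characters describing the inertia). Concretely, I would express each eigensheaf $(\pi_* \Theta_X)_\chi$ as $\Theta_{\mathbb{P}^2}(\log CQ) \otimes L_\chi^{-1}$ corrected by the exceptional contributions, then reduce the vanishing of $H^1(X,\Theta_X)_\chi$ to the vanishing of $H^1$ of a logarithmic tangent sheaf twisted by a negative (or appropriately positive) line bundle on the blown-up plane $\widehat{\mathbb{P}^2}$.

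The heart of the argument — and the main obstacle — is the cohomology vanishing for each character. Here I expect the decisive input to be the logarithmic tangent sheaf $\Theta_{\widehat{\mathbb{P}^2}}(-\log \widehat{CQ})$ of the pair, where $\widehat{CQ}$ is the total transform (a normal crossing configuration) on the blow-up of $\mathbb{P}^2$ at the four nodes and the three triple points of the quadrangle. The complete quadrangle is the central object: the pair $(\mathbb{P}^2, CQ)$ is highly special, and after blowing up one obtains a surface whose log-tangent bundle has enough negativity that $H^1$ of all its negative twists vanishes. I would separate the character $\chi = 0$ (the trivial eigenspace, which records the deformations of the configuration $CQ$ itself together with the ambient $\mathbb{P}^2$) from the nontrivial characters. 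For $\chi = 0$ the vanishing should amount to rigidity of the configuration of six lines through four general points — which is classical, as four points in general position in $\mathbb{P}^2$ admit no moduli (they are projectively unique) and hence $CQ$ is rigid — giving $H^1(\widehat{\mathbb{P}^2}, \Theta_{\widehat{\mathbb{P}^2}}(-\log \widehat{CQ})) = 0$. For the nontrivial characters the relevant line bundles acquire strict negativity, and the vanishing should follow from a Bott-type or Kawamata–Viehweg-type vanishing theorem applied curve-by-curve, or by an explicit Euler-sequence computation on the rational surface $\widehat{\mathbb{P}^2}$.

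Finally I would assemble the pieces: confirm that summing the vanishing $H^1(X,\Theta_X)_\chi = 0$ over all $\chi \in G^\vee$ yields $H^1(X,\Theta_X) = 0$, i.e. infinitesimal rigidity of $HK_{CQ}(n)$ for every $n \geq 4$; and then invoke the general principle that an infinitesimally rigid compact complex manifold is rigid (the Kuranishi family being a single reduced point) to conclude actual rigidity. The main obstacle I anticipate is the careful bookkeeping of the eigensheaf twists over the resolution — in particular correctly incorporating the exceptional divisors of the minimal resolution of the quotient singularities lying over the triple points of $CQ$, and checking that after this correction the twisted log-tangent sheaves really are negative enough for each nontrivial $\chi$. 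Getting the numerology of these twists right, uniformly in $n$, is where the real work lies; the vanishing theorems themselves are standard once the correct sheaves are identified.
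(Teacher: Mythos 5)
Your framework is the right one and matches the paper's: blow up the four vertices of $CQ$ (obtaining the Del Pezzo surface $Y$ of degree $5$, over which the $(\ZZ/n)^5$-cover is already smooth, so no further resolution is needed), decompose $H^1(\Theta)$ into character eigenspaces, identify each eigenspace via Pardini's formulae and Serre duality with the cohomology of a twisted logarithmic sheaf on $Y$, handle the trivial character by rigidity of the configuration, and conclude local rigidity from infinitesimal rigidity. Two inaccuracies in the setup: the four base points \emph{are} the triple points of $CQ$ (the three diagonal points are ordinary nodes and must not be blown up, or you leave the degree-$5$ Del Pezzo), and, more importantly, in Pardini's formula the logarithmic poles for the character $\psi$ are taken only along the subset $\{D_\sigma : \psi(\sigma)\neq n-1\}$ of the ten lines, not along all of $CQ$; this character-dependence of the log structure is essential to everything that follows.

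The genuine gap is the vanishing for nontrivial characters. You propose that the twisted log-tangent sheaves ``acquire strict negativity'' and that a Bott- or Kawamata--Viehweg-type vanishing then applies. This is exactly what fails: the paper points out that there are plenty of characters $\chi$ with $H^1(\Theta_Y(-L_\chi))\neq 0$, so Pardini's easy criterion (and any argument resting on negativity of the twists alone) does not apply. Indeed the classes $K_Y+\mathcal{L}_\psi$ that actually occur include $0$, $E_i$, $E_{j5}-E_{i5}$, $L-E_i$, and even the effective class $2L-E_1-E_2-E_3-E_4$ --- nothing uniformly negative. The paper's vanishing instead comes from showing $h^0=\chi$ for each eigensheaf: one bounds $h^0$ of the log sheaf by the number of log components minus the rank of their Chern classes in $H^1(\Omega^1_Y)$ (the residue/Chern-class diagram), and matches this against a Riemann--Roch computation of $\chi$. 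This forces a combinatorial analysis of the rank in $\Pic(Y)$ of every possible subset $\{D_\sigma:\psi(\sigma)\neq n-1\}$ of the ten lines, a classification of the seventeen possible classes $K_Y+\mathcal{L}_\psi$, and a lengthy case-by-case verification (with separate treatment of $n=4,6$). Without this --- or some substitute for it --- your argument stops exactly where the real difficulty begins.
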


The proof of the above theorem occupies the main body of the paper, is mainly based first on the fact that our surfaces are finite Galois coverings 
of the Del Pezzo surface of degree 5, which is  the blow up of $\PP^2$ in four points in general linear position, and which is the moduli space for ordered 5-tuples of  points in $\PP^1$, and as such it admits a biregular  action 
by the symmetric group $\mathfrak S_5$. The other ingredients are  Pardini 's formulae for direct image of sheaves under Abelian coverings,
and then residue sequences associated to sheaves of  logarithmic differential forms: these lead to difficult calculations which can be handled
using symmetry (by the semidirect product of $(\ZZ/n)^5$ with $\mathfrak S_5$) and the very explicit descriptions of the Picard group of the Del Pezzo surface.

Afterwards, it became only natural to put this result in perspective: what do we know in general about rigid complex surfaces,
and about rigid compact complex manifolds?

For curves, the answer is easy: the only rigid curve is $\PP^1$ , so Kodaira dimension $0,1$ is excluded.

For complex surfaces, we can use the Enriques-Kodaira classification to show  that again Kodaira dimension $0,1$ is excluded, and more precisely
we have

\begin{theorem}\label{surfaces}
Let $S$ be a smooth compact complex  surface, which is  rigid. Then either
\begin{enumerate}
\item $S$ is a minimal surface of general type, or
\item $S$ is a Del Pezzo surface of degree $d \geq 5$ ( i.e., one of the following surfaces: $\PP^2 $,  $\PP^1 \times \PP^1 = \FF_0$,
$\FF_1 = S_8$,  $S_7, S_6, S_5$; where $S_{9-r}$ is the blow-up of $\PP^2 $
in $r$ points which are in general linear position).
\item $S$ is an Inoue surface of type $S_M$ or $S_{N,p,q,r}^{(-)}$ (cf. \cite{inoue}).
\end{enumerate}
 Surfaces in classes (2) and  (3) are  infinitesimally rigid. Rigid surfaces in class (1) are also globally rigid,
as well as those in (3),  but the only rigid surface in class (2)  is the projective plane $\PP^2$.
\end{theorem}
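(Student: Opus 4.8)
The plan is to go through the Enriques--Kodaira classification of compact complex surfaces according to the Kodaira dimension $\kappa(S)\in\{-\infty,0,1,2\}$ and, in each class, either to produce a nontrivial deformation (excluding rigidity) or to isolate the rigid members. I would keep three levels in mind: infinitesimal rigidity $H^1(S,\Theta_S)=0$, which forces the Kuranishi space to be a reduced point and hence implies local rigidity; local rigidity; and \emph{global rigidity}, which is strongest and must be argued by hand. A preliminary reduction handles non-minimality for general type: a minimal surface of general type has finite automorphism group, so the centre of a blow-down becomes a genuine modulus and a rigid surface of general type is minimal. For $\kappa\in\{0,1\}$ I will instead show directly that no surface, minimal or not, is rigid, and for $\kappa=-\infty$ the non-minimal rational surfaces must be retained, since exactly there the rigid Del Pezzo surfaces appear.

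Next one treats $\kappa=2$: a minimal surface of general type has a quasi-projective Gieseker moduli space, so a rigid one is an isolated reduced point of moduli; since a connected family of general type surfaces induces a connected family of canonical models, rigidity upgrades to global rigidity, giving class~(1). For $\kappa\in\{0,1\}$ the claim is that there are no rigid surfaces. For $\kappa=0$ I would run over the minimal models --- complex tori, $K3$, Enriques and bielliptic surfaces in the Kähler case, primary and secondary Kodaira surfaces in the non-Kähler case --- and note that each carries positive-dimensional deformations (respectively $h^1(\Theta)=4,20,10,2$, and nontrivial deformations of the defining bundle data for the Kodaira surfaces), while blowing up only increases moduli. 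For $\kappa=1$ one uses the Iitaka elliptic fibration $f\colon S\to B$ and exhibits moduli coming either from the base curve, from the $j$-invariants of the fibres, or from the configuration of the multiple and singular fibres, so that $H^1(\Theta)>0$ in every case.

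The heart of the classification is $\kappa=-\infty$. In the projective case Castelnuovo--Enriques gives a birational ruling over a curve $C$; if $g(C)\ge1$ the base curve and the ruling bundle supply moduli, so such surfaces are never rigid, while for $C=\PP^1$ I would compute $H^1(\Theta)$ directly: $\FF_n$ has $h^1(\Theta)=\max(0,n-1)$, so $\FF_0$ and $\FF_1$ are rigid whereas $\FF_n$ ($n\ge2$) deforms to $\FF_{n-2}$, and a blow-up of $\PP^2$ in $r$ points satisfies $H^1(\Theta)=0$ exactly for $r\le4$ points in general linear position. Together this yields the six Del Pezzo surfaces of degree $\ge5$, namely $\PP^2,\FF_0,\FF_1=S_8,S_7,S_6,S_5$, forming class~(2); five or more points, or any special position, introduce moduli. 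In the non-projective case the surfaces are of class~VII ($b_1=1$). For $b_2=0$ the classification (Bogomolov, Li--Yau--Zheng, Teleman) says $S$ is a Hopf surface, which has moduli, or an Inoue surface; here $S_M$ and $S^{(-)}_{N,p,q,r}$ are infinitesimally rigid, while the surfaces $S^{(+)}_{N,p,q,r}$ move in a one-parameter family and are excluded, producing class~(3). I expect the principal obstacle to be the remaining minimal class~VII surfaces with $b_2>0$: to conclude unconditionally that none of them is rigid one must invoke the structure and deformation theory of these surfaces (Kato, Nakamura, Dloussky--Oeljeklaus--Toma, Teleman), precisely where the classification is most delicate.

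Finally I would establish the global-rigidity refinements. Class~(1) is globally rigid by the Gieseker-moduli argument above, and class~(3) because the defining arithmetic data of $S_M$ and $S^{(-)}_{N,p,q,r}$ are locally constant in families while the surfaces are infinitesimally rigid, so every deformation is trivial. For class~(2) only $\PP^2$ is globally rigid: by Yau's theorem $\PP^2$ is the unique complex surface in its homeomorphism type, and since all fibres of a connected family are diffeomorphic by Ehresmann, any family containing $\PP^2$ is isotrivially $\PP^2$; by contrast $\FF_0,\FF_1,S_5,S_6,S_7$ are homeomorphic to surfaces carrying other complex structures, and I would connect each to a non-isomorphic surface by an explicit family --- $\FF_0$ to $\FF_2$ and $\FF_1$ to $\FF_3$ through the standard Hirzebruch jump, and $S_{9-r}$ ($r=2,3,4$) to a degenerate blow-up by letting the blown-up points acquire an infinitely near or otherwise special configuration. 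Beyond the $b_2>0$ class~VII input, the main technical difficulties I anticipate are the uniform production of deformations for properly elliptic surfaces and the careful construction of these connecting families.
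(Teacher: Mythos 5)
Your overall architecture (run through the Enriques--Kodaira classification, keep the three levels of rigidity separate, and argue global rigidity by hand at the end) matches the paper's, and many individual cases are treated similarly. But there is a genuine gap, which you yourself flag: the minimal class VII surfaces with $b_2>0$. You propose to ``invoke the structure and deformation theory of these surfaces'', but that classification is not complete (for general $b_2$ it rests on the still-open global spherical shell conjecture), so as written this step cannot be carried out unconditionally. The paper avoids the issue entirely with a quantitative tool missing from your proposal: combining the Kuranishi lower bound $\dim \Def(S) \geq h^1(\Theta_S)-h^2(\Theta_S)$ with Riemann--Roch gives
$$
\dim \Def(S) \ \geq \ 10\,\chi(\hol_S) - 2K_S^2 + h^0(S,\Theta_S),
$$
so any surface with $10\chi(\hol_S)-2K_S^2>0$ is automatically non-rigid, with no structure theory required. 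For a class VII surface $\chi(\hol_S)=0$ and $K_S^2=-b_2(S)$, hence $10\chi-2K^2=2b_2>0$ disposes of every case with $b_2>0$ on the spot.

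The same inequality is what makes several of your other steps into actual arguments rather than expectations: it eliminates $K3$ and Enriques surfaces ($\chi=2,1$), it forces $\chi(\hol_S)=0$ for any rigid minimal surface with $\kod=0,1$, and in the properly elliptic case the resulting $e(S)=0$ together with Zeuthen--Segre shows all singular fibres are multiples of smooth elliptic curves, so the fibration is isotrivial and $S=(B'\times E)/G$ with $G$ acting on $E$ by translations; then $E$ deforms freely. This replaces your ``moduli coming from the base, the $j$-invariants, or the fibre configuration'', which as stated is not a proof (a priori such infinitesimal deformations could be obstructed, and you only need to rule out \emph{local} rigidity, not compute $h^1(\Theta)$). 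Note also that the paper's reduction to minimal models is uniform in the Kodaira dimension: Kodaira's stability theorem gives $\dim\Aut(S')\geq 2k$, whence two independent vector fields and an effective decomposition $-K_{S'}\geq D_1+D_2$, which funnels everything into the ruled / class VII / $K\equiv 0$ trichotomy; your automorphism argument covers only general type, and for the non-minimal rational surfaces you would again need the displayed inequality (via $\chi=1$ and rigidity forcing $K_S^2\geq 5$) to cut down to degree at least $5$. The remaining ingredients of your proposal --- Gieseker moduli for class (1), the jump of the $\FF_n$ and infinitely near points for class (2), the $b_1=1$, $b_2=0$ characterization plus $\pi_1$ and semicontinuity arguments for class (3) --- are essentially the paper's.
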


 Hence, as  already observed,  for surfaces rigidity implies that the Kodaira dimension is either $ - \infty$ or maximal, equal to $2$
($S$ is of general type), so that here Kodaira dimension $\kod = 0,1$ is excluded. 

We   then show that a similar phenomenon is not true in higher dimension $ n \geq 3$

\begin{theorem}\label{kod}
For each $n \geq 3$ and for each $k = - \infty, 0,2, \dots n$ there is a rigid projective variety $X$ of dimension $n$ and Kodaira dimension $\kod (X) =k$. 

\end{theorem}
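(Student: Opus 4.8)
The plan is to dispose of the value $k=-\infty$ directly and then to realize every remaining Kodaira dimension by a single product-quotient construction. For $k=-\infty$ one takes $X=\PP^n$, which is infinitesimally (hence globally) rigid because $H^1(\PP^n,T_{\PP^n})=0$, is projective of dimension $n$, and has $\kod(\PP^n)=-\infty$. It then remains to produce, for each $k\in\{0\}\cup\{2,3,\dots,n\}$, a rigid projective $n$-fold of Kodaira dimension $k$.

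The basic building block I would use is the elliptic curve $E=\CC/\ZZ[\omega]$, with $\omega$ a primitive cube root of unity, equipped with its order-$3$ automorphism $\rho\colon z\mapsto\omega z$; on the torus $A=E^{\,m}$ I let $\rho$ act diagonally. The crucial point is that $\rho$ has \emph{no} nonzero invariants on infinitesimal deformations: representing $H^1(A,T_A)$ by constant Beltrami differentials $\mu=\sum_{i,j}\mu_{ij}\,d\bar z_j\otimes\partial_{z_i}$, the automorphism acts by $\rho^{*}\mu=(\overline{\omega}/\omega)\,\mu=\omega\,\mu$, so that $H^1(A,T_A)^{\langle\rho\rangle}=0$, and likewise $H^0(A,T_A)^{\langle\rho\rangle}=0$. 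Thus $A$ carries Kodaira dimension $0$ and is \emph{rigidified} by $\rho$, even though $A$ itself has moduli. Next I would fix a rigid projective manifold $Y$ with $\kod(Y)=k$, with $q(Y)=0$, with no vector fields $H^0(Y,T_Y)=0$, and admitting a \emph{free} automorphism $\psi$ of order $3$: for $k\geq 2$ one takes $Y$ of general type of dimension $k$ (for $k=n$ a ball quotient of dimension $n$ already suffices, with no torus factor), while for $k=0$ one takes $Y$ a rigid Calabi--Yau threefold. With $m:=n-\dim Y$ I set
\[
X:=\bigl(E^{\,m}\times Y\bigr)/\langle g\rangle,\qquad g:=(\rho,\psi),
\]
where $\langle g\rangle\cong\ZZ/3$ acts diagonally. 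Since $\psi$ is fixed-point-free on $Y$, the action of $g$ is free, so $X$ is a smooth projective $n$-fold and the quotient map is étale; because Kodaira dimension is additive on products and invariant under finite étale covers, $\kod(X)=\kod(E^{m})+\kod(Y)=0+k=k$.

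The rigidity of $X$ is then a clean computation. As $T_{E^{m}\times Y}=p_1^{*}T_{E^{m}}\oplus p_2^{*}T_Y$, the Künneth formula gives
\[
H^1(T_{E^{m}\times Y})=\bigl(H^1(T_{E^{m}})\otimes H^0(\Oh_Y)\bigr)\oplus\bigl(H^0(T_{E^{m}})\otimes H^1(\Oh_Y)\bigr)\oplus\bigl(H^1(T_Y)\otimes H^0(\Oh_{E^{m}})\bigr)\oplus\bigl(H^0(T_Y)\otimes H^1(\Oh_{E^{m}})\bigr).
\]
Passing to $\langle g\rangle$-invariants and noting that $g$ acts trivially on the $H^0(\Oh\,)$-factors, the first summand contributes $H^1(T_{E^{m}})^{\langle\rho\rangle}\otimes H^0(\Oh_Y)=0$, the second dies because $H^1(Y,\Oh_Y)=0$, the third because $H^1(Y,T_Y)=0$ (rigidity of $Y$), and the fourth because $H^0(Y,T_Y)=0$. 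Hence $H^1(X,T_X)=H^1(T_{E^{m}\times Y})^{\langle g\rangle}=0$, so $X$ is infinitesimally rigid; since vanishing of $H^1(T_X)$ forces the Kuranishi family to be a reduced point, $X$ is rigid.

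The hard part is not this cohomological bookkeeping but the \emph{existence} of the building blocks $Y$ enjoying all the required properties simultaneously: rigidity, the prescribed dimension, $q(Y)=0$, absence of vector fields, and a \emph{free} $\ZZ/3$-action. For even $k$ one can assemble $Y$ as a product of a rigid surface of general type with $p_g=q=0$ carrying a free $\ZZ/3$ (a suitable Beauville-type surface), with $\ZZ/3$ acting diagonally; the odd-dimensional general-type cases and the rigid Calabi--Yau threefold with a free $\ZZ/3$ require their own explicit constructions. The delicate issue throughout is to secure freeness of the group action while keeping $q=0$ and infinitesimal rigidity intact — the usual tension between needing an eigenvalue $1$ (for free translations) and needing no invariant deformations. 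Once such $Y$ are produced, the construction above yields, for every $n\geq 3$ and every $k\in\{-\infty,0,2,\dots,n\}$, a rigid projective variety of dimension $n$ and Kodaira dimension $k$.
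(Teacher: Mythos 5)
Your reduction and the K\"unneth/invariants computation are sound and in the same spirit as the paper, but the proof has a genuine gap exactly where you flag it: the building blocks $Y$ are never produced, and your one concrete suggestion is actually obstructed. You need $Y$ infinitesimally rigid, of general type of dimension $k$, with $q(Y)=0$ and a \emph{free} $\ZZ/3$-action. A Beauville surface $S$ has $\chi(\hol_S)=1-q+p_g=1$, and so does $S^{k/2}$; a free $\ZZ/3$-action would force $3\mid\chi(\hol_Y)$ by multiplicativity of $\chi$ under \'etale quotients, so no Beauville surface (nor any power of one) carries a free $\ZZ/3$. For odd $k$ with $2\le k\le n-1$ you offer nothing, and products of rigid surfaces cannot reach odd dimensions without a non-rigid curve factor; the rigid Calabi--Yau threefold with a free $\ZZ/3$ is likewise only postulated. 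Since the torus factor is present whenever $k<n$, and a translation on $E^m$ (which would make the action free for trivial reasons) acts trivially on $H^1(T_{E^m})$ and so destroys rigidity, your single cyclic group genuinely cannot escape the tension you describe: you must have the free action on $Y$, and you have not exhibited any admissible $Y$.

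The paper circumvents precisely this obstruction by enlarging the group and letting it act \emph{with} fixed points on every individual factor, but freely on the product. For $\kod=0$ it takes $G=(\ZZ/3)^2$ acting on $E^n$ with the action on the $i$-th factor twisted by an automorphism $\psi_i$ of $G$, chosen so that every nonzero $g$ acts freely on at least one factor while every mixed K\"unneth term $H^0(\Theta_{E_i})\otimes H^1(\hol_{E_j})$ carries a nontrivial character; for $2\le k\le n$ it uses a group of order $3^5$ with a Beauville structure, so that the stabilizers on the two triangle curves $C_1,C_2$ intersect trivially and the diagonal action on $C_1\times C_2\times C_2^{k-2}\times E^{n-k}$ is free, with the same character bookkeeping killing $H^1(\Theta)^G$. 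That device is the actual content of the theorem; without it, or without an explicit supply of rigid $Y$'s admitting free $\ZZ/3$-actions in every dimension and Kodaira dimension required, your argument only proves the case $k=-\infty$ (via $\PP^n$), the case $k=n$ (via ball quotients, where no torus factor and hence no free action is needed), and $k=0$, $n=3$ (Beauville's example taken as is).
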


The construction of these examples (the case $n=3, k=0$ is due to Beauville) is not so difficult, since essentially rigidity is preserved by products
and by rigid unramified quotients. It seems likely that the exception $\kod=1$ should not occur, at least for $n$ large; but we postpone the answer 
to this question to a  future time.

Global rigidity for rigid varieties of general type is a consequence of the existence of moduli spaces, while in the $ \kod =  - \infty$ case it  becomes
rather complicate for $n \geq 3$, as shown by the work of Siu \cite{siu1}, \cite{siu2}, 
 Hwang \cite{hwang} and Hwang-Mok \cite{hm}, essentially only the case of $\PP^n$
and of the hyperquadric $Q^n$ being solved.

Theorem \ref{surfaces} shows therefore that the problem of classifying rigid  surfaces reduces to the same question for surfaces of general type.
Here our new examples add to a not so long list:

\begin{enumerate}
\item  {\em ball quotients}: for these the  universal covering is the two-dimensional complex ball
$\BB_2 \subset \CC^2$, and  they are pluri-rigid (\cite{siu0}, \cite{mostow}), i.e.  rigid in any possible way, as it happens for the
\item {irreducible bi-disk quotients}: for these the 
universal covering of $S$ is $\BB_1\times \BB_1 \cong \HH \times \HH$, where $\HH$ is the upper half plane,
and  the fundamental group $\pi_1(S) = \Gamma$ has dense image for any 
of the two projections $\Gamma \rightarrow \PSL(2, \RR)$(\cite{jy}, \cite{mok}).
\item {\em Beauville surfaces}: these are the rigid unramified quotients of products of curves (\cite{isog}). They are  infinitesimally rigid, strongly rigid  but not  \'etale rigid,
which means that they have a finite unramified covering which is not rigid.
\item {\em Mostow-Siu surfaces}, \cite{m-s}; these are pluririgid, since they have a metric with strongly negative  curvature.
\item some {\em Kodaira fibrations}  constructed by Catanese-Rollenske \cite{cat-roll}.

\end{enumerate}

All these examples have in common the feature that their universal covering is diffeomorphic to $\RR^4$, so they are classifying spaces $K(\pi,1)$ of some finitely
generated group $\pi$. 

The previous  observations lead to two quite interesting  questions:

\begin{question} \
\begin{itemize}
\item[A)] Does there exist an infinitesimally rigid surface of general type which is not a $K(\pi,1)$ ?

\item[B)] Does there exist a rigid, but not  infinitesimally rigid surface of general type?
\end{itemize}
\end{question}

For question A), by the Lefschetz hyperplane section theorem, it would suffice to find a rigid ample divisor in a rigid threefold which is a $K(\pi,1)$.
For question B), a natural approach, due to the result of Burns and Wahl \cite{b-w},  would be to find a minimal surface of general type $S$
whose canonical model is singular and rigid.

We pose now several  questions,   hoping that  the readers will find them  interesting.

The subject of arrangements of lines in $\PP^2$ has attracted a lot of attention of algebraic geometers after the work of Hirzebruch (\cite{hirz}) 
which provided explicit examples of ball quotients as Hirzebruch-Kummer coverings branched on rigid arrangements of lines
(among these, the most famous are, beyond the complete quadrangle, the Hesse configuration $(9_4, 12_3)$ of 12 lines joining 
pairs of flexpoints of a smooth cubic curve, and its dual configuration $(12_3, 9_4)$ of the 9 lines dual to the flexpoints).

Natural questions are:
\begin{question} \
\begin{itemize}
\item[I)] For which rigid configuration $\sC$ of lines in $\PP^2$ is the associated Hirzebruch Kummer covering $ HK_{\sC}(n)$ rigid for $ n > > 0$? 

\item[II)] For which rigid configuration $\sC$ of lines in $\PP^2$ is the associated Hirzebruch Kummer covering $ HK_{\sC}(n)$ a $K(\pi,1)$ for $ n > > 0$? 

\item[III)] For which rigid configuration $\sC$ of lines in $\PP^2$ does the associated Hirzebruch Kummer covering $ HK_{\sC}(n)$ possess a K\"ahler metric
of negative sectional curvature $ n > > 0$? 

\item[IV)] For which rigid configuration $\sC$ of lines in $\PP^2$ is the associated Hirzebruch Kummer covering $ HK_{\sC}(n)$ \'etale rigid
for  $ n > > 0$? 
\end{itemize}
\end{question}

Observe that if III) has a positive answer, then also II), by the Cartan-Hadamard theorem. Moreover existence of a strongly negative metric
(\cite{siu0}, \cite{m-s}) implies \'etale rigidity.

For the surfaces $HK_{CQ}(n)$ the answer to II), III) and to strong and \'etale rigidity follows, in case that $5$ divides $n$, from the work of Fangyang Zheng \cite{zheng}
who extended the Mostow-Siu technique  to the case of normal crosisings. The case of other integers $n \geq 4$ is open.

Panov \cite{panov} asserts (without giving full details, hence without specifying explicitly the meaning of $ n > > 0$) a positive answer to II) for the surfaces $HK_{CQ}(n)$ and other examples by Hirzebruch: his method consists in finding
polyhedral metrics of negative curvature. So the following question is not yet settled:

\begin{question}
  Are  the surfaces $HK_{CQ}(n)$,  for $ n  \geq 5$, $K(\pi,1)$ spaces  (or even for $n \geq 4$)? 
 
\end{question}

 Of course one could ask a similar question also for non rigid configurations.  
 
 For rigid configurations the philosophy  that for $n >> 0$
the deformations of $ HK_{\sC}(n)$ should correspond to the ones of the configuration  is based on the following partly heuristic argument (a weaker result, i.e. up to taking the product with a smooth manifold, 
was used by Vakil in \cite{murphy} for some special configurations).

Assume that a point of the configuration $P$ has valency $v_P \geq 3$ (i.e., at least 3 lines of the configuration pass through $P$: then the point $P$ has to be blown up
and projection from $P$ induces on  $ HK_{\sC}(n)$, for $ n \geq 4$, a fibration over a curve $B_P$ of genus $\geq 2$. The existence of this fibration is a topological property of
the surface $S$ which is the minimal resolution of the singular Abelian covering of the plane; hence this fibration is stable under deformation, and
any deformation embeds in a product of generalized Fermat curves. Also the number of singularities on the fibres of each such fibration is a topological invariant, and if the  components of singular fibres
are stable by deformation (this is true for BCD surfaces, since there is only one non separating vanishing cycle), then  the exceptional curves would be
stable under deformation, and the question would be reduced to proving that the  equisingular deformations of the finite $(\ZZ/n)^r$ covering of $\PP^2$ are trivial.

The middle step seems to be the most difficult one, and that's why in this paper we are obliged to a rather computationally involved proof; another reason for this
is that the easy criterion given by Pardini (corollary 5.1 ii) of \cite{pardini}) does not apply, since it is easy to show that, $Y$ being the Del Pezzo surface of degree 5,
there are plenty of characters $\chi$ of $G = (\ZZ/n)^5$ for which $H^1 (\Theta_Y (- L_{\chi}) \neq 0$. Hence proving that all deformations are natural
is  a question of the same order of difficulty of proving rigidity.

We feel somehow that our results are like the tip of the iceberg, and to illustrate this philosophy we describe in the last section a new series of rigid line configurations,
which is in some way the most natural construction (possibly known outside of algebraic geometry?): we call this the nth iterated Campedelli Burniat 
configuration $\sC_{CB} (n)$, since for $n=1$ it was used  by Campedelli, and later by Burniat. For $n=0$ $\sC_{CB} (n)$ is the complete quadrangle,
and in the iterative steps we apply a contraction sending the `external triangle' to the one with vertices the midpoints of the sides.

For many  other known configurations, we defer to Hirzebruch's summary \cite{izv} of Hofer's thesis, and to the book \cite{bhh}.

This vast material offers ample source of examples in order to test the above questions in many concrete cases.


\section{Rigidity}

We start recalling the basic  notions of {\em rigidity} for compact complex manifolds $X$ of complex dimension $n$.

\begin{definition}\label{rigid} \
\begin{enumerate}
\item  Two compact complex manifolds $X$ and $X'$ are said to be {\em  deformation equivalent} if and only if there is a 
proper smooth holomorphic map  $$f \colon \mathfrak{X}  \rightarrow \sB 
$$ 
where $\sB$ is a connected (possibly not reduced) complex space and there are points $b_0, b_0' \in \sB$ such that the fibres $X_{b_0} : = f^{-1} (b_0),
X_{b'_0} : = f^{-1} (b'_0)$ are respectively isomorphic to $X, X'$ ($X_{b_0} \cong X, X_{b'_0} \cong X'$).

\item  Two compact complex manifolds $X$ and $X'$ are said to be {\em direct deformation of each other} if and only if there is a 
proper smooth holomorphic map  $$f \colon \mathfrak{X}  \rightarrow \sB
$$ 
as in (1), but where moreover $\sB$ is assumed to be irreducible. 
\item
Equivalently, two compact complex manifolds $X$ and $X'$ are  {\em direct deformation of each other} if and only if there is a 
proper smooth holomorphic map  $$f \colon \mathfrak{X} \rightarrow  \Delta
$$ 
where   $\Delta \subset \CC$ is the unit disk, and where $X$, respectively $ X'$, are isomorphic to fibres of $f$.
\item  Equivalently, deformation equivalence is the equivalence relation generated by the relation of  direct deformation. This means
that two compact complex manifolds $X$ and $X'$ are  {\em deformation equivalent} if and only if there is a sequence of compact complex manifolds $(X_{i})_{i \in \{0,1, \ldots, k\}}$ such that $X_0 = X$, $X_k=X'$ and $X_{i}$ is a direct deformation of $X_{i-1}$. 
\item A compact complex manifold $X$ is said to be {\em globally rigid} if for any compact complex manifold $X'$, which is deformation equivalent to $X$, we have  an isomorphism $X \cong X'$.
\item  A compact complex manifold $X$ is instead said to be  {\em (locally) rigid} (or just {\em rigid}) if for each deformation of $X$,
$$f \colon (\mathfrak{X},X)  \rightarrow (\sB, b_0)
$$ 
there is an open neighbourhood $U \subset \sB$ of $b_0$ such that $X_t := f^{-1}(t) \cong X$ for all $t \in U$.
\item  A compact complex manifold $X$ is said to be  {\em infinitesimally rigid} if 
$$H^1(X, \Theta_X) = 0,$$
where $\Theta_X$ is the sheaf of holomorphic vector fields on $X$.
\item
$X$ is said to be  {\em strongly rigid} if the set of compact complex manifolds $Y$ which are homotopically equivalent to $X$,
$\{ Y |  Y  \sim_{h.e.} X\}$ consists of a finite set of isomorphism classes of globally rigid varieties.
\item
$X$ is said to be  {\em \'etale  rigid} if every \'etale (finite unramified) cover $Y$ of $X$ is rigid (we can obviously combine this concept with the previous ones, 
and speak of  \'etale  globally rigid, \'etale infinitesimally  rigid, ...
\end{enumerate}
\end{definition}

\begin{remark}
1) If $X$ is infinitesimally rigid, then $X$ is also locally rigid. This follows by the Kodaira-Spencer-Kuranishi theory, since $H^1(X, \Theta_X)$ is the Zariski tangent space of the germ of analytic space which is the base $\Def(X)$ of the Kuranishi semiuniversal deformation of $X$.
So, if  $H^1(X, \Theta_X) =0$, $\Def(X)$ is a reduced point and all deformations are induced by the trivial deformation.
In other words, the condition of  infinitesimal rigidity is equivalent to  the condition that every deformation of $X$, when restricted to a suitable
neighbourhood $U$ of $b_0$, be isomorphic to the trivial deformation $ X \times U$.

2) More generally, the  definitions are so given that obviously strong rigidity implies global rigidity, which in turn implies local rigidity,
as well as \'etale  rigidity implies local rigidity.

3) The Fischer-Grauert theorem (\cite{gf})says conversely that if $\sB$ is reduced, then the condition of local rigidity yields
triviality of the family over a suitable neighbourhood of $b_0$.

4) Moreover, the Kuranishi theorem (\cite{kur1}, \cite{kur2}) implies that the number of moduli of $X$, defined as $ m(X) : = \dim \Def(X) $,
satisfies $$ m(X) = \dim \Def(X)  \geq h^1(X, \Theta_X) - h^2(X, \Theta_X).$$
Hence, if $\Def(X)$ is reduced, and $m(X) \geq 1$,  then necessarily $X$ is not locally rigid.
More generally,  if $\Def(X)$ is reduced, the Kuranishi family is universal 
if $ h^0(X, \Theta_X) = 0 $ or $ h^0(X, \Theta_{X_t})  $
is a locally constant function for $ t \in  \Def(X)  $ ,  \cite{wavrik}.

5) For $n : = \dim X =1$, all the  notions of rigidity are  equivalent and it is well known that the only rigid curve is $\PP^1$.

6) The following well known examples (see \cite{ravello}) illustrate the difference between global and infinitesimal rigidity.
The Segre-Hirzebruch surface $\FF_n : = \PP (\hol_{\PP^1} \oplus \hol_{\PP^1} (n))$
has a smooth Kuranishi space which is the germ at the origin of the vector space 
$$  \Ext^1 ( \hol_{\PP^1} (n),  \hol_{\PP^1}) \cong \CC^{n-1} \  {\rm for } \ n \geq 1, \ = 0  \ {\rm for } \ n=0.$$
The family parametrizes extensions
$$ 0 \ra  \hol_{\PP^1} \ra V \ra   \hol_{\PP^1} (n) \ra 0,$$
and the surfaces in the deformation are all the surfaces of the form  $\FF_{n-2k}$, $n \geq 2k$.

Hence $\PP^1 \times \PP^1 = \FF_0$ and $ \FF_1$ are infinitesimally rigid, but not globally rigid.

\end{remark}

The following is a useful general result:

\begin{theorem}\label{m(S)}
A  compact complex manifold $X$ is rigid, if and only if  the Kuranishi space $\Def(X)$ (base of the Kuranishi family of deformations)
is  $0$-dimensional. 

In particular, if $X=S$ is a smooth compact complex surface and 
$$
10 \chi(\mathcal{O}_S) - 2 K_S^2 + h^0(X, \Theta_S) >  0,
$$
then $S$ is not rigid.

\end{theorem}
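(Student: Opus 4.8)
The plan is to prove the two assertions separately, since the first (rigidity $\iff$ $\dim\Def(X)=0$) is essentially a restatement of the Kuranishi theory recalled in the preceding Remark, while the second (the numerical non-rigidity criterion for surfaces) requires Riemann--Roch together with a dimension estimate for the Kuranishi space.

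\textbf{First assertion.} For the equivalence ``rigid $\iff$ $\dim\Def(X)=0$'', I would argue as follows. If $\Def(X)$ is $0$-dimensional, then by the Kuranishi theory every deformation of $X$ is, after restriction to a suitable neighbourhood of the base point, induced by a map to the Kuranishi space $\Def(X)$; since the latter is a $0$-dimensional germ, any holomorphic map from a connected base into it is locally constant, so all nearby fibres are isomorphic to $X$, i.e. $X$ is rigid. Conversely, suppose $\dim\Def(X)\geq 1$. The Kuranishi family $f\colon(\mathfrak X,X)\to(\Def(X),0)$ is itself a deformation of $X$; I claim it has non-isomorphic fibres arbitrarily close to $X$. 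Indeed, the Kuranishi family is effectively parametrized (its Kodaira--Spencer map at $0$ is the identity on $H^1(X,\Theta_X)$), so positive-dimensionality of the base forces a genuine variation of complex structure, contradicting local rigidity. This direction uses the completeness and effectivity of the Kuranishi family; the only subtlety is that $\Def(X)$ may be non-reduced, but a positive-dimensional (even non-reduced) germ still carries a positive-dimensional reduced subgerm over which the family varies non-trivially, so the conclusion survives.

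\textbf{Second assertion.} For the numerical criterion, the idea is to bound $\dim\Def(X)$ from below and invoke the first part. By Remark 4 (the Kuranishi lower bound) one has
$$
\dim\Def(S)\ \geq\ h^1(S,\Theta_S)-h^2(S,\Theta_S)\ =\ -\chi(S,\Theta_S)+h^0(S,\Theta_S).
$$
The next step is to compute $\chi(S,\Theta_S)$ by Hirzebruch--Riemann--Roch on a surface. Using $\mathrm{ch}(\Theta_S)=2+c_1+\tfrac12(c_1^2-2c_2)$ and $\mathrm{td}(S)=1+\tfrac12 c_1+\tfrac{1}{12}(c_1^2+c_2)$, the degree-two part yields
$$
\chi(S,\Theta_S)\ =\ \int_S\Big[\tfrac12(c_1^2-2c_2)+\tfrac12 c_1\cdot c_1+2\cdot\tfrac{1}{12}(c_1^2+c_2)\Big]\ =\ \tfrac{1}{6}(7c_1^2-5c_2).
$$
Rewriting via $c_1^2=K_S^2$ and $c_2=e(S)$, and using Noether's formula $12\chi(\Oh_S)=K_S^2+e(S)$ to eliminate $e(S)=12\chi(\Oh_S)-K_S^2$, gives
$$
\chi(S,\Theta_S)\ =\ \tfrac{1}{6}\big(7K_S^2-5(12\chi(\Oh_S)-K_S^2)\big)\ =\ 2K_S^2-10\chi(\Oh_S).
$$
Hence $-\chi(S,\Theta_S)=10\chi(\Oh_S)-2K_S^2$, and the lower bound becomes
$$
\dim\Def(S)\ \geq\ 10\chi(\Oh_S)-2K_S^2+h^0(S,\Theta_S).
$$
If the right-hand side is strictly positive, then $\dim\Def(S)\geq 1$, so by the first assertion $S$ is not rigid, which is exactly the claim.

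\textbf{Main obstacle.} I expect the genuinely delicate point to be the reduced-versus-nonreduced issue in the converse direction of the first assertion: one must be careful that ``$\dim\Def(X)\geq 1$'' in the scheme-theoretic sense really produces a non-trivial variation of isomorphism type, rather than merely nilpotent structure at a point (a fat point is still $0$-dimensional, so this is a question of Krull dimension of the germ, and one should state it that way). The second assertion is, by contrast, a routine Riemann--Roch computation once the bookkeeping is set up; the only thing to keep in mind is that the inequality from Remark 4 holds without reducedness hypotheses, so no extra assumption on $\Def(S)$ is needed to deduce non-rigidity.
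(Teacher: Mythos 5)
Your treatment of the second assertion is correct and coincides with the paper's: the Kuranishi lower bound $\dim \Def(S) \geq h^1(\Theta_S)-h^2(\Theta_S)$ plus the Riemann--Roch identity $\chi(\Theta_S)=2K_S^2-10\chi(\Oh_S)$ gives exactly the stated criterion, and the inequality indeed needs no reducedness hypothesis. The ``if'' direction of the first assertion is also fine.

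The genuine gap is in the converse of the first assertion. You assert that effectivity of the Kuranishi family together with $\dim\Def(X)\geq 1$ ``forces a genuine variation of complex structure, contradicting local rigidity'', and later that the positive-dimensional reduced subgerm is one ``over which the family varies non-trivially''. That is precisely the statement to be proved, not something you may assume: a nonzero Kodaira--Spencer class at the origin does not by itself rule out that every nearby fibre is isomorphic to $X$. What is needed is a bridge from ``all fibres isomorphic'' to ``the family is trivial'', and that bridge is the Fischer--Grauert theorem \cite{gf}, which you never invoke. The paper's argument runs as follows: assuming rigidity, all fibres over $\Def(X)$ may be taken isomorphic to $X$; restricting the Kuranishi family to the reduction $B:=\Def(X)_{red}$ and applying Fischer--Grauert, this restriction is trivial, isomorphic to $B\times X$. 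If $\dim B\geq 1$, one chooses $t\in B$ at which the derivative of the inclusion $i\colon B\to\Def(X)$ is nonzero; the trivial family is then induced from the Kuranishi family at $i(t)$ both by $i$ and by a constant map, whose derivatives differ, so the Kuranishi family is not semiuniversal at $i(t)$. This contradicts Corollary 1 of \cite{meersseman1}, which guarantees semiuniversality at every nearby point because $h^0(X_t,\Theta_{X_t})$ is constant along the family (Kuranishi's theorem only gives semiuniversality at the base point itself, so some such result is indispensable once one argues at $t\neq 0$). Your sketch contains neither Fischer--Grauert nor any semiuniversality statement away from the origin, so the implication ``all nearby fibres isomorphic $\Rightarrow$ contradiction with effectivity'' is left unproven; supplying these two ingredients is the actual content of the theorem.
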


\begin{proof}
The `if' part being obvious by the versality of the Kuranishi family, we show the `only if' part.

Without loss of generality, we can assume that for all $ t \in \Def (X)$ $X_t \cong X$. Let $B \subset \Def(X)$ be the reduced subspace
$ B : = \Def(X)_{red}$. Then, by the theorem of Fischer- Grauert \cite{gf} it follows that the pull back of the Kuranishi family to $B$ is trivial,
isomorphic then to $ B \times X$.

If we assume that $B$ is not a point, then there is $t \in B$ such that  the derivative of the inclusion map $ i : B \ra \Def(X)$
is non zero. Hence the Kuranishi family is not semiuniversal in the point $ i(t)$, contradicting Corollary  1 of \cite{meersseman1},
which asserts that if $ h^0 (X_t, \Theta_{X_t})$ is constant, then the Kuranishi family is semiuniversal (versal in the author's unusual terminology)
at each point.

In the case of surfaces, we use the Kuranishi inequality and Riemann-Roch:

\begin{multline}\label{ineq}
\dim \Def(S) \geq   h^1( \Theta_S) - h^2(\Theta_S) =  - \chi(\Theta_S) + h^0(S, \Theta_S)  = \\
= 10 \chi(\mathcal{O}_S) - 2 K_S^2 + h^0(X, \Theta_S).
\end{multline}

\end{proof}

Now, before we dwell  upon the analysis of rigidity in complex dimension $2$, let us make a few easy but important observations.
First of all, if $X$ or $Y$ are not rigid, then the same holds for the product $X \times Y$.  We can moreover say when is 
$X \times Y$ infinitesimally rigid.

\begin{prop}
Let $X$ and $Y$ be infinitesimally rigid compact complex manifolds.  Then $X \times Y$ is  infinitesimally rigid
if and only if 
$$ h^0(X, \Theta_X) \cdot h^1 (Y, \hol_Y) =  h^0(Y, \Theta_Y)  \cdot h^1 (X, \hol_X)  = 0 .$$
\end{prop}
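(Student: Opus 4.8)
The plan is to compute $H^1(X \times Y, \Theta_{X \times Y})$ via the K\"unneth formula and reduce the vanishing condition to the stated product conditions. First I would use the canonical splitting of the tangent sheaf of a product. If $p\colon X\times Y\to X$ and $q\colon X\times Y\to Y$ are the two projections, then $\Theta_{X\times Y}\cong p^*\Theta_X\oplus q^*\Theta_Y$, so that
\begin{equation*}
H^1(X\times Y,\Theta_{X\times Y})\cong H^1(X\times Y,p^*\Theta_X)\oplus H^1(X\times Y,q^*\Theta_Y).
\end{equation*}
The manifold $X\times Y$ is infinitesimally rigid precisely when both summands vanish, so it suffices to analyze each separately; by symmetry I treat only the first.

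Next I would apply the projection formula together with the K\"unneth theorem to the first summand. Since $p^*\Theta_X = \Theta_X\boxtimes \hol_Y$, the K\"unneth formula gives
\begin{equation*}
H^1(X\times Y,\, p^*\Theta_X)\cong \bigl(H^0(X,\Theta_X)\otimes H^1(Y,\hol_Y)\bigr)\oplus\bigl(H^1(X,\Theta_X)\otimes H^0(Y,\hol_Y)\bigr).
\end{equation*}
Here I use that $H^0(Y,\hol_Y)=\CC$ because $Y$ is compact and connected, and that $H^1(X,\Theta_X)=0$ by the hypothesis that $X$ is infinitesimally rigid. Hence the second term drops out, and the first summand reduces to $H^0(X,\Theta_X)\otimes H^1(Y,\hol_Y)$, which has dimension $h^0(X,\Theta_X)\cdot h^1(Y,\hol_Y)$. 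The symmetric computation gives the second summand dimension $h^0(Y,\Theta_Y)\cdot h^1(X,\hol_X)$.

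Putting the two together, $H^1(X\times Y,\Theta_{X\times Y})=0$ if and only if both products of Hodge numbers vanish, which is exactly the stated criterion. I would then note that since each of the two nonnegative dimensions must vanish independently, the condition is equivalently written as the single display in the statement. The main point requiring care, rather than being a genuine obstacle, is justifying the two inputs to the K\"unneth argument: the tangent bundle splitting for a product of compact complex manifolds, and the fact that the complex analytic K\"unneth formula applies to the coherent sheaves $p^*\Theta_X$ and $q^*\Theta_Y$ on the compact complex manifold $X\times Y$. Both are standard for compact complex manifolds (the latter because K\"unneth holds for coherent cohomology of a product of compact complex spaces when one tensors an exterior box product), so no serious difficulty arises; the computation is essentially a bookkeeping exercise once these tools are in place.
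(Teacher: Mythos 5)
Your proposal is correct and follows essentially the same route as the paper: the paper likewise invokes the K\"unneth formula to obtain $H^1(X\times Y,\Theta_{X\times Y})\cong H^1(X,\Theta_X)\oplus\bigl(H^0(X,\Theta_X)\otimes H^1(Y,\hol_Y)\bigr)\oplus\bigl(H^1(X,\hol_X)\otimes H^0(Y,\Theta_Y)\bigr)\oplus H^1(Y,\Theta_Y)$ and then uses the vanishing of the two $H^1(\Theta)$ terms from infinitesimal rigidity. Your intermediate step of splitting $\Theta_{X\times Y}\cong p^*\Theta_X\oplus q^*\Theta_Y$ merely makes explicit what the paper leaves implicit.
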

\Proof
The result is an easy consequence of the K\"unneth formula by which
$$ H^1 (  X \times Y, \Theta_{X \times Y}) \cong $$
$$ \cong   H^1(X, \Theta_X) \oplus  (H^0(X, \Theta_X) \otimes  H^1 (Y, \hol_Y))
\oplus (H^1 (X, \hol_X) \otimes  H^0(Y, \Theta_Y)) \oplus H^1(Y, \Theta_Y).$$ 

\qed

\begin{prop}\label{etale}
If $p : Z \ra X$ is \'etale, i.e. a finite unramified holomorphic map between compact complex manifolds,
 then  the  infinitesimal  rigidity of $Z$ implies the  infinitesimal  rigidity of 
$X$.
Moreover,  if $Z$ is rigid, then also $X$ is rigid.

\end{prop}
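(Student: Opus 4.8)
The plan is to exploit the pullback map on holomorphic vector fields. Since $p \colon Z \ra X$ is \'etale, it is a local biholomorphism, so the differential gives a canonical identification $p^* \Theta_X \cong \Theta_Z$ of locally free sheaves on $Z$. First I would use this to relate cohomology upstairs and downstairs. Because $p$ is finite, the Leray spectral sequence degenerates and $H^i(Z, \Theta_Z) \cong H^i(X, p_* \Theta_Z)$; since $p$ is unramified of some degree $d$, the pushforward $p_* \Theta_Z = p_*(p^* \Theta_X)$ contains $\Theta_X$ as a direct summand (via the projection formula, $p_* p^* \Theta_X \cong \Theta_X \otimes p_* \hol_Z$, and $\hol_X$ is a direct summand of $p_* \hol_Z$ because $p$ is \'etale, so the trace map splits the unit $\hol_X \ra p_* \hol_Z$). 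Consequently $H^1(X, \Theta_X)$ is a direct summand of $H^1(Z, \Theta_Z)$, and infinitesimal rigidity of $Z$, i.e. $H^1(Z, \Theta_Z) = 0$, forces $H^1(X, \Theta_X) = 0$. This proves the first assertion.

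For the second statement, the plan is to transport deformations of $X$ to deformations of $Z$ and apply Theorem \ref{m(S)}. By that theorem it suffices to show $\Def(X)$ is $0$-dimensional assuming $\Def(Z)$ is. The key construction is that a family of deformations of $X$ over a base $\sB$ induces, by taking fibred unramified covers, a family of deformations of $Z$: the \'etale cover $Z \ra X$ corresponds to a subgroup of finite index in $\pi_1(X)$, equivalently to a representation of $\pi_1(X)$ into a symmetric group, and this topological datum is locally constant in a family. Concretely, given $f \colon (\mathfrak{X}, X) \ra (\sB, b_0)$, after shrinking $\sB$ one has $\pi_1(\mathfrak{X}_t) \cong \pi_1(X)$ for all nearby $t$, so the finite-index subgroup defining $Z$ determines an unramified cover $\mathfrak{Z} \ra \mathfrak{X}$ restricting to $Z \ra X$ over $b_0$; the composite $\mathfrak{Z} \ra \sB$ is a deformation of $Z$. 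This yields a morphism of Kuranishi spaces $\Def(X) \ra \Def(Z)$ inducing on Zariski tangent spaces exactly the injection $H^1(X, \Theta_X) \into H^1(Z, \Theta_Z)$ from the first part.

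It then remains to check that this map $\Def(X) \ra \Def(Z)$ is itself \emph{injective} (as a map of germs), so that $0$-dimensionality of $\Def(Z)$ forces $0$-dimensionality of $\Def(X)$. Injectivity follows because the correspondence ``deformation of $X$ $\mapsto$ its canonical \'etale cover'' is reversible: from a deformation $Z_t$ of $Z$ equipped with its deck-transformation action of the finite group $G = \pi_1(X)/\pi_1(Z)$ (when the cover is Galois; in general one passes to the Galois closure, which is again \'etale), the quotient $Z_t/G$ recovers the corresponding deformation of $X$. Thus the map has a left inverse on the level of germs and is injective. Applying Theorem \ref{m(S)} to both $Z$ and $X$ then gives: $Z$ rigid $\Rightarrow \dim \Def(Z) = 0 \Rightarrow \dim \Def(X) = 0 \Rightarrow X$ rigid.

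The main obstacle I anticipate is the second part, specifically making the passage ``deformation of $X$ induces deformation of $Z$'' rigorous at the level of Kuranishi germs rather than merely topologically. One must ensure the \'etale cover can be spread out holomorphically over the (possibly non-reduced) Kuranishi base and that the resulting $\Def(X) \ra \Def(Z)$ is a genuine morphism of analytic germs, not just a set-theoretic map on isomorphism classes; the cleanest route is to observe that an unramified cover is a locally trivial topological datum, hence extends uniquely over any connected base once fixed on the central fibre, and that the complex structure on the cover is uniquely determined by that of the base since $p$ is a local biholomorphism. The Galois-versus-non-Galois distinction is a minor technical point, handled by replacing $Z$ with its Galois closure $\tilde Z$ over $X$, which is still \'etale and still rigid (again by the first part applied to $\tilde Z \ra Z$), so no generality is lost.
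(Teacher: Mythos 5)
Your proof of the first assertion is the same as the paper's: the projection formula gives $p_*\Theta_Z = p_*p^*\Theta_X \cong \Theta_X \otimes p_*\hol_Z$, the trace splits off $\Theta_X$ as a direct summand, and $H^1(X,\Theta_X)$ becomes a direct summand of $H^1(Z,\Theta_Z)$. No issues there.

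For the second assertion there is a genuine gap at the point where you claim the morphism $\Def(X) \ra \Def(Z)$ is injective because it ``has a left inverse on the level of germs''. A point of $\Def(Z)$ is just a deformation of $Z$; it does not come equipped with a $G$-action, so the prescription ``take the quotient $Z_t/G$'' is not defined on $\Def(Z)$ and there is no morphism of germs $\Def(Z)\ra\Def(X)$ to serve as a left inverse. What is true is that the composite (deformation of $X$) $\mapsto$ ($G$-deformation of $Z$) $\mapsto$ (quotient) is the identity; but the forgetful step from $G$-deformations of $Z$ to plain deformations of $Z$ is exactly where injectivity could fail, since an isomorphism $\mathfrak{Z}\cong\mathfrak{Z}'$ of the induced families need not be $G$-equivariant and hence need not descend to the quotients. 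The paper sidesteps this by passing to the Galois closure $W$ of $Z$ over $X$ and invoking from \cite{montecatini} the identifications $\Def(X)=\Def(W)^G$ and $\Def(Z)=\Def(W)^H$, which realize $\Def(X)$ as a closed subgerm of $\Def(Z)$ with no further argument. Your own route can, however, be repaired without any left inverse: you already observed that the differential of $\Def(X)\ra\Def(Z)$ at the base point is the split injection $H^1(X,\Theta_X)\into H^1(Z,\Theta_Z)$ from the first part. For a morphism of germs of complex spaces, injectivity of the differential on Zariski tangent spaces is equivalent, by Nakayama, to surjectivity on cotangent spaces, hence forces the morphism to be a closed immersion of germs; in particular $\dim\Def(X)\le\dim\Def(Z)=0$, and Theorem \ref{m(S)} concludes. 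So the infinitesimal computation you performed already suffices, and the ``reversibility'' argument should simply be removed.
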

\Proof
For the first assertion, simply observe that $  H^1(Z, \Theta_Z) =  H^1(X, p_* (\Theta_Z) ) = 0$,
and that $p_* (\Theta_Z) = p_* (p^* \Theta_X)) =  \Theta_X \otimes (p_* \hol_Z)$
has $\Theta_X $ as a direct summand.

Proof of the second assertion: we have seen in theorem \ref{m(S)} that rigidity is equivalent to the condition that $\Def(X)$ is zero dimensional.

If $X$ is not rigid,  we pass to an unramified covering $W$ of $Z$, which is a Galois cover of $X$.

In this way there is a subgroup $ H \subset G$ such that $ Z = W / H$, while $X = W / G$.

We use now a result from \cite{montecatini}: 
$$  \Def (X) = \Def(W)^G , \  \Def (Z) = \Def(W)^H \Rightarrow \Def (X) \subset \Def (Z) .$$ 

We conclude: if  $ \Def (Z)$ has dimension $0$, i.e. it is a (possibly non reduced) point, a fortiori  $\Def (X)$
is a point.

\qed
\begin{remark}
1) The preceding propositions show that in dimension strictly higher than $2$ it is easy to construct many examples
of infinitesimally rigid varieties by taking \'etale quotients of products of infinitesimally rigid varieties.
More generally, if $G$ is a finite group acting on $X, Y$ in such a way that  both actions are rigid
(this is a weaker notion than the rigidity of $X,Y$), then necessarily the quotient $W:   (X \times Y)/G$
is infinitesimally rigid ($W$ is a manifold if the diagonal action of $G$ on $X \times Y$ is free).

2) The example of Beauville surfaces, the rigid quotients $ (X \times Y)/G$, where $X$ and $Y$ are 
curves of respective genera greater or equal to $2$ shows that the converse to proposition \ref{etale}
does not hold. Beauville surfaces are rigid, globally rigid, strongly rigid \cite{isogenous}, but not \'etale rigid.

\end{remark}

\begin{theorem}
Let $S$ be a smooth compact complex  surface, which is (locally) rigid. Then either
\begin{enumerate}
\item $S$ is a minimal surface of general type, or
\item $S$ is a Del Pezzo surface of degree $d \geq 5$, $\PP^2 $ or $\PP^1 \times \PP^1 = \FF_0$,
$\FF_1 = S_8$, or $S_7, S_6, S_5$; where $S_{9-r}$ is the blow-up of $\PP^2 $
in $r$ points which are in general linear position.
\item $S$ is an Inoue surface of type $S_M$ or $S_{N,p,q,r}^{(-)}$ (cf. \cite{inoue}).
\item
Rigid surfaces in class (1) are also globally rigid, surfaces in class  (3) are  infinitesimally and globally rigid,
 surfaces in class (2) are infinitesimally rigid, but the only rigid surface in class (2)  is the projective plane $\PP^2$.
\end{enumerate}
\end{theorem}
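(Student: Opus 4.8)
The plan is to run through the Enriques--Kodaira classification by Kodaira dimension, in each class either producing a nontrivial deformation (so $S$ is not rigid) or verifying $H^1(S,\Theta_S)=0$ and identifying the surface. The universal tool is the Kuranishi inequality of Theorem \ref{m(S)}: since $\dim\Def(S)\ge 10\chi(\Oh_S)-2K_S^2+h^0(S,\Theta_S)$, any surface whose right-hand side is positive is not rigid. I would first record a minimality reduction valid for $\kod(S)\ge 0$: such a surface, if not minimal, is the blow-up of its unique minimal model, and since the relevant automorphism group is too small to absorb the centre, moving a blown-up point yields a genuine one-parameter deformation; hence a rigid surface of nonnegative Kodaira dimension is minimal. (In the rational case I instead count moduli of the blown-up configuration directly, because $\FF_1$ is a rigid non-minimal rational surface.)

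For $\kod(S)=2$ the reduction gives class (1), and global rigidity then follows from the existence of the Gieseker moduli space of surfaces of general type: a rigid surface is an isolated, hence $0$-dimensional, point, so it is its own connected component, and two minimal surfaces of general type are deformation equivalent exactly when they lie in one component. For $\kod(S)=1$ and $\kod(S)=0$ the inequality already forbids rigidity when $\chi(\Oh_S)>0$ (K3 with $\chi=2$, Enriques with $\chi=1$, properly elliptic surfaces with positive $\chi$); when $\chi(\Oh_S)=0$ --- abelian, bielliptic, and primary or secondary Kodaira surfaces --- the torus or elliptic direction supplies a global vector field, so $h^0(S,\Theta_S)\ge 1$ and again $\dim\Def(S)\ge 1$.

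For $\kod(S)=-\infty$ I would separate the Kähler and the non-Kähler cases. A surface ruled over a curve of genus $g$ satisfies $\dim\Def(S)\ge 6(g-1)+h^0(S,\Theta_S)$, positive for $g\ge 2$, while for $g=1$ the elliptic base forces $h^0(S,\Theta_S)\ge 1$; so a rigid surface here is rational. Among minimal rational surfaces, $\PP^2$ has $H^1(\Theta)=0$ and is even globally rigid, whereas $\FF_n$ is infinitesimally rigid precisely for $n=0,1$ and, by the extension family recalled in the Remark, $\FF_0$ and $\FF_1$ are deformation equivalent to the non-isomorphic $\FF_2$ and $\FF_3$, hence not globally rigid. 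For blow-ups, $H^1(\Theta)$ equals the number of moduli of the point configuration, which vanishes exactly when at most four points in general position are blown up; this isolates $S_8=\FF_1, S_7, S_6, S_5$ (the Del Pezzo surfaces of degree $\ge 5$), infinitesimally rigid but, being deformation equivalent to blow-ups of the $\FF_n$ with $n\ge 2$, not globally rigid. This is class (2).

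It remains to treat the non-Kähler surfaces of class VII ($b_1=1$): non-minimal ones are blow-ups and not rigid; minimal ones with $b_2>0$ carry positive-dimensional moduli; and for $b_2=0$ the Bogomolov--Li--Yau--Zheng--Teleman classification leaves only Hopf and Inoue surfaces, the Hopf surfaces carrying explicit parameters, while among Inoue surfaces exactly the types $S_M$ and $S^{(-)}_{N,p,q,r}$ have $H^1(\Theta)=0$ (the type $S^{(+)}$ admitting a one-parameter family), giving the infinitesimally and globally rigid class (3). I expect two points to be genuinely delicate. The first is the properly elliptic case with $\chi(\Oh_S)=0$: here $e(S)=0$ forces every fibre to be smooth, possibly multiple, and the Kuranishi inequality is vacuous, so one must construct a deformation by hand --- varying the base when $g(B)\ge 2$, or otherwise moving the multiple fibres and the $j$-map. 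The second is the class VII analysis, which relies on deep classification theorems and on Inoue's original deformation computations rather than on the numerical inequality.
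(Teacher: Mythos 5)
Your overall strategy coincides with the paper's: run through the Enriques--Kodaira classification, use the Kuranishi inequality $\dim\Def(S)\ge 10\chi(\Oh_S)-2K_S^2+h^0(S,\Theta_S)$ to kill most classes, and handle general type via Gieseker, the rational case via moduli of point configurations, and class VII via the Hopf/Inoue classification. One structural difference worth noting: for the minimality reduction the paper does not ``move the blown-up point'' directly. It invokes Kodaira's stability theorem to conclude that rigidity of $S$ forces rigidity of each pair $(S_{i+1},p_{i+1})$, hence $\dim\Aut(S')\ge 2k$, hence two independent vector fields and an injection $\Oh(D_1)\oplus\Oh(D_2)\into\Theta_{S'}$ giving $-K_{S'}\ge D_1+D_2$; this works uniformly in all Kodaira dimensions, whereas your argument that ``the automorphism group is too small to absorb the centre'' breaks down e.g. for a blown-up abelian surface, where translations do absorb the moved point (that case is of course non-rigid for other reasons, but your reduction as stated does not prove it).

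The genuine gap is the properly elliptic case with $\chi(\Oh_S)=0$, which you explicitly leave as ``construct a deformation by hand,'' and the repairs you suggest do not work as stated. Since $e(S)=0$, Zeuthen--Segre forces every fibre of $\varphi=\varphi_{|mK_S|}$ to be a multiple of a smooth elliptic curve; in particular the $j$-invariant is constant and $\varphi$ is \emph{isotrivial}, so ``moving the $j$-map'' is not available, and ``varying the base'' is not visibly a deformation of $S$ without more structure. The paper's resolution is precisely to exploit isotriviality: there is a finite Galois cover $B'\ra B$ with group $G$ such that the normalization of $B'\times_B S$ is $B'\times E$, and $G$ acts on $E$ by translations (a fixed point would produce a rational component in some fibre). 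Writing $S=(B'\times E)/G$, one deforms the elliptic curve $E$ (equivalently, this presentation exhibits the vertical translation vector field, so $h^0(\Theta_S)\ge 1$ and your own Kuranishi-inequality trick, which you declared vacuous here, would in fact close the case). Without this step your case analysis is incomplete, since these surfaces have $\chi=0$, $K_S^2=0$ and are not excluded by any of your other arguments.
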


\begin{remark}
For surfaces of general type it is expected  to find examples which are  rigid,
but not infinitesimally rigid: such an  example would be the one of  a minimal surface $S$ such that its canonical model $X(S)$ is infinitesimally rigid
and singular (see \cite{b-w}).

\end{remark}

\begin{proof}

Let us begin with the last statement, (4).

1) A locally rigid minimal surface of general type is also globally rigid, due to the existence of a global moduli space for surfaces of general type
\cite{gieseker}.

2) A Del Pezzo surface $S$ of degree $d \geq 5$ is infinitesimally rigid, but $S$ is globally rigid if and only if $S \cong \PP^2$.
This was already observed for $\FF_0, \FF_1$.  For $S_{9-r}$ with $ r \geq 2$ it suffices to move the second point
of the blow up until it becomes infinitesimally near to the first, so that we get $Y$ with $- K_Y$  not ample. Since $-K_S$ is ample,
$Y$ is a deformation of $S$ which is not isomorphic to $S$. 

 3) The infinitesimal rigidity of the Inoue surfaces in (3) was shown in \cite{inoue}.
For their global rigidity, observe that the Inoue surfaces, belonging to three classes,  $S_{N,p,q,r}^{(-)}$,  $S_M$ or $S_{N,p,q,r}^{(+)}$,
are characterized (\cite{inoue}, \cite{teleman})  by the condition that  $b_1(S)=1, b_2(S) = 0$ and that they contain no curves; 
the condition $b_1(S)=1, b_2(S) = 0$ is stable by deformation and singles out also the
minimal  Hopf surfaces. The fundamental group $\pi_1(S)$  is also invariant by deformation; it suffices then to show that the fundamental groups
of surfaces in class (3) cannot be equal to the fundamental groups of a Hopf surface or of an Inoue surface of type $S_{N,p,q,r}^{(+)}$.
This is easy for the case of Hopf surfaces, which have a finite unramified covering which is a primary Hopf surface, diffeomorphic to $S^3 \times S^1$:
hence for a Hopf surface $\pi_1(S)$ has cohomological dimension $1$ ($\Ga$ has cohomological dimension $n$ if $H^i (\Ga, \QQ) = 0 \ \forall i > n$). While the universal covering of a Inoue surface is $\HH \times \CC$, hence for an Inoue surface $H^i (\pi_1(S), \QQ) = H^i (S, \QQ) $ and $\pi_1(S)$ has cohomological dimension $4$.

In the case of $S_{N,p,q,r}^{(+)}$ it is quicker to use a deformation theoretic argument. In fact for   a surface of type $S_{N,p,q,r}^{(+)}$ $H^i (S, \Theta) = \CC$ for $i=0, i=1$,
and $H^2 (S, \Theta) = 0$ (implying that the Kuranishi family has a basis $\Def (S)$ smooth of dimension $1$ containing only surfaces of the same type), while for surfaces in (3)  $H^i (S, \Theta) = \CC$ for $i=0,1,2$: hence,  
by semicontinuity of the dimension of $H^i (S, \Theta) = \CC$ for $i=0, i=1$, an Inoue surface as in (3) cannot be a limit of a family of Inoue surfaces
of type $S_{N,p,q,r}^{(+)}$.

Let us now proceed with the proof of the main statements, (1)-(3).

a) Let $S$ be a rigid smooth compact complex surface and assume that $S$ is not minimal. Let 
$$
S =S_0 \rightarrow S_1 \rightarrow \ldots \rightarrow S_k =S'
$$
be a sequence of point blow-ups such that $S'$ is minimal. By Kodaira's theorem (cf. page 86 of \cite{kodaira63}), if $S_i$ is rigid, then also the pair $(S_{i+1}, p_{i+1})$ (where $S_i \rightarrow S_{i+1}$ is the blow-up in $p_{i+1} \in S_{i+1}$) is rigid.

Varying the point $p_{i+1} \in S_{i+1}$ we see that 
$$
\dim \Aut(S') \geq 2k.
$$
In particular there are two linearly independent global holomorphic vector fields on $S'$ and we get an exact sequence
$$
0 \rightarrow \mathcal{O}_{S'}(D_1) \oplus \mathcal{O}_{S'}(D_2) \rightarrow \Theta_{S'} \rightarrow \mathcal{F} \rightarrow 0,
$$
where $D_1, D_2 \geq 0$ are effective divisors on $S'$ and $\mathcal{F}$ is a coherent sheaf with $\dim \Supp(\mathcal{F}) \leq 1$.

Taking the determinant, this implies that $-K_{S'} \geq D_1+D_2$. Therefore either
\begin{itemize}
\item[i)] $- K_{S'}$ is a strictly effective divisor, or
\item[ii)] $K_{S'} \equiv 0$.
\end{itemize}

Case i) bifurcates:

\begin{itemize}
\item
(i-1) if $S'$ is algebraic, then $S'$ is ruled (since $K_{S'}H<0$ for $H$ very ample)
\item
(i-2)
 if $S'$ is not algebraic, then $S'$ is a surface of class $VII_0$, in particular $b_1(S') = 1, q(S') = 1, p_g(S')= 0 \Rightarrow \chi (S') =0, b_2(S') = e (S') =  -K^2_{S'}.$
\end{itemize}
In case (ii) $S'$ is either a K3-surface, or a complex torus,  or a Kodaira surface. But all these surfaces are not rigid:  K3 surfaces  have $\chi(S')= 2$
hence we can apply theorem \ref{m(S)}.  That  tori and Kodaira surfaces are not rigid is well known (for Kodaira surfaces, look at \cite{kodaira}).

In case   (i-2) again theorem \ref{m(S)} applies as soon as $b_2(S') > 0$, since $10 \chi(S') - 2 K^2_{S'} = 2 b_2(S') > 0$.

If instead $S'$ is a surface of class $VII_0$ with $b_2(S') = 0$, rigidity for $S$ (again by  theorem \ref{m(S)} ) implies that $k = 0$.
Finally, if $k=0$ then $S = S'$ and  $S$ is a Hopf surface or an Inoue surface (\cite{teleman}). By \cite{inoue} an Inoue surface is rigid if and only if it is of type $S_M$ or of type$S_{N,p,q,r}^{(-)}$.

Hopf surfaces are not rigid by \cite{kodaira2}, \cite{kato} and  \cite{dabrowski}.

Thus we have seen that if $S$ rigid and non minimal then each of its   minimal models $S'$ is ruled.

Now, a minimal ruled surface $S'$ is either $\PP^2$ or is a $\PP^1$-bundle over a curve $C$ of genus $g=g(C)$. 

\underline{Claim.} $S'$ must be regular.

\underline{Proof of the claim:} Observe that a minimal ruled surface $S'$ has 
$$
K_{S'}^2 = 8(1-g), \ \ \chi(\hol_{S'}) = 1-g.
$$
This implies that
$$0 = \dim \Def(S) \geq 6g-6 + 2k.
$$
 Therefore either $g=0$ or $g=1$ and $k=0$. 
 
 There are now two ways to proceed.  The first argument   uses  \cite{seiler}, Lemma 2 and Lemma 3 which says  that $h^0(\Theta_{S'}) \geq 1$
 for $g=1$ and $k=0$: this is a contradiction  since then  $ \dim \Def(S') \geq h^0(\Theta_{S'}) \geq 1$.
 
 The second argument is more geometric: every ruled surface is obtained from the product $ \PP^1 \times C$ via a sequence of elementary transformations.
 Now, if the genus $g$ of $C$ is $\geq 1$, then $C$ is not rigid, moreover we can perform all the required elementary transformations that
 lead to $S$ when we deform $C$: hence we can arbitrarily deform the Albanese variety $C$ of $S$ and $S$ is not rigid.
 
 \qed
 
 Therefore $S'$ is either $\PP^2$ or a $\PP^1$-bundle over $\PP^1$, in particular $\chi(S') = \chi(S) = 1$. 
 Since $S$ is assumed to be rigid, this implies that $10 \leq 2K_S^2$, i.e. $K_S^2 \geq 5$.

\underline{Claim.} $S$ is a Del Pezzo surface, i.e., $-K_S$ is very ample.

But this follows since if one blows up special points in $\PP^1 \times \PP^1$ or in $\PP^2$, this contradicts the rigidity of $S$. 

b) We can now assume $S$ to be minimal. 

If $\kod(S) = - \infty$ and $S$ is not ruled, then $S$ is of type $VII_0$,  a case which we already treated.

Hence  the theorem will be proven once we can rule out the case where $S$ is minimal of  Kodaira dimension 0 or 1.
Assume $S$ to be minimal with $\kod(S) =0,1$. Then $K_S^2 = 0$ and $\chi(S) \geq 0$. Since $10\chi(S) -2 K_S^2 = 10 \chi(S) \leq 0$ we have $\chi(S) = 0$. This rules out, as done earlier,  $K3$-surfaces and Enriques surfaces. Moreover, we have already observed that Abelian surfaces and Kodaira surfaces 
are not rigid; the same holds for  hyperelliptic surfaces.

Hence we are reduced to the case $\kod(S) = 1$, i.e. properly elliptic surfaces.

Assume that $\kod(S) = 1$ and consider for a suitable $m >>0$ the $m$-th canonical map 
$$
\varphi = \varphi_{|mK_S|} \colon S \rightarrow B.
$$
By the formula of Zeuthen-Segre, since $\chi(S) = 0 \Rightarrow e(S) = 0,$ we see that the singular fibres are multiples of smooth elliptic curves, and $\varphi$ is isotrivial. Then there exists a finite Galois covering $B' \ra B$ such that the normalization $S'$ of the fibre product  $ B' \times_B S$ is isomorphic to a product $ B' \times E$, where
$E$ is an elliptic curve. 

Now $S = S' /G$, where $G$ acts on $B'$ with quotient $B$, and $G$ acts on $E$ by translations (since all the fibres of $\varphi $
have no rational component). This shows that we can freely deform the elliptic curve $E$, hence $S$ is not rigid.

\end{proof}

\section{Examples}

 \begin{remark}
The up to now known examples of rigid surfaces of general type are the following:
\begin{enumerate}
\item the so-called {\em ball quotients}: these are the smooth projective surfaces $S$ whose universal covering is the two-dimensional complex ball
$\BB_2$; they are infinitesimally rigid, strongly rigid  and \'etale rigid.
\item {irreducible bi-disk quotients}, i.e. the 
universal covering of $S$ is $\BB_1\times \BB_1 \cong \HH \times \HH$, where $\HH$ is the upper half plane,
moreover if we write $S = \HH \times \HH / \Gamma$ the fundamental group $\Gamma$ has dense image for any 
of the two projections $\Gamma \rightarrow \PSL(2, \RR)$; they are infinitesimally rigid, strongly rigid  and \'etale rigid.
\item {\em Beauville surfaces}; they are  infinitesimally rigid, strongly rigid  but not  \'etale rigid.
\item {\em Mostow-Siu surfaces}, \cite{m-s}; these are infinitesimally rigid, strongly rigid  and \'etale rigid.
\item some {\em Kodaira fibrations} (Catanese-Rollenske) \cite{cat-roll}; these are rigid and strongly rigid, infinitesimal rigidity
and  \'etale rigidity is not proven in \cite{cat-roll} but could be true.
\end{enumerate}
\end{remark}

\begin{definition} A compact complex manifold is called
  a {\em projective classifying space} if its universal covering $\tilde{X}$ is contractible.
\end{definition}

\begin{remark}
1) All known examples of rigid surfaces of general type are projective classifying spaces.

2) The examples (1)-(3), and (5) are strongly rigid.
\end{remark}

In the previous section we have seen that if $n = \dim (X) =2$ and $X$ is rigid, then either the Kodaira dimension of $X$ is $ - \infty$,
or $ Kod (X) = n$ ($X$ is of general type).

It is an interesting question whether we can say anything about the Kodaira dimension of rigid manifolds of a given dimension $n$.

Next, we show that we can obtain, for $n \geq 4$, rigid generalized hyperelliptic manifolds (they have Kodaira dimension $ \kod (X) = 0$).

\begin{theorem}\label{kod0}
Let $E$ be the Fermat (equianharmonic) elliptic curve, with the standard action of $G : = (\ZZ/3)^2$:
$$E : = \{ (x:y:z) \in \PP^2 | x^3 + y^3 + z^3 = 0 \},$$
let 
$$ e_1 (x:y:z) := (\e x:y:z), \  e_2 (x:y:z) := ( x:\e y:z), \  \e : = \exp (\frac{2}{3} \pi i ).$$
Define $e_3 : =  - e_1 - e_2, e_4 : = e_1 - e_2.$

Consider the following automorphisms of $G$, defined uniquely by the conditions:
\begin{itemize}
\item $\psi_1 (e_1) : = e_1$,  $\psi_1 (e_2) : = e_2$,
\item $\psi_2 (e_1) : = e_1$, $\psi_2 (e_2) : = - e_2$,
\item $\psi_3 (e_1) : = e_4$, $\psi_3 (e_2) : =  e_3$, 
\item $\psi_i (e_1) : = e_1$, $\psi_i (e_2) : = - e_4$ for $i \geq 4$, and
\end{itemize}
 let $G$ act on $E^n$ by:
$$  g (p_1, \dots, p_n) : = (  \psi_1 (g) (p_1),   \psi_2 (g) (p_2), \dots,   \psi_n (g) (p_n)).$$

Then $G$ acts on $E^n$ freely for $n \geq 4$, and $X : = (E^n)/ G $  is a   rigid compact complex manifold with
Kodaira dimension equal to zero, strongly  rigid exactly  for $n=4$.
\end{theorem}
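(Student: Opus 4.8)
The plan is to translate every assertion into the linear representation of $G=(\ZZ/3)^2$ on the tangent space of $E^n$ at a fixed point, and then into a bookkeeping of characters. First I would record that $e_1,e_2$ act on $E$ \emph{not} as translations but as order-three automorphisms: for instance $e_1$ fixes the three points $(0:1:-\epsilon^k)$, and on the holomorphic one-form $\om=dx/(3y^2)$ (in the chart $z=1$) one computes $e_1^*\om=e_2^*\om=\epsilon\,\om$. Hence the linear part of $g=ae_1+be_2$ on the tangent line of $E$ is the character $\lambda(g)=\epsilon^{-(a+b)}$, whose kernel $\langle e_4\rangle$ (with $e_4=e_1-e_2$) is exactly the subgroup of translations in $G$. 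Writing $\chi_i:=\lambda\circ\psi_i$ and using additive notation $(\alpha,\beta)$ for the character $g\mapsto\epsilon^{\alpha a+\beta b}$, the action of $G$ on $V:=T_0(E^n)=\CC^n$ is diagonal with characters $\chi_1=(2,2)$, $\chi_2=(2,1)$, $\chi_3=(0,2)$ and $\chi_i=(2,0)$ for $i\ge4$. Everything below is read off from this list.

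For freeness I would observe that $g\neq0$ has a fixed point on $E^n$ iff every coordinate automorphism $\psi_i(g)$ has a fixed point on $E$, i.e.\ has nontrivial linear part; equivalently $g$ acts freely iff $\chi_i(g)=1$ for some $i$ (so that the $i$-th coordinate is a nonzero translation). Thus the action is free iff $\bigcup_i\ker\chi_i=G$. Now $\ker\chi_1,\ker\chi_2,\ker\chi_3,\ker\chi_4$ are the four \emph{distinct} lines of $\FF_3^2$, so they cover $G$ precisely when all four occur, i.e.\ for $n\ge4$; for $n\le3$ the line $\langle e_2\rangle$ is missed and $e_2$ acts with fixed points in every coordinate. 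This gives freeness exactly for $n\ge4$. The Kodaira dimension is then immediate: $K_X$ is the $G$-linearized trivial bundle $\det V^*$ with character $\prod_i\chi_i^{-1}$ of order $3$, so $3K_X\cong\Oh_X$ while $mK_X$ has no sections for $3\nmid m$; hence $P_m(X)\in\{0,1\}$, $P_3=1$ and $\kod(X)=0$ (and, since no $\chi_i$ is trivial, $H^0(\Theta_X)=V^G=0$ and $q(X)=(V^*)^G=0$).

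For rigidity I would use that $E^n\to X$ is \'etale, so $H^1(X,\Theta_X)=H^1(E^n,\Theta_{E^n})^G$. On the torus $\Theta_{E^n}=\Oh\otimes V$ and $H^1(\Oh_{E^n})=\overline{V^*}$ carries the characters $\chi_i$, so $H^1(E^n,\Theta_{E^n})=\overline{V^*}\otimes V$ has $(i,j)$-summand of character $\chi_i\chi_j$, whence $\dim H^1(X,\Theta_X)=\#\{(i,j):\chi_i=\chi_j^{-1}\}$. The four characters $(2,2),(2,1),(0,2),(2,0)$ and their inverses $(1,1),(1,2),(0,1),(1,0)$ form disjoint sets, so no relation $\chi_i=\chi_j^{-1}$ holds and $H^1(X,\Theta_X)=0$ for every $n\ge4$; by the Kodaira--Spencer--Kuranishi remark, infinitesimal rigidity yields (local) rigidity.

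Finally, strong rigidity. Since the universal cover $\CC^n$ is contractible, $X$ is a $K(\pi,1)$, and any complex manifold $Y$ homotopy equivalent to $X$ has $\pi_1(Y)\cong\pi_1(X)=:\Gamma$; I would invoke the structure theory of generalized hyperelliptic manifolds (cf.\ \cite{montecatini}) to conclude that such $Y$ is again a free quotient $T'/G$ realizing the \emph{same} real affine $G$-action on $\Lambda=\ZZ^{2n}$, the only freedom being a $G$-invariant complex structure, i.e.\ a $G$-stable splitting $\Lambda\otimes\CC=V'\oplus\overline{V'}$. Pair by pair of inverse characters, these splittings are governed by the multiplicities $m_\chi$ in $\Lambda\otimes\CC=H^1(E^n,\CC)$, where $\om_i$ contributes $\chi_i^{-1}$ and $\bar\om_i$ contributes $\chi_i$. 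For $n=4$ these eight contributions are the eight distinct nontrivial characters of $G$, so every $m_\chi\le1$, each inverse pair forces a discrete choice, and there are only finitely many $G$-invariant complex structures, each multiplicity-free hence rigid by the count above; thus finitely many isomorphism classes, all globally rigid, and $X$ is strongly rigid. For $n\ge5$ the character $(2,0)$ and its inverse $(1,0)$ occur with multiplicity $n-3\ge2$, and choosing $V'$ of signature $(a,n-3-a)$ with $0<a<n-3$ on this isotypical part yields abelian varieties with $\ZZ[\epsilon]$-multiplication varying in a positive-dimensional Shimura family; the resulting $Y=T'/G$ are homotopy equivalent to $X$ but not rigid, so $X$ is not strongly rigid. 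The character bookkeeping for the first three claims is routine; the genuine obstacle is this last step, which needs the non-infinitesimal input that every manifold homotopy equivalent to $X$ is a torus quotient by the same $G$ with fixed real structure, together with the passage from the multiplicity-free condition to \emph{global} rigidity of each member while exhibiting, for $n\ge5$, the explicit non-rigid deformation coming from the repeated character.
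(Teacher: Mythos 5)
Your proposal is correct and follows essentially the same route as the paper: freeness and infinitesimal rigidity via the character bookkeeping of the four tangent characters $\chi_i=\lambda\circ\psi_i$ (whose kernels are the four lines of $\FF_3^2$ and whose set is disjoint from the set of their inverses), and strong rigidity for $n=4$ versus its failure for $n\geq 5$ via the torus-quotient structure theorem for manifolds homotopy equivalent to $X$ together with the multiplicity of the eigencharacters in $\Lambda\otimes\CC$ (the paper cites \cite{topm} for that input and produces the same signature-$(k,n-3-k)$ deformations). The only blemish is a harmless sign slip ($g^{*}\om=\e^{a+b}\om$ makes the tangent character $\e^{a+b}$, not $\e^{-(a+b)}$), which negates all your $\chi_i$ simultaneously and changes none of the conclusions.
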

\begin{proof}
Observe that the elements in $G$ which have fixed points on $E$ are just the multiples of the vectors $e_1,e_2, e_3 : =  - e_1 - e_2$,
whereas the non zero multiples of $e_4$ act freely.

It suffices to show that the action on $E^4$ is free, because then the action on $E^n$ is a fortiori free for $ n \geq 4$.

If $g \in G \setminus \{ 0\}$ is a  multiple of $e_1$, then $\psi_3(g)$ is a multiple of $e_4$, hence $g$ acts freely;
  (nontrivial) multiples of $e_4$ act freely since
 $\psi_1(e_4) = e_4$,   for multiples of $e_2$ this follows from   $\psi_4(e_2) = e_4$,  and for multiples of $e_3$ since  $\psi_2(e_3) = - e_4$.

Hence the action is free and $X$ is a complex manifold.

 We claim that $X$ is infinitesimally rigid. For this, let $Z:= E_1 \times \ldots \times E_n$, where $E_i \cong E$ for all $i$, and $G$ acts via $\psi_i(g)$ on $E_i$.

Then by the K\"unneth formula we have
$$H^1 (\Theta_Z) = \Big( \bigoplus_{i =1}^{n} H^1 (\Theta_{E_i}) \Big)  \oplus \Big( \bigoplus_{i=1} ^n  \Big(  H^0 (\Theta_{E_i}) \otimes (\oplus_{j\neq i}
H^1(\hol_{E_j}))  \Big)\Big).$$
 
We have
$$
H^1 (\Theta_X) = H^1 (\Theta_Z)^G = \Big( \bigoplus_{i =1}^{n} H^1 (\Theta_{E_i})^G \Big) \oplus \Big( \bigoplus_{i=1} ^n  \Big(  H^0 (\Theta_{E_i}) \otimes (\oplus_{j\neq i}
H^1(\hol_{E_j}))  \Big)^G \Big).
$$
Now, $H^1 (\Theta_{E_i})^G = 0 $, while for the other terms we define
 $\varphi$ to be the character $e_1^*$; it is the character of the representation on $H^0(\Omega^1_{E_1})$.
 
Then the character of $H^0 (\Theta_{E_i}) \otimes 
H^1(\hol_{E_j})$ equals $  - \varphi \circ \psi_i - \varphi \circ \psi_j$; it is therefore nontrivial if the character $\varphi \circ \psi_i + \varphi \circ \psi_j$
is nontrivial, which follows by our assumption. In fact the characters in question are just:

$$
\varphi \circ \psi_1 \begin{pmatrix} m_1 \\ m_2 \end{pmatrix} = m_1, \ \varphi \circ \psi_2 \begin{pmatrix} m_1 \\ m_2 \end{pmatrix} = m_2, \ \varphi \circ \psi_3 \begin{pmatrix} m_1 \\ m_2 \end{pmatrix} = m_1-m_2,
$$
and
$$
\varphi \circ \psi_i \begin{pmatrix} m_1 \\ m_2 \end{pmatrix} = m_1 + m_2, \ \forall i \geq 4.
$$

Therefore $H^1 (\Theta_X) =0$, whence $X$ is infinitesimally rigid.

Moreover, any $Y$ homotopically equivalent to $X$ has an unramified $G$-cover $W$ which has the same integral cohomology algebra
of a complex torus, hence (see \cite{topm} for this and other assertions) is a complex torus with an action of the group $G$. 

Let $\Lambda : = \pi_1(W) \cong \pi_1(E^n)$: the $G$-action on $\Lam$  is induced by conjugation via the exact sequence 
$$ 1 \ra  \Lambda \ra   \pi_1(Y) \cong \pi_1(X) \ra G\ra 1.$$

The action of $G$ on $\Lambda$  is a direct sum $\Lambda = \oplus_1^4 \Lambda_i$,
where $\Lambda_i$ is a free $R_3 : = \ZZ[x] / (x^2 + x +1)$-module, of  rank respectively 1,1,1,$n-3$,
 on which $G$ acts via the surjection $ \ZZ[G] \ra R_3$ corresponding to the character $ \phi_i : G  \ra \ZZ/3$
with kernel generated by $\psi_i^{-1} (e_4)$. 

Since for each $i \neq j \in \{1,2,3,4\}$ there is a $g \in G$ such that $\psi_i(g)$ has fixed points on $E$, whereas $\psi_j(g)$ acts freely,
we see (as in  \cite{bacainoue}, or \cite{topm} page 389) that $W$ splits as a direct sum $W_1 \times W_2 \times W_3 \times W_4$
 of three elliptic curves  and one torus, corresponding to the complex subspaces $ \Lambda_i \otimes \RR$ of the complex vector space $ \Lambda \otimes \RR$.
 
 Because the $W_i$ are stable for the $G$ action, and $\Lambda_i$ is a free $R_3 $- module, it follows that $W_i$ is the Fermat elliptic curve
 for $i=1,2,3$, and also for $i=4$ if $n=4$. 
 
 In the case where $ n > 4$, we can take a family of complex structures on $\Lambda_4 \otimes \CC = H_1 \oplus H_2$
 (here $H_1$ is the eigenspace for the character $\phi_4$, $H_2$ is the eigenspace for the character $\overline{\phi_4}$,), such that  $H^{1,0}$ is the direct sum of an arbitrary  k-dimensional subspace 
 of $H_1$ with an arbitrary  subspace of $H_2$ of dimension $n-3-k$.
 
  For $ 1 < k < n-3$ we obtain a family of non rigid manifolds, since then $$H^1 (\Theta_W)^G = (( H^{1,0})^{\vee} \otimes \overline{H^{1,0}} )^G
   \supset  \Hom ( H_1,\overline{H_2})^G  \neq 0.$$
 \end{proof}

In the following we show that for $n\geq 3$ there are examples of   rigid projective manifolds $X$ of Kodaira dimension  $2, \ldots ,n$.

\begin{theorem} \label{kod>1}

 For $n \geq 3$ there are rigid $n$-dimensional manifolds of Kodaira dimension $k$, for each $k$ with $2 \leq k \leq n$.

\end{theorem}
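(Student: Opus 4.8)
The plan is to realize every such $X$ as a free finite quotient of a product of an infinitesimally rigid manifold of general type with a power of an elliptic curve, exploiting three facts: (i) Kodaira dimension is additive on products and invariant under finite \'etale maps; (ii) by the K\"unneth formula $H^1(\Theta)$ of a product splits, so infinitesimal rigidity of the factors (together with the vanishing of their vector fields and part of their irregularity) survives in the product; and (iii) infinitesimal rigidity descends to free quotients, since $H^1(\Theta_{W/G})=H^1(\Theta_W)^G$ (compare Proposition \ref{etale}); recall also that infinitesimal rigidity implies local rigidity. Concretely, for $2\le k\le n$ I set $m:=n-k\ge 0$ and look for
$$ X:=(M\times E^m)/G,$$
where $M$ is an infinitesimally rigid projective manifold of general type of dimension $k$, $E$ is the equianharmonic elliptic curve of Theorem \ref{kod0} with its order-three automorphism, and $G:=\ZZ/3$ acts diagonally on $E^m$ through this automorphism and freely on $M$ (for $m=0$ I simply take $X=M$).

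First I would assemble the general-type factors. Since a rigid surface and a rigid threefold of general type exist (fake projective planes, Beauville surfaces, or compact ball quotients $\BB_2/\Gamma,\ \BB_3/\Gamma$, all with $h^0(\Theta)=0$ and infinitesimally rigid), taking products and using the K\"unneth splitting together with $h^0(\Theta)=0$ produces an infinitesimally rigid projective manifold of general type in every dimension $k\ge 2$. I would moreover arrange $q(M)=0$ and, when $m\ge 1$, a free $\ZZ/3$-action on $M$: this can be obtained by letting one of the factors be a suitable \'etale $\ZZ/3$-cover of a ball quotient with $3$-torsion in $H_1$ (such a cover is again a ball quotient, hence infinitesimally rigid), with $\ZZ/3$ acting through the deck group of that factor and trivially on the others. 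The diagonal order-three action on $E^m$ has fixed points, but since the action on $M$ is free the diagonal action on $M\times E^m$ is free, so $X$ is a smooth projective manifold.

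Next I would run the verifications. By construction $\dim X=k+m=n$, and since $M\times E^m\to X$ is \'etale one gets $\kod(X)=\kod(M\times E^m)=\kod(M)+\kod(E^m)=k+0=k$. For infinitesimal rigidity the K\"unneth formula gives
$$ H^1(\Theta_{M\times E^m})\cong H^1(\Theta_M)\oplus\big(H^0(\Theta_M)\otimes H^1(\hol_{E^m})\big)\oplus\big(H^1(\hol_M)\otimes H^0(\Theta_{E^m})\big)\oplus H^1(\Theta_{E^m}).$$
The first summand vanishes because $M$ is infinitesimally rigid, the second because $M$ is of general type ($H^0(\Theta_M)=0$), and the third because $q(M)=H^1(\hol_M)=0$. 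Taking $G$-invariants, it then suffices to check that $H^1(\Theta_{E^m})^{\ZZ/3}=0$: writing $g^{*}\omega=\e\,\omega$ on $H^0(\Omega^1_E)$, every weight occurring in $H^1(\Theta_{E^m})$ equals $\e$, so there are no invariants. Hence $H^1(\Theta_X)=H^1(\Theta_{M\times E^m})^{G}=0$ and $X$ is rigid, covering all cases $2\le k\le n$; note that for $m=0$ (i.e. $k=n$) no quotient is needed, while for $m\ge 3$ one may dispense with the twisting and instead take the product of $M$ with a rigid manifold of Kodaira dimension $0$ furnished by Theorem \ref{kod0}.

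The main obstacle is exactly the gap cases $k=n-1$ and $k=n-2$, where $m\in\{1,2\}$. Here the elliptic part $E^m$ admits no free rigidifying group action on its own: an order-three action with no eigenvalue $1$ necessarily has a fixed point (the isogeny $g-\mathrm{id}$ is surjective on $E^m$), whereas an eigenvalue $1$ leaves a translation direction un-rigidified --- consistent with the non-existence of rigid surfaces and curves of Kodaira dimension $0$. One is therefore forced into the twisted quotient and must produce a rigid general-type $M$ carrying a \emph{free} $\ZZ/3$-action with $q(M)=0$, which requires simultaneously controlling the freeness of the action and the vanishing $q=0$ of the chosen \'etale $\ZZ/3$-cover; this is the delicate point. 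The remaining cases $k=n$ and $k\le n-3$ are handled by the straightforward product constructions above and present no difficulty.
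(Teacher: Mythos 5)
Your overall strategy coincides with the paper's: realize $X$ as a free quotient of (a rigid general type manifold of dimension $k$) $\times\, E^{n-k}$ by a finite $3$-group acting freely on the first factor and without invariants on $H^1(\Theta_{E^{n-k}})$, then conclude by the K\"unneth decomposition and descent of infinitesimal rigidity to free quotients. The verifications you do carry out (additivity of $\kod$ under products and \'etale covers, vanishing of $H^1(\Theta_{E^m})^{\ZZ/3}$ because every weight there equals $\e$, the roles of $h^0(\Theta_M)=0$ and of the irregularity) are correct and are essentially the same as in the paper. However, the step you yourself flag as ``the delicate point'' is precisely the crux of the theorem, and you leave it open: you never actually produce, for every $k\ge 2$, an infinitesimally rigid projective $M$ of general type and dimension $k$ carrying a \emph{free} $\ZZ/3$-action for which the summand $H^1(\hol_M)\otimes H^0(\Theta_{E^m})$ has no $G$-invariants. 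Proposing ``a suitable \'etale $\ZZ/3$-cover of a ball quotient with $3$-torsion in $H_1$'' does not settle this: such a cover is again a rigid ball quotient, but its irregularity need not vanish, and controlling which characters of the deck group occur in $H^{0,1}$ of the cover is a nontrivial question that you do not address. Since your main construction is the twisted quotient, every case with $n-k\ge 1$ (so already $k=2$, $n=3$) remains unproved.

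The paper closes exactly this gap by a different, explicit choice of building blocks: a group $G$ of order $3^5$ with $G/[G,G]\cong(\ZZ/3)^2$ admitting a Beauville structure, acting diagonally on $C_1\times C_2\times C_2^{\,k-2}\times E^{n-k}$, the action on the elliptic factors being defined through $G\to G/[G,G]$ and the standard action of Theorem \ref{kod0}. Freeness on the whole product is then automatic from freeness on $C_1\times C_2$ (the Beauville property), and the vanishing of the mixed K\"unneth terms reduces to checking that certain explicit nontrivial characters of $(\ZZ/3)^2$ do not pair invariantly with $H^1(\hol_{C_i})$ --- a finite computation with the given generating triples rather than an existence statement about ball quotients. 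Two smaller points: your fallback for $m=n-k\ge 3$ via Theorem \ref{kod0} fails at $m=3$, since that theorem only yields a free action (hence a rigid quotient of $E^m$) for $m\ge 4$; and for odd $k$ your product construction also requires a rigid general type threefold with $q=0$, which again you assert rather than exhibit, whereas the paper sidesteps this by assembling the dimension-$k$ general type part entirely out of curves.
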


 \begin{proof}
 Consider the group 
 $$
 G := <x,y,z,w,t| x^3,y^3,z^3,w^3,t^3,y^x =yz,z^x =zw, z^y =zt>.
 $$
 
 This group has order $3^5$ and has abelianization $G/[G,G] = (\ZZ/3\ZZ)^2$. More precisely, $G$ sits inside an exact sequence 
 $$
 0 \rightarrow (\ZZ/3\ZZ)^3 \rightarrow G \rightarrow G/[G,G] = (\ZZ/3\ZZ)^2 \rightarrow 0,
 $$
 
 where $\varphi \colon G \rightarrow G/[G,G]$ satisfies $\varphi(x) = (1,0) = e_1$, $\varphi(y) = (0,1)=e_2$ and $\varphi(z) =\varphi(w)=\varphi(t)=0$.
 
 By \cite{barbosfair} $G$ admits a Beauville structure of type $(3,3,9), (3,3,9)$ given by $(x,y,(xy)^{-1})$, $(xt,y^2w,(xty^2w)^{-1})$, i.e., we get two triangle curves $\lambda_i \colon C_i \rightarrow C_i/G \cong \PP^1$.

Let $S= (C_1 \times C_2)/G$ be the above described Beauville surface and let $E$ be  the Fermat elliptic curve with the action  defined in theorem \ref{kod0} and consider e.g. the (non faithful) $G$-action 
 $$
 g(x) := \varphi(g)(x)
 $$
 on $E$. Then obviously the  diagonal action of $G$ on $C_1 \times C_2 \times (C_2)^{k-2} \times E^{n-k}$ is free, and the quotient is a projective manifold of 
  dimension $n$ and Kodaira dimension $k$.
We claim that $X$ is infinitesimally rigid. The proof is the same as in the proof of the infinitesimal rigidity in theorem \ref{kod0}.
 \end{proof}
  \begin{remark}
 The above manifolds are   strongly rigid for $n-k=1$, as it follows by the same proof as in  theorem \ref{kod0}.
 Twisting the action on the Fermat elliptic curves as in loc. cit. we can show strong rigidity for $n-k \leq 4.$
 
 \end{remark}

 \begin{remark}
   That there do exist rigid threefolds of Kodaira dimension $0$
 was shown by Beauville: in  \cite{katatabea},  page 5, he constructed a rigid Calabi-Yau $m$-fold, $m=3,4,6$,
 as the minimal resolution of a quotient $E^m  / (\ZZ/m)$ where $\ZZ/m$ acts on $E$
 by multiplication $ z \ra \eta z$, $\eta$ being a primitive $m$th root of unity, and diagonally on $E^m$.
  \end{remark}

\section{Results related to the geometry of the  Del Pezzo surface of degree $5$} \label{delpezzo}

Consider the smooth Del Pezzo surface $Y$ of degree $5$, which  is the blow up $Y \colon = \hat{\PP}^2(p_1, p_2,p_3,p_4)$ of the plane in  four points  $p_1, p_2, p_3, p_4 \in \PP^2$ in general position.

$Y$ contains exactly  10 lines which are in bijection with (unordered) pairs $\{i,j\} \subset \{1,2,3,4,5\}$ in the following classical way:
\begin{itemize}
\item $E_{i5}$ is the exceptional curve over $p_i$, $i \in \{1,2,3,4\}$;
\item $E_{ij}$ is the strict transform of the line in $\PP^2$ passing through $p_h$ and $p_k$, where $\{i,j,h,k\} = \{1,2,3,4\}$.
\end{itemize}

Note that by a slight abuse of notation we may also write $E_{ij}$ with $ i >j$ instead of $E_{\{i,j\}}$, just assuming silently that $E_{ij} = E_{ji}$.

The following is the intersection behaviour of the 10 lines.  We have,  for $i,j,h,k \in \{1,2,3,4,5\}$:

$$
E_{ij}E_{hk} = \left\{ \begin{array}{rl} -1, & \mbox{if} \ \  | \{i,j\} \cap \{h,k\} | = 2, \\
0, & \mbox{if} \ \ | \{i,j\} \cap \{h,k\} | = 1, \\
1, & \mbox{if} \ \  | \{i,j\} \cap \{h,k\} | = 0.
\end{array} \right.
$$

\begin{remark} \

1) The symmetric group $\mathfrak{S}_5$ acts on $Y$ by permutation of the indices $\{1, \ldots , 5\}$. This action induces a permutation of the five pencils on $Y$
which yield  conic bundle structures:
\begin{itemize}
\item $X_i:=|L-p_i|$, $i \in \{1,2,3,4\}$, whose general element is  the strict transform of a line in $\PP^2$ passing through $p_i$;
\item $X_5 := |2L -p_1-p_2-p_3-p_4|$, whose elements are the strict transforms of the conics passing through $p_1,p_2,p_3,p_4$.
\end{itemize}
2) Each of the five pencils $X_i$ has three reducible fibers corresponding to the three $(2,2)$ partitions of $\{1,2,3,4,5\} \setminus \{i\}$; e.g. for $i=1$:
$$
L-p_1 \equiv E_{34} + E_{25} \equiv E_{24} + E_{35} \equiv E_{23} + E_{45}.
$$
3) $\Pic(Y) \cong \ZZ^5$ is generated by the ten lines $E_{ij}$, $1 \leq i < j \leq 5$ and the relations are generated by the linear equivalences
associated to the pencils: for each subset $I:= \{i_1,i_2,i_3,i_4\}$ of cardinality 4 of $\{1,2,3,4,5\}$ we have
$$
E_{i_1i_2} + E_{i_3i_4} \equiv E_{j_1j_2} + E_{j_3j_4}, \ \ \mbox{for} \ \ \{j_1,j_2,j_3,j_4\} = I.
$$
\end{remark}

In the sequel we are going to study the linear independence in $\Pic(Y)$ of a subset of  the set of 10 lines $E_T$ for $T =\{t_1,t_2\} \subset \{1,2,3,4,5\}$.
In fact, we prove the following:
\begin{proposition}\label{dependencies}
Let $T_1, \ldots , T_k \subset \{1,2,3,4,5\}$ be $k$ pairwise different subsets of cardinality 2. Then we have:

\begin{enumerate}
\item  $\rk \langle E_{T_1}, \ldots , E_{T_6} \rangle < 5  \iff  $ either $ \exists \ j \in \{1,2,3,4,5\}$ s.th. $T_1, \ldots T_6 \subset \{1,2,3,4,5\} \setminus \{j\}$, or there exist $ i \neq j$ such that $\{T_1, \ldots , T_6\} $ is the set of pairs which intersect the subset  $\{i,j\}$ in exactly one element.
In  both cases: $\rk \langle E_{T_1}, \ldots , E_{T_6} \rangle =4$;
\item $\rk \langle E_{T_1}, \ldots , E_{T_5} \rangle =4 \iff $ we have two reducible fibres of a pencil $X_j$
$\iff  \exists \  j \in \{1,2,3,4,5\}$ s.th (after maybe renumbering the $T_i$'s) $j \notin T_1, T_2, T_3, T_4$ and $T_1 \cup T_2 = T_3 \cup T_4 =   \{1,2,3,4,5\} \setminus \{j\}$;
\item  $\rk \langle E_{T_1}, \ldots , E_{T_k} \rangle = 5$, if $k \geq 7$.
\end{enumerate}
\end{proposition}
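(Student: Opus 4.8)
\emph{Strategy.} The plan is to recast linear (in)dependence of the ten lines as orthogonality for the intersection form, which is nondegenerate on $\Pic(Y)\otimes\QQ\cong\QQ^5$ (signature $(1,4)$). A collection of lines has $\rk<5$ precisely when it lies in a hyperplane through the origin, and by nondegeneracy every such hyperplane equals $v^\perp$ for some nonzero $v\in\Pic(Y)\otimes\QQ$, unique up to scalar. Thus part (3) becomes the assertion that \emph{no nonzero class $v$ is orthogonal to seven of the ten lines}: granting this, seven lines can never share a common hyperplane, so they must span, giving $\rk=5$ (which is also the maximum, $\Pic(Y)\otimes\QQ$ being $5$-dimensional).

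\emph{Symmetric coordinates.} To count orthogonalities I would pass to symmetric coordinates. Writing $v=aH+b_1E_{15}+b_2E_{25}+b_3E_{35}+b_4E_{45}$ and setting $b_5:=a+b_1+b_2+b_3+b_4$, $\mu:=b_5$, a direct computation using $E_{i5}\cdot E_{j5}=-\delta_{ij}$, $H\cdot E_{i5}=0$ and $E_{ij}=H-E_{h5}-E_{k5}$ yields the single uniform formula
$$ v\cdot E_{ij}\;=\;\mu-b_i-b_j \qquad\text{for every pair } \{i,j\}\subset\{1,2,3,4,5\}. $$
The data $(\mu;b_1,\dots,b_5)$ is determined by $v$ up to the substitution $(\mu,b_i)\mapsto(\mu+2t,b_i+t)$, and one checks that $v=0$ exactly when $b_1=\dots=b_5=\mu/2$. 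In these terms $E_{ij}\perp v$ if and only if $b_i+b_j=\mu$, so the whole question reduces to one about five numbers and a target sum.

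\emph{The combinatorial core.} It then remains to bound, for $v\neq 0$, the number of pairs $\{i,j\}$ with $b_i+b_j=\mu$. Form the graph $\Gamma$ on $\{1,\dots,5\}$ having these pairs as edges. The key local rule is that two edges sharing a vertex force the two opposite endpoints to have equal $b$-values; grouping the indices by value into complementary classes $\{w,\mu-w\}$, the edges of $\Gamma$ consist precisely of a clique on the self-complementary class $b_i=\mu/2$ (of size $m$) together with complete bipartite graphs between each complementary pair. Hence the edge count is $\binom{m}{2}+\sum_w p_wq_w$ with $m+\sum_w(p_w+q_w)=5$. Maximizing this under the sole constraint $m\leq 4$ (the case $m=5$ being $v=0$), I expect the maximum to be $6$, attained only by $m=4$ (four indices equal to $\mu/2$) and by the balanced split $3+2$ of one complementary pair. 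This gives the desired bound and hence part (3). Reassuringly, the two extremal configurations are exactly the two families in part (1) — the six pairs omitting a fixed index, and the six pairs meeting a fixed doubleton in one element — so (1) falls out of the same computation; and (2) I would obtain by identifying the five-line rank-$4$ sets as those carrying a unique relation, which, since the relations among the ten lines are generated by the pencil relations $E_{i_1i_2}+E_{i_3i_4}\equiv E_{j_1j_2}+E_{j_3j_4}$ recalled above, must be a single such relation, i.e. two reducible fibres of one pencil.

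\emph{Main obstacle.} I do not anticipate a hard inequality so much as careful bookkeeping in two places: verifying the symmetric formula $v\cdot E_{ij}=\mu-b_i-b_j$ together with the $v=0$ criterion, and checking in the optimization that concentrating the non-$\mu/2$ indices in a single balanced complementary class beats every splitting, which rests on $(N_1+N_2)^2\geq N_1^2+N_2^2$. Once the clean bound ``at most six orthogonal lines unless $v=0$'' is in hand, part (3) is immediate.
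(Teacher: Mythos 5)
Your approach is genuinely different from the paper's and, for parts (1) and (3), it works. The paper argues directly on the lines: Zariski's lemma for the components of the reducible fibres of the pencils $X_j$, explicit intersection matrices, and a counting case analysis on how many of the $T_i$ contain a given index. You instead dualize: since the intersection form is nondegenerate on $\Pic(Y)\otimes \QQ$, a set of lines has rank $<5$ exactly when it lies in $v^{\perp}$ for some $v\neq 0$, and your normal form $v\cdot E_{ij}=\mu-b_i-b_j$ (which I checked: it is correct for both the exceptional curves $E_{i5}$ and the strict transforms $E_{ij}=L-E_{h5}-E_{k5}$) turns the whole question into maximizing $\binom{m}{2}+\sum_w p_wq_w$ over partitions of $5$. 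The maximum is indeed $6$, attained only at $m=4$ and at the $3{+}2$ bipartite split, and these are precisely the two families in (1); the claim that the rank is then exactly $4$ needs the small supplementary check that the space of $v$ orthogonal to all six lines is exactly one-dimensional (it is: $v$ proportional to $-2L+E_1+\dots+E_4$, resp. to $E_{ij}$), which you should state. This buys a uniform, computation-free proof of (3) and of the ``only if'' half of (1), where the paper has to run through several subcases; what it costs is that the geometric meaning (reducible fibres of the conic bundles) becomes less visible.

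There is, however, a genuine gap in your treatment of (2). From ``the relations among the ten lines are generated by the pencil relations'' you infer that the unique relation carried by a dependent $5$-tuple ``must be a single such relation.'' That inference is not valid: an element of small support in a lattice of relations need not be one of a given set of generators (it could a priori be a combination of two pencil relations that happens to have support of size $\le 5$). Your own machinery closes the gap, though: your edge count $\binom{m}{2}+\sum_w p_wq_w$ never takes the value $5$ (the possible values are $0,1,2,3,4,6$), so any five dependent lines lie inside one of the two six-line configurations of part (1); removing one line from either configuration and intersecting the two-dimensional relation space with the coordinate hyperplane of the omitted line leaves exactly the span of one pencil relation, i.e. two reducible fibres of a single pencil plus a fifth line. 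You should replace the appeal to ``generated by'' with this argument (or an equivalent direct classification of relations of support $\le 5$); as written, (2) is not proved.
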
 

\begin{proof}
We start the proof with a list of elementary observations.
\begin{remark}\label{remarks}  \

i) If $T_1, \ldots T_6 \subset \{1,2,3,4,5\} \setminus \{j\}$, then $E_{T_1}, \ldots , E_{T_6}$ are the six irreducible components of the three reducible fibres of the pencil $X_j$ hence by Zariski's lemma  these six irreducible curves generate a subgroup of rank 4 in $\Pic(Y)$. Moreover, for any $T_7 \neq T_1, \ldots, T_6$, we have $\rk \langle E_{T_1}, \ldots , E_{T_7} \rangle = 5$ since $T_7$ has non zero intersection with the fibre of $X_j$.

ii) If there is a $\ j \in \{1,2,3,4,5\}$ s.th. $j \in T_1, T_2, T_3, T_4$, then $E_{T_1}, E_{T_2}, E_{T_3}, E_{T_4}$ are pairwise disjoint and linearly independent in $\Pic(Y)$. If $T_5 \neq T_1, T_2, T_3, T_4$, then $\rk \langle E_{T_1}, \ldots , E_{T_5} \rangle =5$, since the intersection matrix
$$
\begin{pmatrix}
-1&0&0&0&1 \\
0&-1&0&0&1 \\
0&0&-1&0&1 \\
0&0&0&-1&1 \\
1&1&1&1&-1 
\end{pmatrix}
$$
has rank 5.

iii) If $T_1,T_2,T_3, T_4 \subset  \{1,2,3,4,5\} \setminus \{j\}$, then we have two cases:
\begin{itemize}
\item[a)]  $\rk \langle E_{T_1}, \ldots , E_{T_4} \rangle =3$ $\iff$ none of the four curves $E_{T_i}$ is disjoint from the others $\iff$ 
we have the components of two reducible fibres of the pencil $X_j$
$\iff$ for each $T = \{i_1,i_2\} \in \{T_1, T_2, T_3, T_4\}$, also $T' = \{1,2,3,4,5\} \setminus \{j, i_1, i_2\} \in \{T_1, T_2, T_3, T_4\}$;
\item[b)]  $\rk \langle E_{T_1}, \ldots , E_{T_4} \rangle =4$ $\iff$ there is an $i \in  \{1,2,3,4\}$ (equivalently, there are two such indices $i$)
such that  $E_{T_i}$ is disjoint from the others.
\end{itemize}
In case b): if $T_5$ is not a component of a reducible fibre of the pencil $X_j$, then $\rk \langle E_{T_1}, \ldots , E_{T_5} \rangle =5$.
\end{remark}

Observe that i) shows one direction of (1).
Assume now that there are $T_1, \ldots , T_6 \subset \{1,2,3,4,5\}$ such that $\rk \langle E_{T_1}, \ldots , E_{T_6} \rangle < 5$: then by ii) we know that for each  $j \in \{1,2,3,4,5\}$,  $j$ is not contained in four of the $T_i$' s. We can clearly assume that we are not in case i), hence for each $j \in \{1,2,3,4,5\}$ there is an $ i \in \{1,2,3,4,5,6\}$ such that $j \in T_i$.
An easy counting argument shows that there is a $ j \in \{1,2,3,4,5\}$ such that $j \in T_{i_1}, T_{i_2}, T_{i_3}$,
for three distinct such $T_i$'s.

Without loss of generality we can assume that $j=1$ and
$$
T_1 = \{1,2\}, \ T_2 = \{1,3\}, \ T_3 = \{1,4\}.
$$
Moreover, since there is an $i$ such that $5 \in T_i$, we can assume that $T_4 = \{2,5\}$. Observe now that $\rk \langle E_{T_1}, \ldots , E_{T_4} \rangle =4$ and we have a section $T_3$ of $X_4$, one reducible fibre $T_2 + T_4$, and anothere component of a reducible fibre.

If  $T_5$ is a component of the third reducible fibre ($T_5 = \{ 1,5\}$ or $T_5 = \{ 2,3\}$), then we get $\rk \langle E_{T_1}, \ldots , E_{T_5} \rangle = 5$,  whereas if $T_5 = \{ 3,5\}$ the rank remains $4$, similarly if  $T_5 = \{ 4,5\}$ since then we have two reducible fibres of the pencil $X_3$.

  We get $\rk \langle E_{T_1}, \ldots , E_{T_5} \rangle = 5$ for $T_5 = \{ 2,4 \}$ (consider the pencil $X_3$), and for  $T_5 = \{ 3,4 \}$ 
  (intersecting  with
  $E_{12}$ and $E_{34}$ we see that if $E_{24}$ were linearly dependent of the other four, we would have 
  $$ E_{24} = - E_{12 }  + 2 E_{14} + a E_{13} + b  E_{25}, $$
  contradicting that the intersection number of  $E_{24}$ with a fibre of $X_4$ is $1$.

The conclusion is that  $\rk \langle E_{T_1}, \ldots , E_{T_6} \rangle = 5$
unless there is $j \notin T_i \ \forall i=1, \dots, 6$  or unless, up to symmetry, we have the six curves 
$$
T_1 = \{1,2\}, \ T_2 = \{1,3\}, \ T_3 = \{1,4\}, T_4 = \{2,5\}, \ T_5 = \{3,5\}, \ T_6 = \{4,5\}.
$$

This shows (1) and (3).
 
In order to show (2), observe that by ii)  each  $j \in \{1,2,3,4,5\}$   is  contained in at most three of the $T_i$'s,
and it at least one, else we have the reducible fibres of a pencil $X_j$, and we are done. 
We can therefore either assume that we are again in the situation above
$$
T_1 = \{1,2\}, \ T_2 = \{1,3\}, \ T_3 = \{1,4\} , \ T_4 = \{2,5\},
$$
or each $j$ belongs to exactly two $T_i$' s.

In the former case  $\rk \langle E_{T_1}, \ldots , E_{T_5} \rangle =5$ for any choice of $T_5$ different from $\{3,5\}, \{4,5\}$, 
or we have two reducible fibres of the pencil $X_4$, respectively of the pencil $X_3$, which is our
assertion.

 In the latter case
without loss of generality we can assume that 
$$
T_1 = \{1,2\}, \ T_2 = \{2,3\}, \ T_3 = \{3,4\}, \ T_4 = \{4,5\}, \ T_5 = \{5,1\},
$$
i.e., we have a pentagon and the intersection matrix has rank $5$.

\end{proof}

Consider
$$
D \colon = \bigcup_{\{i,j\} \subset  \{1,2,3,4,5\}} E_{ij} \subset Y
$$
and let $e_1, e_2, e_3, e_4, e_5$ be a basis of $(\ZZ / n \ZZ)^5$.

\begin{definition}
Given a normal complex space $Y$ and a closed analytic subspace  $D$ containing $Sing (Y)$,
the {\em  Kummer covering of exponent $n$} of $Y$ branched on $D$
(also called the {\em maximal Abelian covering of exponent $n$} of $Y$ branched on $D$) is the
ramified locally  finite covering  $\pi : X \ra Y$ with $X$  a  normal analytic space 
such that the   restriction of $\pi$  to $ Y \setminus D$ is the Galois unramified covering
  associated to the monodromy homomorphism
  
  $$
\varphi \colon H_1(Y \setminus D, \ZZ) \rightarrow H_1(Y \setminus D, \ZZ) \otimes \ZZ/n \ZZ.
$$
 In the case where $Y$ is the Del Pezzo surface of degree 5, and $D$ is the union of the 10 lines of $Y$,
 we shall speak of the {\em Hirzebruch-Kummer covering of exponent $n$ associated to the complete quadrangle.}

 In this special case
$$
\varphi \colon H_1(Y \setminus D, \ZZ) \rightarrow H_1(Y \setminus D, \ZZ) \otimes \ZZ/n \ZZ \cong H_1(Y \setminus D, \ZZ / n \ZZ)  \cong (\ZZ / n \ZZ)^5, 
$$

Observe that $H_1(Y \setminus D, \ZZ) \cong \ZZ^5$ is generated by 10 elements  $\epsilon_{ij} =  \epsilon_{ji}$
where $\epsilon_{ij}$ is the class of a small loop around the line $E_{ij}$.

 The relations satisfied by these generators are the linear combinations of the $\mathfrak S_5$-transforms of the relation
 \begin{equation}\label{relation}
 \epsilon_{12}  = \epsilon_{34} + \epsilon_{35} +\epsilon_{45}.
 \end{equation}

In terms of these generators, the homomorphism $\varphi$ is concretely 
given by 
$$
\begin{array}{lll}
 \epsilon_{13} \mapsto e_5, & \epsilon_{14} \mapsto e_1, & \epsilon_{23} \mapsto e_4,\\
 \epsilon_{24} \mapsto e_2, & \epsilon_{34} \mapsto e_3, \\
\end{array}
$$

\end{definition}

\begin{remark}
Then 
\begin{itemize}
\item $\epsilon_{12} \mapsto -(e_1 + \ldots + e_5)$;
\item $\epsilon_{45} \mapsto  -(e_1 + e_2 + e_3)$;
\item $\epsilon_{15} \mapsto  e_2 + e_3 + e_4$;
\item $\epsilon_{25} \mapsto e_1 + e_3 + e_5$;
\item $\epsilon_{35} \mapsto -(e_3 + e_4 + e_5)$.
\end{itemize}
\end{remark}

\begin{remark}
For $\sigma \in (\ZZ / n \ZZ)^5$  denote by $D_{\sigma}$ the union of the components of $D$ having $ \sigma$ as local monodromy.

In our case  either $D_{\sigma} = 0$ or $D_{\sigma} $ is irreducible, and consists of the unique curve $E_{ij}$
such that $\varphi (\e_{ij}) = \s$.
\end{remark}

 \begin{remark}
The Hirzebruch-Kummer covering of exponent $n$ associated to the complete quadrangle shall here  for brevity be referred to
as the $HK(n)$-surface: it has a group of automorphisms which is the semidirect group of $G : =  (\ZZ / n \ZZ)^5$ with $\mathfrak S_5$.

In the case $n=2$ we obtain a K3 surface, which was investigated by van Geemen, van Luijk and others \cite{vanvan};
the case $n=3$ was investigated by Roulleau in \cite{roulleau}.
\end{remark}

We shall prove now the following:
\begin{proposition}\label{rankcond}
Consider for $n \geq 4$,  the  Kummer covering of exponent $n$ of $Y$ branched in $D$. Let $\psi \in ((\ZZ / n \ZZ)^5)^*$ be a character of $G:=  (\ZZ / n \ZZ)^5$. 

1) If $n \neq 4,6$, then
$$
\rk\{D_{\sigma} : \psi(\sigma) \neq -1\} <5 \iff \{D_{\sigma} : \psi(\sigma) \neq -1\} =\{A, B_1, B_2,B_3\},
$$
where $A, B_i$ are irreducible components of $D$ with $AB_i = 1$, $B_iB_j =0$ for $i \neq j$.

2) If $n=6$, then $\rk\{D_{\sigma} : \psi(\sigma) \neq -1\} <5$ if and only if 
\begin{itemize}
\item $\{D_{\sigma} : \psi(\sigma) \neq -1\} =\{A, B_1, B_2,B_3\}$, where $A, B_i$ are irreducible components of $D$ with $AB_i = 1$, $B_iB_j =0$ for $i \neq j$, or 
\item $\{D_{\sigma} : \psi(\sigma) \neq -1\}=\{E_{ij} : j \in \{1,2,3,4,5\} \setminus \{i\}\}$, for some $i$.
\end{itemize}
3) If $n=4$, then $\rk\{D_{\sigma} : \psi(\sigma) \neq -1\} <5$ if and only if one of the following  conditions is satisfied:
\begin{itemize}
\item $\{D_{\sigma} : \psi(\sigma) \neq -1\}=\{E_{T_1}, \ldots , E_{T_5}\}$, where  there is a $j$ such that $T_i \in \{1, \ldots, 5\} \setminus \{j\}$;
\item $\{D_{\sigma} : \psi(\sigma) \neq -1\}=\{E_{T_1},E_{T_2}  , E_{T_3}\}$, where there are $i,j$ such that $T_1, T_2, T_3 \subset \{1, \ldots, 5\} \setminus \{i,j\}$.
\end{itemize}
\end{proposition}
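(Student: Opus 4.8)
The plan is to reduce the statement to the purely combinatorial rank computation of Proposition~\ref{dependencies} and to read off the arithmetic conditions on $n$ from the explicit monodromy $\varphi$, using the $\mathfrak{S}_5$-symmetry to keep the case analysis finite. Write $f(E_{ij}) := \psi(\varphi(\epsilon_{ij})) \in \ZZ/n\ZZ$, so that $S_\psi := \{D_\sigma : \psi(\sigma) \neq -1\}$ is the set of lines $E_{ij}$ with $f(E_{ij}) \neq -1$ and $Z_\psi$ is its complement, the set of lines with $f = -1$. The relations of $H_1(Y\setminus D,\ZZ)$, i.e. the $\mathfrak{S}_5$-orbit of (\ref{relation}), translate into the identities $f(E_{ab}) = f(E_{cd}) + f(E_{ce}) + f(E_{de})$ for $\{c,d,e\} = \{1,\dots,5\}\setminus\{a,b\}$ (the ``opposite triangle'' of $E_{ab}$), and into the consequences $\sum_{j \neq c} f(E_{cj}) = 0$ for each index $c$ (the ``star'' identities). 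These identities are what force the thresholds $n \mid 4$ and $n \mid 6$.

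The implication ($\Leftarrow$) is immediate from Proposition~\ref{dependencies}: the four-line configuration $\{A,B_1,B_2,B_3\}$, the three-line configuration and the ``four lines through one index'' each have rank $\le 4$ trivially, the five-line configuration by part~(2), and the six-line configurations by part~(1). So the work is in ($\Rightarrow$): assuming $\rk S_\psi < 5$, show that $S_\psi$ has one of the listed shapes, and determine for which $n$ each shape can occur.

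First, Proposition~\ref{dependencies}(3) gives $|S_\psi| \le 6$. I then argue by $|S_\psi|$. If $|S_\psi| = 6$, then by \ref{dependencies}(1) $S_\psi$ is either a star (the six lines meeting a pair $\{i,j\}$ in exactly one index) or a pencil (the six lines avoiding an index $j$). In the star case $Z_\psi$ consists of a line $A$ together with its opposite triangle $B_1,B_2,B_3$, so the opposite-triangle identity forces $f(A) = -3 = -1$, impossible for $n > 2$; in the pencil case $Z_\psi$ contains the four lines through $j$, and evaluating $\varphi$ forces $-3 = 1$, i.e. $n \mid 4$, after which a direct check on $\ZZ/4\ZZ$ shows a seventh value is also $-1$. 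Hence $|S_\psi| = 6$ never occurs. If $|S_\psi| = 5$, then by \ref{dependencies}(2) it is a two-reducible-fibres configuration; evaluating $\varphi$ shows that the variant with a section through $j$ needs $n \mid 2$ and is thus excluded, so all five lines avoid a common index $j$ and $n \mid 4$, i.e. $n = 4$ --- the first bullet of~(3). Finally, if $|S_\psi| \le 4$ the rank is automatically $\le 4$, and the content becomes deciding which $\mathfrak{S}_5$-orbits of small subsets are actually realised as $S_\psi$: feeding ``$f \equiv -1$ on $Z_\psi$'' into the star identities yields a linear system for the values of $f$ on $S_\psi$, which for most orbits forces some line of $S_\psi$ to take the value $-1$ (for a four-edge path, say, one gets $f = 3$ on the first edge and then $f = -1$ on the second), contradicting $E \in S_\psi$. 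The survivors are the triangle-plus-disjoint-edge configuration $\{A,B_1,B_2,B_3\}$, whose forced values are $(3,1,1,1)$ --- admissible exactly when $3 \neq -1$, i.e. $n \neq 4$ --- and the star $K_{1,4}$ of four lines through one index, which the full monodromy (not merely the star identities) shows to be realisable precisely when $n \mid 6$, i.e. $n = 6$. The residual case $|S_\psi| \le 3$ is handled the same way and always forces $n \mid 4$, giving, for $n = 4$, the three-line configuration of~(3); note also that at $n = 4$ the apex $A$ of $\{A,B_1,B_2,B_3\}$ acquires value $3 = -1$, so that configuration degenerates to this three-line one.

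Collecting the cases yields the statement: for $n \neq 4,6$ only $\{A,B_1,B_2,B_3\}$ survives, for $n = 6$ the configuration $K_{1,4}$ appears in addition, and for $n = 4$ the five-line and three-line configurations appear. The main obstacle is the completeness and bookkeeping of this finite analysis, and in particular the subtlety that the star identities are only necessary conditions (they cannot see $2$-torsion when $n$ is even), so the precise thresholds --- $n \mid 4$ versus $n \mid 6$ versus outright infeasibility --- must be extracted from the explicit values of $\varphi$ rather than from the star identities alone; the $\mathfrak{S}_5$-action is the tool that makes the orbit enumeration tractable.
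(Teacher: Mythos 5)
Your proposal is correct and follows essentially the same route as the paper's proof: reduce everything to Proposition~\ref{dependencies}, split according to the cardinality of $\{D_{\sigma} : \psi(\sigma) \neq -1\}$ (six, five, or at most four elements), and evaluate $\psi\circ\varphi$ explicitly on the ten lines, exploiting the $\mathfrak{S}_5$-action, so that the congruences $3\equiv -1$ and $5\equiv -1$ produce exactly the exceptional configurations at $n=4$ and $n=6$. The only difference is bookkeeping: you enumerate $\mathfrak{S}_5$-orbits of the set $S_\psi$ itself (with the ``star identities'' as a shortcut), whereas the paper organizes the case $|S_\psi|\le 4$ by counting how often each index occurs among the pairs in the complement $\{D_{\sigma} : \psi(\sigma) = -1\}$; both amount to the same finite verification.
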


\begin{proof}
1) Let $\psi = (a_1,\ldots,a_5) \in ((\ZZ / n \ZZ)^5)^*$ be a character of $G:=  (\ZZ / n \ZZ)^5$, such that $\rk\{D_{\sigma} : \psi(\sigma) \neq -1\} \leq 4$.  Then by Proposition \ref{dependencies} we know that 
$$
|\{D_{\sigma} : \psi(\sigma) \neq -1\}| \leq 6, 
$$
and
\begin{itemize}
\item[a)] if $|\{D_{\sigma} : \psi(\sigma) \neq -1\}| = 6$, then $\{D_{\sigma} : \psi(\sigma) \neq -1\}=\{E_{T_1}, \ldots , E_{T_6}\}$, where 
either

\begin{itemize}
\item[a.1)]  $T_1, \ldots T_6 \subset \{1,2,3,4,5\} \setminus \{j\}$ for some $j$;
\item[a.2)] there are $1 \leq i < j \leq 5$ such that the $T_h$' s are the subsets of cardinality two with $|T_h \cap \{i,j\}|=1$
\end{itemize}

\item[b)] if $|\{D_{\sigma} : \psi(\sigma) \neq -1\}| = 5$, then $\{D_{\sigma} : \psi(\sigma) \neq -1\}=\{E_{T_1}, \ldots , E_{T_5}\}$, where there is a $j$ such that 
(after possibly renumbering the $T_h$'s) $$T_1 \cup T_2 =T_3 \cup T_4 = \{1, \ldots, 5\} \setminus \{j\}.$$
\end{itemize}
We consider the following three cases:
\begin{itemize}
\item[I)] $|\{D_{\sigma} : \psi(\sigma) \neq -1\}| \leq 4$;
\item[II)] $|\{D_{\sigma} : \psi(\sigma) \neq -1\}| = 5$;
\item[III)]  $|\{D_{\sigma} : \psi(\sigma) \neq -1\}| = 6$.
\end{itemize}

{\bf I)} Assume that $\{D_{\sigma} : \psi(\sigma) = -1\} = \{E_{T_1}, \ldots, E_{T_k}\}$ with $k \geq 6$. Then we distinguish two cases:

\underline{Case 1:} there is a $j \in \{1, \ldots, 5\}$ such that $T_1, \ldots , T_6 \subset \{1, \ldots, 5\} \setminus \{j\}$. Without loss of generality we can assume that $j=5$, hence 
$$
\{E_{T_1}, \ldots, E_{T_6}\} = \{E_{12}, E_{13}, E_{14}, E_{23}, E_{24}, E_{34}\}.
$$
$\psi(E_{T_i}) = -1$ implies that $a_1 = \ldots = a_5 = -1$, whence 
$$
-1 = \psi(E_{12}) = -(a_1 + \ldots +a_5) =5,
$$
and this leads to a contradiction for $n\neq 6$.

\underline{Case 2:} for each $j \in \{1, \ldots, 5\}$ there is an $i \in \{1, \ldots, 6\}$ such that $j \in T_i$. 

2-1) Assume here  that there is a $j$ which is contained in four $T_i$'s; then, without loss of generality, we can assume $j=1$ and $T_1 = \{1,2\}$, $T_2 = \{1,3\}$, $T_3 = \{1,4\}$, $T_4 = \{1,5\}$. But then $\psi(E_{T_i})=-1$ implies $a_5=a_1=a_2+a_3+a_4 =-1$ and we get for $n\neq 4$ a contradiction since
$$
-1 = \psi(E_{12}) = -(a_1 + \ldots +a_5) = 3.
$$

2-2) Assume that each $j$ belongs to at least one and at most three $T_h$'s. 
Therefore, again by an easy counting argument, we see that there is a $ j \in \{1,2,3,4,5\}$ such that $j \in T_{i_1}, T_{i_2}, T_{i_3}$. Without loss of generality we may assume that $j=3$ and $T_1 = \{1,3\}$, $T_2 = \{2,3\}$, $T_3 = \{3,4\}$ and since there is a $k$ such that $5 \in T_k$ we may assume that $T_4 = \{4,5\}$. Then $\psi(E_{T_i})=-1$ for $i=1,2,3,4$ implies $a_5=a_4=a_3=-(a_1+a_2+a_3) =-1$. Using now that $[a_1+a_2]=[1-a_3] =2$, hence $a_2 =[2-a_1]$ we obtain:
$$
\begin{array}{ll}
 \psi(E_{12}) = [-(a_1 + \ldots +a_5)] =  1 \neq -1, & \psi(E_{14}) =a_1,\\
\psi(E_{15}) = [a_2+a_3+a_4] = [-a_1], & \psi(E_{24}) = a_2 = [2-a_1], \\
\psi(E_{25}) = [a_1+a_3+a_5] = [-a_2] =[a_1-2], & \psi(E_{35}) = [-(a_3+a_4+a_5)] = 3.\\
\end{array}
$$
In order that two of the above five values are equal to $-1$ (the first is never  $-1$), since we have two pairs of opposite values,
the only possibility is that $a_1= \pm1$.  Wlog we may assume $a_1 = 1$. Then $\psi(E_{15}) = \psi(E_{25}) =-1$, $\psi(E_{14}) = \psi(E_{24}) =1$ and
when $n \neq 4$ 
$$
\{D_{\sigma} : \psi(\sigma) \neq -1\} = \{E_{12},E_{14},E_{24},E_{35}\},
$$
which is exactly the situation described in 1).

{\bf II)} $\{D_{\sigma} : \psi(\sigma) \neq -1\} = \{E_{T_1}, \ldots, E_{T_5}\}$ and we assume $\rk \langle E_{T_1}, \ldots , E_{T_5} \rangle <5$. Then four of the $E_{T_i}$'s have to be as in  Remark \ref{remarks} iii), a), yielding two reducible fibres, and we can assume $T_1 = \{1,2\}$, $T_2 = \{4,5\}$, $T_3 = \{1,4\}$, $T_4 = \{2,5\}$. We have then two possibilities,
according to a section or a  component of the third reducible fibre:

a) $T_5 = \{3,5\}$, or

b)  $T_5 = \{1,5\}$.
In both cases $\psi(E_T) =-1$ for $T \in \{\{1,3\},\{2,3\},\{2,4\},\{3,4\}\}$  implies that $a_5 = a_4 = a_2=a_3 =-1$. This  shows that a) is impossible, since  $\psi(E_{15}) = [a_2+a_3+a_4] = -3\neq -1$; for $n\neq 4$ also b) is impossible, since $-1 = \psi(E_{35}) = [-(a_3+a_4+a_5)] = 3\neq -1$.

{\bf III)}  $\{D_{\sigma} : \psi(\sigma) \neq -1\} = \{E_{T_1}, \ldots, E_{T_6}\}$ and we assume $\rk \langle E_{T_1}, \ldots , E_{T_6} \rangle <5$. Therefore by Proposition \ref{dependencies}, (1), we may assume that either (up to $\mathfrak S_5$-symmetry)
we have
$$
\{D_{\sigma} : \psi(\sigma) = -1\}=\{E_{23}, E_{34},E_{24},E_{15}\},
$$
or
$$
\{D_{\sigma} : \psi(\sigma) = -1\}=\{E_{13}, E_{34},E_{23},E_{35}\}.
$$
The first alternative cannot occur, since then $a_2 = a_3 = a_4 = -1$ hence $a_2 +  a_3 + a_4 = -3 \neq -1$.
The second alternative can only occur for $n = 4$, since we would have $-1 = a_5 = a_3=a_4 = -(a_3+a_4+a_5) = 3$.

Hence assertion  1) is proven.

2) For $n=6$  the only additional case is I), case 1: here $\psi =(5,5,5,5,5)$ and we get $\{D_{\sigma} : \psi(\sigma) =  -1\}=\{E_{ij} : j \in \{1,2,3,4,5\} \setminus \{5\}\}$. This proves 2).

3)  For $n=4$ we need to treat several possibilities:
\begin{itemize}
\item
I) Case 2-1
\item
I) Case 2-2
\item
II)  possibility b
\item
III) second alternative.

\end{itemize}

\underline{I) Case 2-1:}  $T_1 = \{1,2\}$, $T_2 = \{1,3\}$, $T_3 = \{1,4\}$, $T_4 = \{1,5\}$. 

 $\psi(E_{T_i})=-1$ implies $a_5=a_1=a_2+a_3+a_4 =-1$ and  $-1 = \psi(E_{12}) = -(a_1 + \ldots +a_5) = 3$. Then we get the values for the components of the 3 reducible fibres:
\begin{itemize}
\item $\psi(\e_{25}) = -2+a_3$,  $\psi(\e_{34}) = a_3$,
\item $\psi(\e_{35}) = -2+a_2$,  $\psi(\e_{24}) = a_2$,
\item $\psi(\e_{45}) = -2+a_4$,  $\psi(\e_{23}) = a_4$.
\end{itemize}

Since  two of the above values must be $= -1$,
we can wlog assume that $a_2=a_3=1$,  hence also $a_4 =1$ and we obtain the second exceptional case in the statement 3)
$$
\{D_{\sigma} : \psi(\sigma) \neq -1\}= \{E_{23}, E_{24}, E_{34} \}.
$$

\underline{I) Case 2-2:}  the previous analysis shows that for $n=4$
$$
\{D_{\sigma} : \psi(\sigma) \neq -1\} = \{E_{12},E_{14},E_{24}\}.
$$

\underline{II) b:} here
$\{D_{\sigma} : \psi(\sigma) = -1\}=\{E_{13}, E_{23},E_{24},E_{34}, E_{35}\}$.

 $\psi(E_T) =-1$ for $T \in \{\{1,3\},\{2,3\},\{2,4\},\{3,4\}\}$  implies that $a_5 = a_4 = a_2=a_3 =-1$, then we have also $\psi(E_{35}) = [-(a_3+a_4+a_5)] = 3 = -1$. Since we assume that $|\{D_{\sigma} : \psi(\sigma) = -1\}| = 5$, and the other values are $1, a_1, 2-a_1, a_1-2, -a_1$ 

 we get 
$\psi=(a_1,-1,-1,-1,-1)$ with $a_1 = 0,2$. In both cases 
$$
\{D_{\sigma} : \psi(\sigma) \neq -1\} = \{E_{12}, E_{14}, E_{15}, E_{25},  E_{45}\}.
$$

\underline{III) second alternative:}   
$$
\{D_{\sigma} : \psi(\sigma) = -1\}=\{E_{13}, E_{34},E_{23},E_{35}\}.
$$
We show now that this cannot occur, since then we would first of all
have  $a_5 = a_3=a_4 = -(a_3+a_4+a_5) = -1$. 

Since moreover we assume that $|\{D_{\sigma} : \psi(\sigma) = -1\}| = 4$) and the other six values
are:
$$ a_1, a_1-2, a_2, a_2 -2, 1 - a_1 - a_2, -1 -a_1 - a_2,$$
we obtain that $a_1, a_2 \neq 1, -1$.
Also, we should have $a_1 + a_2 \neq 0,2$, hence we derive a contradiction.

\end{proof}

\section{Invariants of Hirzebruch-Kummer coverings associated to the \protect\linebreak complete  quadrangle}

The next two sections shall lead to the proof of the following main result:

\begin{theorem}\label{main}
Let $\pi \colon S \rightarrow Y$ be the surface $HK(n)$, the Kummer covering of exponent $n\geq 3$ of the Del Pezzo surface $Y$ of degree 5,
 branched on the  divisor $D \in |-2K_Y|$, union of the 10 lines of $Y$ . Then $S$ is a smooth surface of general type with $K_S$ ample, and 
$$
H^1(S, \Theta_S) = 0
$$
for $ n\geq 4$. Hence $S$ is infinitesimally and globally rigid  for $ n\geq 4$.
\end{theorem}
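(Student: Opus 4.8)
The plan is to compute $H^1(S,\Theta_S)$ by pushing forward to the Del Pezzo surface $Y$ and decomposing into character eigenspaces under the action of $G=(\ZZ/n)^5$. Since $\pi\colon S\to Y$ is a smooth abelian cover branched on the normal crossing divisor $D=\bigcup E_{ij}$, the natural object to handle is the sheaf of logarithmic vector fields $\Theta_Y(-\log D)$ together with Pardini's formulae expressing $\pi_*\Theta_S$ as a direct sum of line bundles on $Y$ twisted by the characters $\chi\in G^*$. Concretely, I would first establish the residue/logarithmic exact sequence
\begin{equation*}
0\to \Theta_Y(-\log D)\to \Theta_Y\to \bigoplus_{\{i,j\}}\Oh_{E_{ij}}\to 0,
\end{equation*}
and then use the fact that $\pi^*\Theta_S$ decomposes, via the eigensheaf decomposition $\pi_*\Oh_S=\bigoplus_{\chi}L_\chi^{-1}$, so that computing $H^1(S,\Theta_S)=\bigoplus_\chi H^1(Y,(\text{appropriate twist})\otimes L_\chi^{-1})$. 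The smoothness of $S$, ampleness of $K_S$, and the fact that $D\in|-2K_Y|$ should follow from standard covering theory plus the intersection combinatorics recorded in the previous section, so the real content is the vanishing.

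The key reduction is that each character $\chi$ contributes a term of the form $H^1(Y,\Theta_Y(-\log D)\otimes L_\chi^{-1})$ (or its variant involving the residue terms), and one wants to show this vanishes for every $\chi$ when $n\geq 4$. Here is where Proposition \ref{rankcond} enters decisively: by Serre duality and the structure of $L_\chi$, the obstruction to vanishing is controlled by how many of the branch components $E_{ij}$ have $\chi(\sigma_{ij})\neq -1$ and by the rank of the sublattice they span in $\Pic(Y)\cong\ZZ^5$. Proposition \ref{rankcond} precisely classifies the characters for which this rank drops below $5$, so the strategy is to isolate those finitely many exceptional character-types and check by hand that even for them the relevant $H^1$ still vanishes (the exceptional configurations $\{A,B_1,B_2,B_3\}$ with $AB_i=1$, $B_iB_j=0$ being the generic culprit). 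I would organize the computation by first disposing of the trivial character $\chi=0$ (where one needs $H^1(Y,\Theta_Y(-\log D))=0$, i.e. rigidity of the pair $(Y,D)$, which is classical since $Y$ is a del Pezzo of degree $5$ and $D$ is its anticanonical configuration), and then treating nontrivial $\chi$ grouped by the rank of $\{D_\sigma:\chi(\sigma)\neq-1\}$.

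The heaviest step will be the explicit cohomology computation for each eigenspace: one must twist $\Theta_Y(-\log D)$ by the line bundles $L_\chi^{-1}=\Oh_Y(-\sum_{ij}\tfrac{\langle\chi,\epsilon_{ij}\rangle}{n}E_{ij})$ and evaluate $h^1$, using the residue sequence to trade $\Theta_Y(-\log D)$ against $\Theta_Y$ and the normal bundles $\Oh_{E_{ij}}(E_{ij})\cong\Oh_{\PP^1}(-1)$. Because $Y$ carries the $\mathfrak S_5$-action permuting the ten lines, I expect to cut the labor drastically by invoking this symmetry: the characters fall into a small number of $\mathfrak S_5$-orbits, so one only computes one representative per orbit. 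The main obstacle, as the authors themselves flag, is that Pardini's easy vanishing criterion fails here—there are many $\chi$ with $H^1(\Theta_Y(-L_\chi))\neq0$—so the residue/logarithmic terms cannot be ignored and one is forced into a genuinely delicate bookkeeping, exactly the "difficult calculations handled using symmetry" mentioned in the introduction. Once all eigenspaces are shown to vanish, infinitesimal rigidity gives local rigidity by Kodaira--Spencer--Kuranishi, and global rigidity follows from the existence of the Gieseker moduli space for surfaces of general type, as recorded in Theorem \ref{surfaces}(1).
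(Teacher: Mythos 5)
Your outline follows the same road as the paper: eigensheaf decomposition under $G=(\ZZ/n)^5$, Pardini's formulae, logarithmic sheaves and residue sequences, the rank analysis of Proposition \ref{rankcond}, the $\mathfrak S_5$-symmetry, and Gieseker's moduli space for the passage from infinitesimal to global rigidity. (The paper works with the Serre dual, i.e.\ with $\pi_*(\Omega^1_S\otimes\Omega^2_S)^{\psi}=\Omega^1_Y(\log D_\sigma:\psi(\sigma)\neq -1)(K_Y+\sL_\psi)$ rather than with $\Theta_Y(-\log\cdot)\otimes\sL_\psi^{-1}$; these are equivalent. Two small slips: the quotient in your residue sequence for logarithmic vector fields is the normal bundle $\bigoplus\Oh_{E_{ij}}(E_{ij})$, not $\bigoplus\Oh_{E_{ij}}$, and the logarithmic poles occur only along the character-dependent subset $\{D_\sigma:\psi(\sigma)\neq -1\}$, not along all of $D$.)

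The genuine gap is in your central reduction: you assert that Proposition \ref{rankcond} ``precisely classifies'' the problematic characters, so that the full-rank characters vanish automatically and only the finitely many rank-deficient types need a hand check. That is not how the vanishing actually works, and it is where almost all of the paper's labor lies. For each nontrivial $\psi$ one must first compute $K_Y+\sL_\psi$ explicitly in $\Pic(Y)$ (the classification into seventeen types according to the values of $F$, $S$ and the $\lambda_i$), then write $K_Y+\sL_\psi\equiv A-B$ with $A,B$ effective and supported on branch components, and then verify the hypotheses of the general vanishing Theorem \ref{gvt}: the positivity conditions (2),(3) on intersection numbers with the chosen $A$ and $B$, and the rank condition (4), which concerns the set of components with $\psi(\sigma)\neq -1$ that are \emph{orthogonal to $B$} (corrected by the term $R$), not the full set $\{D_\sigma:\psi(\sigma)\neq -1\}$. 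When $B\neq 0$ this is strictly stronger than the rank-$5$ statement of Proposition \ref{rankcond}: for instance in case (1), where $B$ is a reducible fibre plus a section of a conic pencil, the orthogonal set is $\{E_{14},E_{34}\}$ of rank $2$, and one must also show $R=0$. Checking these conditions requires determining, case by case, which values $\psi(E_{ij})=n-1$ are compatible with the constraints on $F$, the $\lambda_i$ and $S$; several subcases are only disposed of by observing that the offending character actually occurs for $n=5$, where the surface is a ball quotient and rigidity is known by Calabi--Vesentini; and $n=4,6$ produce extra exceptional characters that need a separate section. None of this is a routine consequence of the rank classification, so as it stands your plan does not yet constitute a proof of the vanishing.
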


Consider the usual eigensheaf decomposition
$$\pi_* \hol_S \cong \bigoplus_{\psi \in ((\ZZ/n \ZZ)^5)^*}  \mathcal{L}^{-1}_{\psi} .$$
In the remaining part of the section, for each character $\psi \in ((\ZZ / n \ZZ)^5)^*$ we want to calculate the character sheaves $\sL_{\psi}$
(for $\psi = 0$, $\sL_{0} = \hol_Y$).

\begin{remark}
Viewing  $Y \rightarrow \PP^2$ as the blow up of the plane $\PP^2$  in $p_1,p_2,p_3,p_4$, we denote by $L$ the pull back of a line in $\PP^2$ and by $E_i$  (instead of $E_{i5}$) the exceptional curve over $p_i$. In this way we get a standard basis for $\Pic(Y)$.
\end{remark}

We begin with the following lemma, where we
denote by  $[] $ the remainder after  division by $n$, $[]: \ZZ / n \ZZ \rightarrow \{0, \ldots, n-1\}$, $ a \mapsto [a]$.

\begin{lemma}
For each character $\psi = (a_1, \ldots a_5) \in ((\ZZ/n \ZZ)^5)^*$ we have:
\begin{multline}
n \mathcal{L}_{\psi} \equiv F(a_1, \ldots, a_5) L - \lambda_1(a_1, \ldots , a_5) E_1 - \\ 
-\lambda_2(a_1, \ldots , a_5) E_2 - \lambda_3(a_1, \ldots , a_5) E_3- \lambda_4(a_1, \ldots , a_5) E_4 ,
\end{multline}
where
\begin{itemize}
\item $F(a_1, \ldots, a_5) = [a_1] +[a_2]+[a_3]+[a_4]+[a_5]+[-(a_1+ \ldots + a_5)]$,
\item $\lambda_1(a_1, \ldots , a_5) = [a_2]+[a_3]+[a_4] - [a_2+ a_3 + a_4]$,
\item $\lambda_2(a_1, \ldots , a_5) = [a_1]+[a_3]+[a_5] - [a_1+ a_3 + a_5]$,
\item $\lambda_3(a_1, \ldots , a_5) = [a_1]+[a_2]+[-(a_1+ \ldots + a_5)] - [-(a_3+ a_4 + a_5)]$,
\item $\lambda_4(a_1, \ldots , a_5) = [a_4]+[a_5]+[-(a_1+ \ldots + a_5)] - [-(a_1+ a_2 + a_3)]$,
\end{itemize}
\end{lemma}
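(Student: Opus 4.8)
The plan is to apply Pardini's eigensheaf formula for abelian covers and then to perform the substitution of the explicit monodromy data recorded above. Recall that for a normal abelian cover $\pi \colon S \to Y$ with group $G$, branched over a reduced divisor $D = \sum_l D_l$ whose component $D_l$ carries local monodromy $\sigma_l \in G$, Pardini's building-data formula asserts that, for each character $\psi \in G^*$, the character sheaf occurring in $\pi_* \hol_S = \bigoplus_\psi \mathcal{L}_\psi^{-1}$ satisfies
\[ n\,\mathcal{L}_\psi \equiv \sum_l [\psi(\sigma_l)]\, D_l \qquad \text{in } \Pic(Y), \]
with $[\,\cdot\,]\colon \ZZ/n\ZZ \to \{0,\dots,n-1\}$ the chosen representative. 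This is the exact analogue of the cyclic case, where the $i$-th eigensheaf of a $\ZZ/n\ZZ$-cover with $nL \equiv \sum_l m_l D_l$ is $L^{(i)} = iL - \sum_l \lfloor i m_l/n\rfloor D_l$, so that $nL^{(i)} \equiv \sum_l [i m_l]\,D_l$; the normalized representative $[\,\cdot\,]$ is precisely what produces the \emph{normalized} cover. In our case $D = \bigcup E_{ij}$, the components are the ten lines, and $\sigma_{ij} = \varphi(\epsilon_{ij})$ is the monodromy given in the definition and the subsequent remark.

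With this formula in hand, the first concrete step is to evaluate $[\psi(\sigma_{ij})]$ for $\psi = (a_1,\dots,a_5)$ using $\psi(e_k) = a_k$. For the five lines $E_{13}, E_{14}, E_{23}, E_{24}, E_{34}$ one reads off $a_5, a_1, a_4, a_2, a_3$ respectively, while the remark supplies $\psi(\sigma_{12}) = -(a_1 + \cdots + a_5)$, $\psi(\sigma_{15}) = a_2 + a_3 + a_4$, $\psi(\sigma_{25}) = a_1 + a_3 + a_5$, $\psi(\sigma_{35}) = -(a_3 + a_4 + a_5)$, and $\psi(\sigma_{45}) = -(a_1 + a_2 + a_3)$ for the remaining five.

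The second step is to rewrite the ten lines in the standard basis $L, E_1, E_2, E_3, E_4$. One has $E_{i5} = E_i$ for $i \in \{1,2,3,4\}$, and $E_{ij} = L - E_h - E_k$ whenever $\{i,j,h,k\} = \{1,2,3,4\}$. Substituting into $n\,\mathcal{L}_\psi = \sum_{ij}[\psi(\sigma_{ij})]\,E_{ij}$ and collecting terms is then pure bookkeeping: the coefficient of $L$ is the sum of $[\psi(\sigma_{ij})]$ over the six lines $E_{ij}$ with $i,j \le 4$, which is exactly $F$; and the coefficient of $E_m$ (for $m \le 4$) is $[\psi(\sigma_{m5})]$ minus the three values $[\psi(\sigma_{ij})]$ attached to the lines $E_{ij}$, $i,j\le 4$, that contain $-E_m$. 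These three collapse into the stated $\lambda_m$. For example $E_1$ occurs in $E_{23}, E_{24}, E_{34}$, so its coefficient is $[a_2+a_3+a_4] - [a_4] - [a_2] - [a_3] = -\lambda_1$, and the remaining three $\lambda_m$ come out by the same pattern.

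I expect no serious obstacle: once Pardini's formula is invoked, the content is a finite, if slightly delicate, index computation. The only points requiring care are matching the sign and inverse conventions in the eigensheaf decomposition so that $\mathcal{L}_\psi$ is the sheaf carrying the non-negative coefficients $[\psi(\sigma_{ij})] \ge 0$, and keeping straight which of the six lines $E_{ij}$ with $i,j\le 4$ contains each $E_m$. Integrality of $\mathcal{L}_\psi$ — that is, divisibility of the right-hand side by $n$ — need not be verified separately, since it is guaranteed by the existence of the normalized cover in Pardini's construction.
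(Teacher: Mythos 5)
Your proposal is correct and follows essentially the same route as the paper: the authors likewise invoke the formula $n\mathcal{L}_{\psi} \equiv \sum_{i<j}[\psi(\varphi(\epsilon_{ij}))]E_{ij}$ (citing the computation in the volume-maximizing-surface paper, i.e.\ Pardini's building data) and then rewrite the ten lines in the basis $L, E_1,\dots,E_4$. Your bookkeeping of which $E_{ij}$ contain each $-E_m$ and the resulting identification of the coefficients with $F$ and $-\lambda_m$ checks out exactly.
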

\begin{proof}
As in \cite{volmax}, p. 392, we use the formula 
$$
n \mathcal{L}_{\chi} \equiv \sum_{1 \leq i<j\leq5} \psi(\varphi(\e_{ij})) E_{ij}.
$$
The claim follows from writing the right hand side in terms of the basis $L, E_1,E_2,E_3,E_4$ of $\Pic(Y)$.

\end{proof}

\begin{remark} We have the following:
\begin{enumerate}
\item $F(a_1, \ldots, a_5) \equiv 0 \mod n$,  $0 \leq F(a_1, \ldots, a_5) \leq 5n$,  
\item $F(a_1, \ldots, a_5) = 0 \iff (a_1, \ldots, a_5) = 0$,
\item $\lambda_i(a_1, \ldots, a_5) \equiv 0 \mod n$, $0 \leq \lambda_i(a_1, \ldots, a_5) \leq 2n$, 
\item $ \lambda_1 +\lambda_2 +\lambda_3 +\lambda_4 = 2F(a_1, \ldots, a_5) - S(a_1, \ldots, a_5)$,
\end{enumerate}
where 
$$
S(a_1, \ldots, a_5) \colon = [-(a_1+ a_2 + a_3)]+[a_2+ a_3 + a_4] + [a_1+ a_3 + a_5] +  [-(a_3+ a_4 + a_5)].
$$
Then $S(a_1, \ldots, a_5) \equiv 0 \mod n$,  $0 \leq S(a_1, \ldots, a_5) \leq 3n$.

\end{remark}
\begin{remark}\label{lambda2n}
If for $\psi = (a_1, \ldots a_5)$ we have $\lambda_i (a_1, \ldots a_5)= 2n$, then $\psi(E_i) \neq n-1$.
\end{remark}
\begin{proposition}
Let $S$ be the $HK(n)$ surface, and let $\pi \colon S \rightarrow Y$ be the  Kummer covering of exponent $n$ of $Y$ branched in $D$, where $n\geq 3$. Then $S$ is a smooth surface of general type with $K_S$ ample.

Moreover, we have:

$$
K_S^2 = 5(n-2)^2n^3, \ \ e(S) = n^3(2n^2-10n+15).
$$
\end{proposition}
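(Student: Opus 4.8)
The plan is to treat the four assertions—smoothness, ampleness of $K_S$ (hence general type), and the two numerical formulas—separately, with smoothness as the genuine geometric input and the rest as bookkeeping built on Pardini's abelian-cover machinery \cite{pardini} (compare the analogous computation for $n=5$ in \cite{volmax}).

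First I would check that $D$ is a simple normal crossing divisor on $Y$. By the intersection table, two lines $E_{ij}, E_{hk}$ meet transversally in a single point exactly when $\{i,j\}\cap\{h,k\}=\emptyset$, and a line through such a node would have to be disjoint from both, i.e. its index pair would lie in $\{1,2,3,4,5\}\setminus(\{i,j\}\cup\{h,k\})$, a set of one element—impossible. Hence the $10$ lines meet only in ordinary nodes, $15$ of them (one for each unordered pair of disjoint $2$-subsets). At each node $E_{ij}\cap E_{hk}$ the two local monodromies $\varphi(\e_{ij}),\varphi(\e_{hk})$ each have order $n$ and are independent in $G=(\ZZ/n)^5$, as is immediate from the explicit values of $\varphi$; so the inertia group is $\iso(\ZZ/n)^2$. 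By Pardini's smoothness criterion the cover is then locally $u^n=x$ over a smooth point of $D$ and $u^n=x,\,v^n=y$ over a node, both smooth, so $S$ is smooth.

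For the canonical class I would invoke the ramification formula $K_S\equiv\pi^*\bigl(K_Y+\tfrac{n-1}{n}D\bigr)$, valid because every branch component has ramification index exactly $n$. Since $D\in|-2K_Y|$, the bracketed $\QQ$-divisor equals $\tfrac{n-2}{n}(-K_Y)$, which is ample for $n\geq 3$ because $-K_Y$ is ample on the degree-$5$ Del Pezzo surface. As the pullback of an ample class under the finite map $\pi$, $K_S$ is ample, so $S$ is a minimal surface of general type. Taking self-intersections with $\deg\pi=n^5$ and $K_Y^2=5$ gives $K_S^2=n^5\cdot\tfrac{(n-2)^2}{n^2}\cdot 5=5(n-2)^2n^3$. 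For $e(S)$ I would stratify $Y$ and use multiplicativity of the Euler number on the induced étale covers of the strata: degree $n^5$ over $Y\setminus D$; degree $n^4$ over the smooth locus $E_{ij}^0$ of each line (a $\PP^1$ minus its $3$ nodes, so $e=-1$), since the inertia there is cyclic of order $n$; and $n^3$ points over each of the $15$ nodes. With $e(Y)=7$ and $e(D)=5$, hence $e(Y\setminus D)=2$, this assembles to $e(S)=2n^5-10n^4+15n^3=n^3(2n^2-10n+15)$.

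The main obstacle is the smoothness step: it is the only place where one must genuinely use the geometry of the configuration and the specific monodromy homomorphism, namely that $D$ has no triple points and that the two inertia generators at every node are independent (equivalently, all $15$ inertia groups are $\iso(\ZZ/n)^2$). Once this is in hand, the clean local models justify both the ramification index $n$ used in the canonical-class computation and the exact fibre cardinalities $n^5,n^4,n^3$ used in the Euler-number count, and the two formulas follow by routine arithmetic.
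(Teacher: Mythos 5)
Your proposal is correct and follows essentially the same route as the paper: the ramification formula $nK_S=\pi^*(nK_Y+(n-1)D)=-(n-2)\pi^*K_Y$ for $K_S^2$ and ampleness, and the stratification of $Y$ into $Y\setminus D$, the thrice-punctured lines, and the $15$ nodes for $e(S)$. The only difference is that you spell out the smoothness of $S$ (no triple points, rank-two inertia at each node via Pardini's criterion), which the paper's proof takes for granted from the general setup; this is a welcome but minor elaboration.
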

 \begin{proof}
The canonical divisor of $S$ multiplied by $n$  satisfies
 $$ n K_S = \pi^* [ n K_Y + (n-1) D]  = -  \pi^* (n-2) K_Y \Rightarrow K_S^2 = n^3 (n-2)^2 K_Y^2 = n^3 (n-2)^2 5.$$
 In particular $K_S$ is ample for $ n \geq 3$.
 
 We use then the additivity of the topological Euler Poincar\'e characteristic,
 obswrving that the ten lines meet in 15 points, and each intersects three other lines. Hence
 $$  7 = e(Y) = e (Y-D) + e(D^*) + 15,$$
 where $D^*$ is the disjoint union of the ten lines, three times punctured, so that  $e(D^*) = - 10$ and $ e (Y-D)=2$.
 
 Using that unramified coverings of degree $d$ multiply $ e$ by $d$, we get 
 
 $$ e(S) = n^3 [  2 n^2  - 10 n  + 15  ]  .$$ 

\end{proof}

For the proof of theorem \ref{main} we use the following formulae by  R. Pardini (cf. \cite{pardini}).
Recall that $ \Omega_Y^1( \log D_j)_{j \in J}$ is the sheaf of meromorphic 1-forms generated as a sheaf of  $\hol_Y$-modules
by $ \Omega_Y^1$ and by $d(\log \de_j)$, for each divisor $D_j = div (\de_j)$, $j \in J$.

\begin{proposition}
Let $G$ be an Abelian group and let $\pi \colon S \rightarrow Y$ be a Galois cover with group $G$ between compact smooth manifolds. Then for each character $\psi \in G^*$ we have
$$
\pi_*(\Omega_S^1 \otimes \Omega_S^2)^{\psi} = \Omega_Y^1( \log D_{\s} : \s \in \sS_{\psi})(K_Y + \mathcal{L}_{\psi}),
$$
where $\sS_{\psi} \colon = \{\s : \psi (\s)  \neq n - \frac{n}{m(\s)} | m (\s) = \ord (\s)  \}$.
\end{proposition}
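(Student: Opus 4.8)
The plan is to realize the $\psi$-eigensheaf as the kernel of a residue map and thereby reduce the whole statement to a one-variable local computation at the generic point of each branch component $D_\sigma$. Since $S$ and $Y$ are smooth and $\pi$ is finite and flat, $\pi_*(\Omega^1_S\otimes\Omega^2_S)$ is a locally free $\hol_Y$-module carrying a $G$-action, so each isotypical summand $(\pi_*(\Omega^1_S\otimes\Omega^2_S))^\psi$ is locally free; being locally free on a surface, it is determined by its restriction to the complement of any finite set, in particular away from $\Sing(D)$. Hence it suffices to pin down its behaviour along each smooth component $D_\sigma$ minus the finitely many triple points.

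I would write $R:=\pi^{-1}(D)_{\mathrm{red}}=\sum_\sigma R_\sigma$, a smooth divisor on $S$, and start from the residue sequence of logarithmic differentials
\[0\to\Omega^1_S\otimes\Omega^2_S\to\Omega^1_S(\log R)\otimes\Omega^2_S\xrightarrow{\ \mathrm{res}\ }\bigoplus_\sigma\Omega^2_S|_{R_\sigma}\to0.\]
The key structural input is the standard identity $\Omega^1_S(\log R)=\pi^*\Omega^1_Y(\log D)$, valid because $\pi$ is tamely ramified along $D$ (locally $\pi^*\tfrac{dz}{z}=m\tfrac{dw}{w}$). Applying $\pi_*$ (exact, since $\pi$ is finite) and taking $\psi$-eigenparts (exact in characteristic $0$), the middle term becomes, by the projection formula, $\Omega^1_Y(\log D)\otimes(\pi_*\Omega^2_S)^\psi$. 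Using the standard description of $\pi_*\omega_S$ for abelian covers, namely $(\pi_*\Omega^2_S)^\psi=\hol_Y(K_Y+\mathcal{L}_\psi)$ with the character conventions of the Lemma above, I obtain
\[(\pi_*(\Omega^1_S\otimes\Omega^2_S))^\psi=\ker\Big[\Omega^1_Y(\log D)(K_Y+\mathcal{L}_\psi)\xrightarrow{\ \mathrm{res}^\psi\ }\bigoplus_\sigma(\pi_*\Omega^2_S|_{R_\sigma})^\psi\Big].\]

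It then remains to compute this kernel. Since $\Omega^1_Y(\log D_{\mathcal{S}_\psi})\subset\Omega^1_Y(\log D)$ has quotient supported on $\bigcup_{\sigma\notin\mathcal{S}_\psi}D_\sigma$, the claim is exactly that, at the generic point of each $D_\sigma$, the $\psi$-part of the residue vanishes for $\sigma\in\mathcal{S}_\psi$ and is nonzero for $\sigma\notin\mathcal{S}_\psi$. I would verify this in the local cyclic model $z=w^{m}$, $m=m(\sigma)$, $D_\sigma=\{z=0\}$, $R_\sigma=\{w=0\}$, with $\sigma$ acting by $w\mapsto\zeta w$, $\zeta=\exp(2\pi i/m)$, and $y$ a coordinate along $D_\sigma$. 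One computes the $\sigma$-characters of $dw,dy$ and of the canonical generator $dw\wedge dy$, writes the $\psi$-normalized generator of the transverse logarithmic slot as $w^{t}\,\tfrac{dw}{w}\otimes(dw\wedge dy)$ with $t\ge0$ the minimal twist making the total character agree with the normalization imposed by $\mathcal{L}_\psi$, and observes that its residue (its restriction to $w=0$) is nonzero precisely when $t=0$. As $\psi(\sigma)$ ranges over its $m$ possible values $t$ runs through $0,\dots,m-1$, so $t=0$ occurs for a single value of $\psi(\sigma)$; tracking the characters through the conventions identifies that value as $n-n/m(\sigma)$. Thus the residue survives in the $\psi$-part exactly when $\psi(\sigma)=n-n/m(\sigma)$, i.e. exactly when $\sigma\notin\mathcal{S}_\psi$, which is what is needed; gluing the local models over all components and invoking local freeness yields the asserted global identification.

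The main obstacle is precisely this last bookkeeping: matching the character conventions so that the twist emerges as $K_Y+\mathcal{L}_\psi$ rather than $K_Y+\mathcal{L}_{\psi^{-1}}$, and isolating the exact threshold $n-n/m(\sigma)$. The value $n/m$ drops out of the naive computation, and it is the combination of the unavoidable inverse in the convention $\pi_*\hol_S=\bigoplus_\psi\mathcal{L}_\psi^{-1}$ with the extra shift coming from the $\Omega^2_S$-factor (the transformation of $dw\wedge dy$) that turns it into $n-n/m(\sigma)$. Everything else — flatness and exactness of $\pi_*$, the projection formula, the logarithmic pullback identity, and the reflexivity and local freeness used for gluing — is formal.
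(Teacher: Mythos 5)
This proposition is not proved in the paper at all: it is quoted as one of ``the following formulae by R.~Pardini'' and used as a black box, so there is no in-paper argument to compare against. Your proposal is a correct reconstruction, and it is essentially the standard derivation (and in substance Pardini's own): since both sides are locally free, hence reflexive, it suffices to identify them away from the finitely many singular points of $D$ (formally, $F\cong j_*(F|_U)$ for $U$ the complement of a codimension-two set --- this is the precise form of your ``determined by its restriction'' step); the residue sequence upstairs together with $\Omega^1_S(\log R)=\pi^*\Omega^1_Y(\log D)$ and the projection formula identifies the $\psi$-part of the middle term as $\Omega^1_Y(\log D)(K_Y+\mathcal{L}_\psi)$, using $(\pi_*\omega_S)^{\psi}=\hol_Y(K_Y+\mathcal{L}_\psi)$, which comes from relative duality $\pi_*\omega_S\cong \mathcal{H}om(\pi_*\hol_S,\omega_Y)$ and is exactly where the inverse in the convention $\pi_*\hol_S=\bigoplus_\psi\mathcal{L}_\psi^{-1}$ gets absorbed; and the local cyclic model $z=w^m$ shows that the transverse generator $w^t\,\frac{dw}{w}\otimes(dw\wedge dy)$ has nonvanishing residue only for $t=0$, the extra unit shift from the $dw\wedge dy$ factor turning the exceptional character value into $\psi(\sigma)=n-n/m(\sigma)$; for that value the kernel of the residue is the non-logarithmic slot $dz\otimes\eta$, for all other values the full logarithmic slot survives, which is precisely the assertion. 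The only points I would insist you spell out are the two you already flag: the reflexive-extension argument that globalizes the generic identification, and the duality bookkeeping fixing which eigensheaf of $\pi_*\omega_S$ carries $K_Y+\mathcal{L}_\psi$ rather than $K_Y+\mathcal{L}_{\psi^{-1}}$; with those made explicit the proof is complete, and it is consistent with the paper's use of the formula (e.g.\ the threshold $\psi(\sigma)\neq n-1$ when every $\sigma$ has order $n$, as in Remark \ref{h2=0}).
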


\begin{remark}\label{h2=0}
In our situation: $\s \in G$ has always order $ m (\s) = n$, hence 
$$
\sS_{\psi} =  \{\s : \psi (\s)  \neq n - 1  \}.
$$

Recall also once more that each $D_{\sigma} \neq 0$ is an irreducible $(-1)$-curve.
\end{remark}

Observe that by Serre duality we have, for all characters $\psi$: 

$$
h^i (S, \Theta_S)^{- \psi} = h^{2-i}( \Omega_Y^1( \log D_{\sigma} : \psi(\sigma) \neq -1)(K_Y + \mathcal{L}_{\psi})).
$$

Since $S$ is of general type, $h^0 (S, \Theta_S) = 0$, hence in order to show that $h^1 (S, \Theta_S) = 0$ it suffices to prove that $\forall \psi$:
$$
h^0(\Omega_Y^1( \log D_{\sigma} : \psi(\sigma) \neq -1)(K_Y + \mathcal{L}_{\psi})) = \chi (\Omega_Y^1( \log D_{\sigma} : \psi(\sigma) \neq -1)(K_Y + \mathcal{L}_{\psi})).
$$

\begin{lemma}\label{chi}
For each $\psi \in ((\ZZ/ n \ZZ)^5)^*$ and each divisor $\Delta$ on $Y$ we have:
\begin{equation}
\chi (\Omega_Y^1( \log D_{\sigma} : \psi(\sigma) \neq -1)(\Delta)) = \Delta^2 -5 
+ \sum_{\sigma : \psi(\sigma) \neq -1} (1+ D_{\sigma}\cdot \Delta).
\end{equation}
\end{lemma}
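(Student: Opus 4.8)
The plan is to reduce everything to Hirzebruch--Riemann--Roch on $Y$ and on the rational curves $D_\sigma$ via the logarithmic residue sequence. Write $D' := \sum_{\sigma\,:\,\psi(\sigma)\neq -1} D_\sigma$, so that $\Omega_Y^1(\log D') = \Omega_Y^1(\log D_\sigma : \psi(\sigma)\neq -1)$. By Remark \ref{h2=0} each $D_\sigma$ occurring here is an irreducible $(-1)$-curve, hence isomorphic to $\PP^1$; moreover the ten lines of $Y$ meet only in the $15$ points, each lying on exactly two of them (since $E_{ij}E_{hk}=1$ precisely when $\{i,j\}\cap\{h,k\}=\emptyset$, so no three lines are concurrent). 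Consequently the full configuration $D$, and a fortiori any sub-configuration such as $D'$, is a reduced simple normal crossings divisor. This is the only geometric input, and it is what legitimises the residue sequence below.

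First I would write down the Poincar\'e residue sequence
\[
0 \to \Omega_Y^1 \to \Omega_Y^1(\log D') \to \bigoplus_{\sigma\,:\,\psi(\sigma)\neq -1}\hol_{D_\sigma}\to 0,
\]
tensor it with the line bundle $\hol_Y(\Delta)$ (which preserves exactness), and invoke additivity of $\chi$ to obtain
\[
\chi(\Omega_Y^1(\log D')(\Delta)) = \chi(\Omega_Y^1(\Delta)) + \sum_{\sigma\,:\,\psi(\sigma)\neq -1}\chi(\hol_{D_\sigma}(\Delta)).
\]
Each summand on the right is the Euler characteristic on $D_\sigma\cong\PP^1$ of the restriction $\hol_Y(\Delta)|_{D_\sigma}$, a line bundle of degree $\Delta\cdot D_\sigma$; by Riemann--Roch on $\PP^1$ it equals $1+\Delta\cdot D_\sigma$, which produces exactly the sum appearing in the statement.

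It then remains only to evaluate $\chi(\Omega_Y^1(\Delta)) = \Delta^2 - 5$. For this I would apply Hirzebruch--Riemann--Roch to the rank $2$ bundle $\Omega_Y^1(\Delta)$, using $c_1(\Omega_Y^1)=K_Y$, $c_2(\Omega_Y^1)=c_2(T_Y)=e(Y)=7$, together with $K_Y^2=5$ and $\chi(\hol_Y)=1$. Expanding $\mathrm{ch}(\Omega_Y^1(\Delta))\cdot\mathrm{td}(T_Y)$ and extracting the degree-$2$ component, the cross terms in $K_Y\cdot\Delta$ cancel and the numerical contributions collapse to $\Delta^2-5$. Substituting this into the displayed additivity identity yields the asserted formula.

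The one step demanding genuine care is the normal crossings property of $D'$, without which the residue sequence would fail; but this is immediate from the intersection table of the ten lines, so I anticipate no real obstacle. The remaining Riemann--Roch bookkeeping, on $Y$ and on the $\PP^1$'s, is entirely routine.
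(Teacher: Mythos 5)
Your proof is correct and follows essentially the same route as the paper: the twisted Poincaré residue sequence, additivity of $\chi$, Riemann--Roch on each $D_\sigma\cong\PP^1$ giving $1+\Delta\cdot D_\sigma$, and the Chern class computation $\chi(\Omega_Y^1(\Delta))=\Delta^2-5$ (the paper merely records $\chi(\Omega_Y^1)=-5$ and calls the rest "an easy Chern classes calculation"). Your extra verification that the sub-configuration $D'$ has normal crossings is a sound, if implicit in the paper, justification for the residue sequence.
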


\begin{proof}
Consider the exact residue sequence 
\begin{equation}
0 \rightarrow \Omega_Y^1(\Delta) \rightarrow \Omega_Y^1( \log D_{\sigma} : \psi(\sigma) \neq -1)(\Delta) \rightarrow \bigoplus_{\sigma : \psi(\sigma) \neq -1} \mathcal{O}_{D_{\sigma}} (\Delta) \rightarrow 0.
\end{equation}
An easy Chern classes calculation (since $\chi (\Omega_Y^1) = - 5$) yields
$$
\chi(\Omega_Y^1(\Delta)) = \Delta^2 -5
$$ 
moreover 
$$
\chi(\bigoplus_{\sigma : \psi(\sigma) \neq -1} \mathcal{O}_{D_{\sigma}} (\Delta)) = \sum_{\sigma : \psi(\sigma) \neq -1} (1+ D_{\sigma}\Delta)
$$
hence the claim follows.
\end{proof}

\begin{remark}
Recall once more that the branch divisor $D$ of the Kummer covering consists of 10 $(-1)$-curves which have the following property:
\begin{itemize}
\item if $B$ is an irreducible component of $D$, then there exist exactly 3 irreducible components $A_1, A_2, A_3$ of $D$ such that $A_1\cdot B = A_2\cdot B= A_3 \cdot B = 1$, all the other irreducible components of $D$ are disjoint from $B$.
\end{itemize}
\end{remark}

In the sequel we give a complete classification of the classes of the divisors  $K_Y + \mathcal{L}_{\psi}$, $\psi \in ((\ZZ/ n \ZZ)^5)^*$ in the Neron-Severi group $NS (Y)$ of $Y$ (here $ NS(Y) = \Pic (Y)$).

\subsection{{\color{darkgreen}$F(a_1, \ldots , a_5) = n$}} Make first the obvious observation that $F(a_1, \ldots , a_5) $ is greater or equal to the
sum of the positive summands yielding $ \la_i(a_1, \ldots , a_5), \ \forall i.$
If there is an $i \in \{1,2,3,4\}$ such that $\lambda_i = n$, then for the strict transforms of the three lines, say $L_1, L_2, L_3$, passing through $p_i$, we have $\psi(L_1)+\psi(L_2) +\psi(L_3) = n$, hence for the other three lines we have $\psi(L) = 0$ (since $F=n$). Therefore $\lambda_j = 0$ for $j \neq i$. 

Otherwise $\lambda_i=0$ for all $i$.

Therefore in this case we have the possibilities:
\begin{itemize}
\item[(1)] $K_Y + \mathcal{L}_{\psi} \equiv -2L + E_i + E_j + E_k$, where $i,j,k \in \{1,2,3,4\}$ pairwise different;
\item[(2)] $K_Y + \mathcal{L}_{\psi} \equiv -2L  + E_1 + E_2+E_3+E_4$.
\end{itemize}

\subsection{{\color{darkgreen}$F(a_1, \ldots , a_5) = 2n$}}
Assume there is an $i \in \{1,2,3,4\}$ such that $\lambda_i = 2n$, then the same argument as above shows that for the strict transforms of the three lines, say $L_1, L_2, L_3$, passing through $p_i$, we have $\psi(L_1)+ \psi(L_2) + \psi(L_3) = 2n$, while  $\psi(L)=0$ for the strict transforms of the other three lines. Therefore $\lambda_j = 0$ for $j \neq i$ and we get the following possibility:
\begin{itemize}
\item[(3)] $K_Y + \mathcal{L}_{\psi} \equiv -L - E_i + E_j + E_k +E_l$, where $\{i,j,k,l\}=\{1,2,3,4\}$.
\end{itemize}
We can now assume that $\lambda_i \leq n$ for all $i \in \{1,2,3,4\}$. Recall that 
$$
 \lambda_1 +\lambda_2 +\lambda_3 + \lambda_4 = 4n - S, \ \ S \in \{0,n,2n,3n\}.
$$
If $S=0$, then $ \lambda_1 = \lambda_2 =\lambda_3 = \lambda_4= n$, hence we have the possibility:
\begin{itemize}
\item[(4)] $K_Y + \mathcal{L}_{\psi} \equiv -L$.
\end{itemize}

\begin{remark}\label{s=0}
Observe that $S=0$ implies that $a_1 = a_4$, $a_2 = a_5$, $a_3 = [-(a_1+a_2)]$ and $F = 2([a_1] + [a_2]+[a_3])$.
\end{remark}

If $S = n$, then $ \lambda_1 +\lambda_2 +\lambda_3 +\lambda_4 = 3n$, hence there is an $i \in \{1,2,3,4\}$ such that $\lambda_i = 0$ and $\lambda_j =n$ for $j \neq i$. We have then:
\begin{itemize}
\item[(5)] $K_Y + \mathcal{L}_{\psi} \equiv -L + E_i$ for some $i \in \{1,2,3,4\}$.
\end{itemize}

If $S = 2n$, then $\lambda_1 +\lambda_2 +\lambda_3 +\lambda_4 = 2n$, and we have then:
\begin{itemize}
\item[(6)] $K_Y + \mathcal{L}_{\psi} \equiv -L + E_i + E_j$ for some $i \neq j \in \{1,2,3,4\}$.
\end{itemize}

Finally, if $S = 3n$, then $\lambda_1 +\lambda_2 +\lambda_3 +\lambda_4 = n$, and we have:
\begin{itemize}
\item[(7)] $K_Y + \mathcal{L}_{\psi} \equiv -L + E_i + E_j+E_k$ where $i \neq j \neq k \in \{1,2,3,4\}$.
\end{itemize}

\subsection{{\color{darkgreen}$F(a_1, \ldots , a_5) = 3n$}}

Assume that $S=0$: then, by remark \ref{s=0}, we have:
$$
3n = F = 2([a_1] + [a_2]+[a_3]) \ \implies [a_1] + [a_2]+[a_3] = \frac{3n}{2}.
$$

If $n$ is odd, this is a contradiction, whence we can assume that $n$ is even. On the other hand, we have that $\lambda_i = n$ for all $i$, contradicting the fact that $\lambda_1 +\lambda_2 +\lambda_3 +\lambda_4 = 6n$. Hence this case is not possible.

If we assume that there is an $i \in \{1,2,3,4\}$ such that $\lambda_i = 2n$, then for the strict transforms of the three lines, say $L_1, L_2, L_3$, passing through $p_i$, we have $\psi(L_1)+ \psi(L_2) + \psi(L_3) \geq 2n$, hence for the other three lines we have $\psi(L_4)+ \psi(L_5) + \psi(L_6) \leq n$ (since $F=3n$). Therefore $\lambda_j \leq n$ for $j \neq i$.

\underline{$S=n$}: then $\lambda_1 +\lambda_2 +\lambda_3 +\lambda_4  = 5n$, hence there is an $i \in \{1,2,3,4\}$ such that $\lambda_i = 2n$ and $\lambda_j = n$ for $j \neq i$.
Therefore we have the possibility:
\begin{itemize}
\item[(8)] $K_Y + \mathcal{L}_{\psi} \equiv -E_i$ for $i  \in \{1,2,3,4\}$.
\end{itemize}

\underline{$S=2n$}: then $\lambda_1 +\lambda_2 +\lambda_3 +\lambda_4  = 4n$, hence either there are  $i , j \in \{1,2,3,4\}$ such that $\lambda_i = 2n$, $\lambda_j = 0$ and $\lambda_k=n$ for $k \neq i,j$, or $\lambda_i =n$ for all $i  \in \{1,2,3,4\}$.
Therefore we have the possibilities:
\begin{itemize}
\item[(9)] $K_Y + \mathcal{L}_{\psi} \equiv E_j - E_i$, for $i \neq j  \in \{1,2,3,4\}$;
\item [(10)] $K_Y + \mathcal{L}_{\psi} \equiv 0$.
\end{itemize}

\underline{$S=3n$}: then $\lambda_1 +\lambda_2 +\lambda_3 +\lambda_4  = 3n$, hence we have the possibilities:
\begin{itemize}
\item[(11)] $K_Y + \mathcal{L}_{\psi} \equiv E_j + E_i-E_k$, for $i \neq j \neq k  \in \{1,2,3,4\}$;
\item [(12)] $K_Y + \mathcal{L}_{\psi} \equiv E_i$ for $i  \in \{1,2,3,4\}$.
\end{itemize}

\subsection{{\color{darkgreen}$F(a_1, \ldots , a_5) = 4n$}}

If $S=0$, then by remark \ref{s=0}, we have:
$$
4n = F = 2([a_1] + [a_2]+[a_3]) \ \implies [a_1] + [a_2]+[a_3] = 2n.
$$
Moreover, $a_1 = a_4$, $a_2 = a_5$, hence $\lambda_i = 2n$ for all $i \in \{1,2,3,4\}$. This implies:
\begin{itemize}
\item[(13)] $K_Y + \mathcal{L}_{\psi} \equiv L  -E_1-E_2-E_3- E_4$.
\end{itemize}

Assume that there is an $i \in \{1,2,3,4\}$ such that $\lambda_i = 0$: then for the strict transforms of the three lines, say $L_1, L_2, L_3$, passing through $p_i$, we have $\psi(L_1)+ \psi(L_2) + \psi(L_3) < n$, hence for the other three lines we have $3n-3 \geq \psi(L_4)+ \psi(L_5) + \psi(L_6) > 3n$ (since $F=4n$), a contradiction. Therefore $\lambda_j \geq n$ for $i \in \{1,2,3,4\}$.

\underline{$S=n$}: then $\lambda_1 +\lambda_2 +\lambda_3 +\lambda_4  = 7n$, hence  we have the possibility:
\begin{itemize}
\item[(14)] $K_Y + \mathcal{L}_{\psi} \equiv L  -E_i-E_j-E_k$ for $i\neq j \neq k  \in \{1,2,3,4\}$.
\end{itemize}

\underline{$S=2n$}: then $\lambda_1 +\lambda_2 +\lambda_3 +\lambda_4  = 6n$, hence  we have:
\begin{itemize}
\item[(15)] $K_Y + \mathcal{L}_{\psi} \equiv L -E_i - E_j$, for $i \neq j  \in \{1,2,3,4\}$.
\end{itemize}

\underline{$S=3n$}: then $\lambda_1 +\lambda_2 +\lambda_3 +\lambda_4  = 5n$, hence we have:
\begin{itemize}
\item [(16)] $K_Y + \mathcal{L}_{\psi} \equiv L-E_i$ for $i  \in \{1,2,3,4\}$.
\end{itemize}

\subsection{{\color{darkgreen}$F(a_1, \ldots , a_5) = 5n$}}
Assume that there is an $i \in \{1,2,3,4\}$ such that $\lambda_i < 2n$: then for the strict transforms of the three lines, say $L_1, L_2, L_3$, passing through $p_i$, we have $\psi(L_1)+ \psi(L_2) + \psi(L_3) < 2n$, hence for the other three lines we have $3n-3 \geq \psi(L_4)+\psi(L_5) + \psi(L_6) > 3n$ (since $F=5n$), a contradiction. Therefore $\lambda_j = 2n$ for $i \in \{1,2,3,4\}$. Therefore we have:

\begin{itemize}
\item [(17)] $K_Y + \mathcal{L}_{\psi} \equiv 2L- E_1-E_2-E_3-E_4$ for $i  \in \{1,2,3,4\}$.
\end{itemize}

We shall use the following

\begin{proposition}\label{rank}
Let $n = 5$, or $n > 6$. Then for  $\psi \in ((\ZZ / n \ZZ)^5)^*$ we have $\rk\{D_{\sigma} : \psi(\sigma) \neq -1\} < 5$ if and only if 
$\{D_{\sigma} : \psi(\sigma) \neq -1\} = \{A, B_1, B_2, B_3\}$, where $B_1,B_2, B_3$ are pairwise disjoint and $AB_i=1$. In this case $K_Y + \mathcal{L}_{\psi} \equiv A$.
\end{proposition}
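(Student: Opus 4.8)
The plan is to read the rank equivalence straight off Proposition \ref{rankcond}, and to invest the actual work in the divisor-class identity $K_Y + \mathcal{L}_\psi \equiv A$. Since the hypothesis $n=5$ or $n>6$ forces $n\neq 4,6$, assertion (1) of Proposition \ref{rankcond} gives verbatim
$$
\rk\{D_\sigma : \psi(\sigma) \neq -1\} < 5 \iff \{D_\sigma : \psi(\sigma) \neq -1\} = \{A, B_1, B_2, B_3\},
$$
with $AB_i=1$ and $B_iB_j=0$. So the only genuinely new content is the identification of the class of $K_Y + \mathcal{L}_\psi$.

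For that I would exploit the $\mathfrak S_5$-symmetry. The group $\mathfrak S_5$ acts transitively on the ten lines (equivalently, on the $2$-subsets of $\{1,\dots,5\}$), and the assignment $\psi \mapsto K_Y + \mathcal{L}_\psi$ is $\mathfrak S_5$-equivariant: the cover $\pi$ is equivariant for $\Aut(S) = G \rtimes \mathfrak S_5$, the class $K_Y$ is $\mathfrak S_5$-invariant, and the eigensheaf $\mathcal{L}_\psi$ is defined intrinsically from the $G$-action, so a permutation $\sigma$ carries $\mathcal{L}_\psi$ to the eigensheaf of the transformed character and carries the configuration $\{D_\sigma : \psi(\sigma)\neq -1\}$ to the corresponding one. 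Every configuration of the form ``a line $A$ together with the three lines meeting it'' lies in a single $\mathfrak S_5$-orbit, so it suffices to verify the claim for one representative.

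I would take $A = E_{35} = E_3$, so that $\{B_1,B_2,B_3\} = \{E_{14}, E_{24}, E_{12}\}$ and the six lines with $\psi(\sigma) = -1$ are $E_{13}, E_{15}, E_{23}, E_{25}, E_{34}, E_{45}$. Writing out the six conditions $\psi(\varphi(\epsilon_{ij})) = -1$ through the explicit monodromy $\varphi$ pins down $\psi$ uniquely as $\psi = (1,1,-1,-1,-1)$; in particular the configuration already determines the character, so there is no residual freedom to worry about. Substituting into the Lemma yields $F = 3n$, $\lambda_1 = \lambda_2 = \lambda_4 = n$ and $\lambda_3 = 0$, whence $\mathcal{L}_\psi \equiv 3L - E_1 - E_2 - E_4$ and
$$
K_Y + \mathcal{L}_\psi \equiv (-3L + E_1 + E_2 + E_3 + E_4) + (3L - E_1 - E_2 - E_4) = E_3 = A,
$$
as claimed.

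The one delicate point — and exactly the reason the statement must exclude $n=3$ — is the evaluation of
$$
\lambda_3 = [a_1] + [a_2] + [-(a_1 + \cdots + a_5)] - [-(a_3 + a_4 + a_5)].
$$
Here $-(a_3+a_4+a_5) \equiv 3$, and $\lambda_3 = 3 - [3]$ vanishes only because $[3] = 3$, which requires $n \geq 4$. For $n=3$ one instead has $[3]=0$, forcing $\lambda_3 = n$ and $K_Y + \mathcal{L}_\psi \equiv 0$ (case (10) of the classification) rather than $A$, even though the rank equivalence itself remains valid. So I expect the main obstacle to be not the rank combinatorics (already disposed of in Proposition \ref{rankcond}) but the careful bookkeeping of the reduction $[\,\cdot\,]$ for small $n$; the separate exclusion of $n=6$ is, by contrast, forced already at the level of the rank equivalence by the extra configuration recorded in Proposition \ref{rankcond}(2), which is not of the shape $\{A,B_1,B_2,B_3\}$.
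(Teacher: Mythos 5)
Your proposal is correct and follows essentially the same route as the paper: the rank equivalence is quoted from Proposition \ref{rankcond}, the $\mathfrak S_5$-transitivity on the ten lines reduces the class computation to a single representative, and the character is then determined explicitly and fed into the lemma for $n\mathcal{L}_\psi$. The only (immaterial) difference is the choice of representative — you take $A=E_{35}$ with $\psi=(1,1,-1,-1,-1)$, $F=3n$, $\lambda_3=0$, while the paper takes $A=E_{34}$ with $\psi=(n-1,n-1,3,n-1,n-1)$, $F=4n$ — and your observation that $\lambda_3=3-[3]$ pinpoints exactly where $n=3$ fails is consistent with the paper's Proposition \ref{346}.
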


\begin{proof}
The first part is Proposition \ref{rankcond}. It remains to show that if $\{D_{\sigma} : \psi(\sigma) \neq -1\} = \{A, B_1, B_2, B_3\}$  then $K_Y + \mathcal{L}_{\psi} \equiv A$.

Using the $\mathfrak S_5$ symmetry, we may assume that $A$ is the strict transform of one of the six lines  in $\PP^2$,  $A= E_{34}$. 

Then we have:
$$
\begin{array}{lll}
 \psi(E_{14}) = a_1 = n-1 , & \psi(E_{24}) = a_2 = n-1  &  \psi(E_{23}) = a_4 = n-1 ,\\
 \psi(E_{13}) = a_5 = n-1 , & \psi(E_{45}) = [2-a_3] = n-1  &  \psi(E_{35}) = [2-a_3] = n-1, \\
\end{array}
$$
whence $a_3 =3$.
This implies that $F(n-1,n-1, 3,n-1,n-1) = 4n$ and $\lambda_1=\lambda_2 =2n$, $\lambda_3=\lambda_4 =n$. Hence $K_Y + \mathcal{L}_{\psi} \equiv E_{34}$.

\end{proof}

\begin{remark}\label{rank2}
1) For later use, we work out also the calculation in the case where $A=E_{i5}$, wlog $i=4$. 

Here we have: $\psi(E_{13}) = \psi(E_{23})=\psi(E_{12})=n-1$, i.e. $a_1=a_2=a_3 = n-1$. Moreover,

$$
\begin{array}{lll}
 \psi(E_{15}) = [a_2+a_3+a_4] =[a_4-2] =n-1 & \implies & a_4 =1,\\
  \psi(E_{25}) = [a_1+a_3+a_5] =[a_5-2] =n-1 & \implies & a_5 =1,\\
   \psi(E_{35}) = [-(a_3+a_4+a_5)] =n-1 .
\end{array}
$$

Then $F(n-1,n-1,n-1,1,1) = 3n$, $\lambda_1=\lambda_2 =\lambda_3 =n$ and $\lambda_4 = 0$. Therefore $K_Y + \mathcal{L}_{\psi} \equiv E_{45}$.

 Observe that $\rk\{D_{\sigma} : \psi(\sigma) \neq -1\} < 5$ implies that $F(a_1, \ldots, a_5) \in \{3n, 4n\}$.

2) If $n=6$ and $\{D_{\sigma} : \psi(\sigma) =  -1\}=\{E_{ij} : j \in \{1,2,3,4,5\} \setminus \{i\}\}$, for some $i$, whence 
we get $K_Y + \mathcal{L}_{\psi} \equiv X_i \equiv E_{jk}+E_{lm}$ for $\{j,k,l,m\} = \{1,2,3,4,5\} \setminus \{i\}$. In fact, by symmetry, we can wlog assume that 
$$
\{D_{\sigma} : \psi(\sigma) \neq  -1\} = \{E_{15}, E_{25}, E_{35}, E_{45}\}, \ K_Y+ \mathcal{L}_{\psi} \equiv X_5 .
$$
In this case $a_i = 5 \ \forall i$ and the calculation is straightforward.

3) If $n=4$, then $\rk\{D_{\sigma} : \psi(\sigma) \neq -1\} <5$ if and only if one of the following  conditions is satisfied:
\begin{itemize}
\item[a)] $\{D_{\sigma} : \psi(\sigma) \neq -1\}=\{E_{T_1}, \ldots , E_{T_5}\}$, where  there is a $j$ such that $T_i \in \{1, \ldots, 5\} \setminus \{j\}$;
\item[b)] $\{D_{\sigma} : \psi(\sigma) \neq -1\}=\{E_{T_1},E_{T_2}  , E_{T_3}\}$, where there are $i,j$ such that $T_1, T_2, T_3 \subset \{1, \ldots, 5\} \setminus \{i,j\}$.
\end{itemize}
In case b) we get  $K+ \mathcal{L}_{\psi} \equiv E_{ij}$.  Assume in fact $i=1, j=2$; then  $a_h = 3 \ \forall h \neq 3, a_3 =1,$ hence one easily sees that $K+ \mathcal{L}_{\psi} \equiv E_{12}$. 

In case a) instead, we have $K+ \mathcal{L}_{\psi} \equiv A-B$, where $B \in \{E_{T_1}, \ldots , E_{T_5}\}$,  $A \in \{D_{\sigma} : \psi(\sigma) = -1\}$, $AB=0$ and $A(E_{T_1} + \ldots +E_{T_5}) = 2$.

In fact, by symmetry, we can wlog assume that in case a)
$$
\{D_{\sigma} : \psi(\sigma) \neq -1\} = \{E_{14}, E_{12},E_{15},E_{25},E_{45} \};
$$
 then  $a_h = 3 \ \forall h = 2,3,4,5, a_1 \neq 1,3 $. For $a_1=0$ we get   $K_Y+ \mathcal{L}_{\psi} \equiv E_{35} -E_{15}$, 
$a_1=2$ we get   $K_Y+ \mathcal{L}_{\psi} \equiv E_{34} -E_{45}$.
\end{remark}

\section{Cohomology of logarithmic differential forms}

In this section we consider the following situation: let $P_1, \ldots , P_m$ be $m$ distinct points in $\PP^2$ and let
\begin{itemize}
\item $\pi \colon Y�:= \hat{\PP}^2(P_1, \ldots , P_m) \rightarrow \PP^2$ be the blow-up of the projective plane in $P_1, \ldots , P_m$;
\item $D_1, \ldots , D_N \subset \PP^2$ be smooth rational curves, such that
\item $D_1 + \ldots + D_N$ has global normal crossings.
\end{itemize}

The aim of this paragraph is to prove the following general vanishing theorem:

\begin{theorem}\label{gvt}
Let $\mu, l,r,k \in \NN$ such that 
$$
0 \leq \mu \leq l \leq r \leq k \leq N,
$$
and consider
\begin{itemize}
\item $A: = D_{l+1} + \ldots +D_r + \ldots +D_k$,
\item $B: = D_1 + \ldots +D_{\mu}$,
\item $D:=A-B$,
\item $\mathcal{F}:=\Omega_Y^1(\log D_1, \ldots , \log D_r)(D)$.
\end{itemize}

Assume that
\begin{enumerate}
\item $H^2(Y,\mathcal{F}) = 0$;
\item for $l+1 \leq i \leq r$ we have  $D_i(A-B) \geq -1$;
\item for $l+1 \leq i \leq k$ we have 
$$
D_i(\sum_{\nu = 1}^k D_{\nu} -B) \geq 1 ;
$$
\item 
$$
\rk \{ D_i | 1 \leq i \leq k,  D_i D_1 = \ldots = D_iD_{\mu}=0 \} \geq m+1-\mu +R,
$$
where
$$
R:= \sum_{i = \mu+1,  \exists j \in \{1, \ldots, \mu\} : D_j D_i \neq 0}^k(D_i(D_1 +\ldots +D_{\mu}) -1).
$$
\end{enumerate}
Then $H^1(Y, \mathcal{F}) = 0$.
\end{theorem}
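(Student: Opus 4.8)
The plan is to prove $H^1(Y,\mathcal F)=0$ by dismantling $\mathcal F$ through two families of short exact sequences --- the logarithmic residue sequences and the ordinary restriction sequences --- so as to reduce everything to the cohomology of line bundles on the rational curves $D_i\cong\PP^1$ together with the cohomology of one ``core'' logarithmic sheaf. The whole argument rests on $Y$ being a rational surface, so that $H^0(\Omega_Y^1)=H^1(\Oh_Y)=0$ and $H^1(\Omega_Y^1)\cong \Pic(Y)\otimes\CC$ has dimension $\rho(Y)=m+1$; hypothesis (1), $H^2(Y,\mathcal F)=0$, is precisely what lets the vanishing propagate upward through the connecting maps in these sequences (equivalently, it reduces $h^1=0$ to the upper estimate $h^0(\mathcal F)\le\chi(\mathcal F)$, the reverse inequality being automatic once $h^2=0$).

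First I would strip off the positive twist $A=D_{l+1}+\dots+D_k$. Writing $\mathcal F=\mathcal F_0(A)$ with $\mathcal F_0:=\Omega_Y^1(\log D_1,\dots,\log D_r)(-B)$, I add the curves $D_{l+1},\dots,D_k$ one at a time via restriction sequences $0\to\mathcal G_{i-1}\to\mathcal G_i\to\mathcal G_i|_{D_i}\to 0$. Since each $D_i\cong\PP^1$, the restricted sheaf $\mathcal G_i|_{D_i}$ decomposes, through the conormal/residue filtration of $\Omega_Y^1(\log\cdot)|_{D_i}$, into line bundles whose degrees are read off from the intersection numbers of $D_i$ with the twisting divisor. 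This is where conditions (2) and (3) enter: for a log‑pole divisor $D_i$ with $l+1\le i\le r$ the residue quotient is $\Oh_{D_i}(D)$ of degree $D_i\cdot D\ge-1$ by (2), while condition (3), namely $D_i\cdot(D_{\mu+1}+\dots+D_k)\ge1$ for $l+1\le i\le k$, is designed to force the remaining restriction pieces to have degree $\ge-1$ as well. In every case $H^1(\PP^1,\text{piece})=0$, and feeding these vanishings (together with (1) to kill the $H^2$ terms) into the long exact sequences reduces the whole problem to showing $H^1(\mathcal F_0)=0$ for the core sheaf.

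It remains to treat the core $\mathcal F_0=\Omega_Y^1(\log(D_1+\dots+D_r))(-B)$, and this is where I expect the genuine difficulty to lie. Here I would use the global residue sequence
\[
0\to\Omega_Y^1(-B)\to\Omega_Y^1(\log(D_1+\dots+D_r))(-B)\to\textstyle\bigoplus_{i=1}^r\Oh_{D_i}(-B)\to 0,
\]
so that $h^0(\mathcal F_0)$ is governed by the corank of the induced residue/connecting map into $H^1(\Omega_Y^1(-B))$, whose image is spanned by the classes of the relevant components $[D_i]$. In the clean case $B=0$ (so $\mu=0$, $R=0$) condition (4) reads $\rk\{[D_i]\}\ge m+1$, i.e. the components span $\Pic(Y)\otimes\CC$, which by the residue sequence forces $h^0$ of the log forms to vanish; the general threshold $m+1-\mu+R$ is exactly the bookkeeping of this span after one twists down by $B=D_1+\dots+D_\mu$ and peels off the $\mu$ divisors that simultaneously carry a log pole and are subtracted. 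Each such $D_j$ contributes a correction measured by its crossings with the other components, and collecting these corrections is what produces the term $R$. The main obstacle is therefore this rank accounting: tracking which divisor classes remain linearly independent after removing $B$ and after the restriction steps, and checking that the surviving rank meets the threshold $m+1-\mu+R$ dictated by $\rho(Y)=m+1$. Hypothesis (4) is precisely the hypothesis that makes this corank large enough to annihilate all potential global sections of $\mathcal F_0$, yielding $h^0(\mathcal F_0)=\chi(\mathcal F_0)$ and hence $H^1(\mathcal F_0)=0$, which completes the reduction.
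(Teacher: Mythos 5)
Your overall strategy --- d\'evissage through residue and restriction sequences down to a ``core'' logarithmic sheaf, followed by an estimate $h^0\le\chi$ coming from the rank of the divisor classes under the residue map --- is the same as the paper's, and you correctly identify the two mechanisms at work. But the way you organize the d\'evissage does not match the hypotheses, and this is a genuine gap rather than bookkeeping. Your core sheaf is $\mathcal F_0=\Omega_Y^1(\log D_1,\dots,\log D_r)(-B)$, with logarithmic poles only along $D_1,\dots,D_r$, whereas hypothesis (4) and the correction term $R$ range over \emph{all} $i\le k$, including the components $D_{r+1},\dots,D_k$ of $A$ that are \emph{not} log poles of $\mathcal F$. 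For your core the residue map only produces the classes $[D_i]$ with $i\le r$ (and among those only the ones with $D_iB=0$ survive the twist by $-B$), so (4) is not the right rank condition, and $\chi(\mathcal F_0)$ is not the quantity the threshold $m+1-\mu+R$ is calibrated against. The missing device is the second exact sequence the paper takes from Esnault--Viehweg,
$$
0 \to \Omega_Y^1(\log D_1, \dots , \log D_k)(-B) \to \Omega_Y^1(\log D_1, \dots , \log D_l)(A-B) \to \bigoplus_{i=l+1}^k \Omega^1_{D_i}\Bigl(\sum_{\nu=1}^k D_{\nu}-B\Bigr) \to 0,
$$
which trades the effective twist by $A$ for logarithmic poles along all of $D_{l+1},\dots,D_k$ in one step; its quotient pieces have degree $-2+D_i(\sum_{\nu=1}^k D_\nu-B)\ge-1$ precisely by (3), and the resulting core $\Omega_Y^1(\log D_1,\dots,\log D_k)(-B)$ is the sheaf for which (4) and the $\chi$-computation fit together.

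A second, independent problem is your plan to add the twist by $A$ one curve at a time. At the intermediate step for $D_i$ the quotient pieces have degrees governed by the \emph{partial} intersection number $D_i(D_{l+1}+\dots+D_i-B)$, which hypotheses (2) and (3) do not control: they bound the full sums $D_i(A-B)$ and $D_i(\sum_{\nu=1}^k D_\nu-B)$, and the partial sum is smaller by $D_i(D_{i+1}+\dots+D_k)\ge0$. Since in the intended application the $D_i$ are $(-1)$-curves which may meet $B$, these partial degrees can drop to $-2$ or below, so $H^1$ of the intermediate quotients need not vanish and the inductive transfer of $H^1=0$ breaks down. (A smaller slip: your connecting map lands in $H^1(\Omega_Y^1(-B))$, where ``the class of $D_i$'' is not defined; the paper gets the rank estimate by comparing, via a commutative diagram, with the untwisted sheaf $\Omega_Y^1(\log D_1,\dots,\log D_k)$, whose residues land in $H^1(\Omega_Y^1)$.)
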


\begin{proof}
Consider
$$
\mathcal{G}:=\Omega_Y^1(\log D_1, \ldots , \log D_l)(D).
$$
Then we have the exact residue sequence 
$$
0 \rightarrow \mathcal{G} \rightarrow \mathcal{F} \rightarrow \bigoplus_{i=l+1}^r \hol_{D_i}(A-B) \rightarrow 0.
$$
By (2), for all $l+1 \leq i \leq r$,  we have that  $\hol_{D_i}(A-B) \cong \hol_{\PP^1}(n)$, where $n \geq -1$. Therefore by the long exact cohomology sequence  it follows:
\begin{itemize}
\item $H^2(Y, \mathcal{G}) = 0$ (since $H^2(Y, \mathcal{F}) = 0$),
\item if $H^1(Y, \mathcal{G}) = 0$, then $H^1(Y, \mathcal{F}) = 0$.
\end{itemize}
Hence it suffices to show that $H^1(Y, \mathcal{G}) = 0$.

Set now
$$
\mathcal{G'}:=\Omega_Y^1(\log D_1, \ldots , \log D_k)(-B)
$$
and consider the exact sequence (cf. e.g. \cite{ev}, p. 13)
$$
0 \rightarrow \mathcal{G'} \rightarrow \mathcal{G} \rightarrow \bigoplus_{i=l+1}^k \Omega^1_{D_i}(D_1 + \ldots +D_k-B) \rightarrow 0.
$$

Since $D_i \cong \PP^1$, we have 
$$
\Omega^1_{D_i}(D_1 + \ldots +D_k-B) \cong \hol_{\PP^1}(D_i(K_Y+D_i +D_1 + \ldots +D_k-B)) \cong \hol_{\PP^1}(n),
$$
where by (3), $n= -2 + D_i(D_1 + \ldots +D_k-B) \geq -1$.

Therefore it follows:
\begin{itemize}
\item $H^2(Y, \mathcal{G'}) = 0$ (since $H^2(Y, \mathcal{G}) = 0$),
\item if $H^1(Y, \mathcal{G'}) = 0$, then $H^1(Y, \mathcal{G}) = 0$,
\end{itemize}
and it suffices to show that $H^1(Y, \mathcal{G'}) = 0$.

Using the analog of lemma \ref{chi} for $m$ blow ups we calculate

 \begin{multline}
\chi(\mathcal{G'}) = B^2 - m - 1 + \sum_{i=1}^k(1-B D_i)= \\
= (D_1 + \ldots +D_{\mu})^2 - m - 1 
+ \sum_{i=1}^k(1-D_i(D_1 + \ldots +D_{\mu})) = \\
= -m-1+ \mu + \sum_{i=\mu + 1}^k(1-D_i(D_1 + \ldots +D_{\mu})) 
= (\sum_{i = \mu + 1, D_1D_i = \ldots = D_{\mu}D_i=0}^k 1) -m-1+ \mu -R.
\end{multline}

On the other hand, consider the commutative diagram:

\medskip

 \begin{equation}\label{diag1}
\xymatrix{
0 \ar[d]&  0 \ar[d]  \\
H^0(\mathcal{G'}) \ar[d] \ar[r] &H^0(\Omega_Y^1(\log D_1, \ldots , \log D_k)) \ar[d] \\
\bigoplus_{i : D_iD_1 =\ldots = D_iD_{\mu}=0} H^0(\mathcal{O}_{D_i})\ar[r]&\bigoplus_{i=1}^k H^0(\mathcal{O}_{D_i})\ar[d] \\
& H^1(\Omega_Y^1)
}
\end{equation}

Since the homomorphism in $ H^1(\Omega_Y^1) $ takes the constant function equal to $1$ on $D_i$ to the Chern class of $D_i$, this  implies that 
\begin{multline}
h^0(Y, \mathcal{G'}) \leq (\sum_{i =1, D_1D_i = \ldots = D_{\mu}D_i=0}^k 1) - \rk \{ D_i : D_i D_1 = \ldots D_iD_{\mu}=0, 1 \leq i \leq k\} \leq \\
\leq ( \sum_{i =1, D_1D_i = \ldots = D_{\mu}D_i=0}^k 1) -m-1+ \mu -R = \chi(\mathcal{G'}),
\end{multline}
where the last inequality holds by assumption (4).

Since $h^2(Y, \mathcal{G'})  = 0$ the inequality  $h^0(Y, \mathcal{G'}) \leq \chi(\mathcal{G'}) = h^0(Y, \mathcal{G'}) - h^1(Y, \mathcal{G'}) $,  implies that $h^1(Y, \mathcal{G'}) =0)$ and the theorem is proven.

\end{proof}

\begin{remark}
Note that the assumption "(1) $h^2(\mathcal{F})=0$" in the previous Theorem is always satisfied in our applications by Remark \ref{h2=0}.
\end{remark}
 
\section{Proof of Theorem \ref{main} for $n \neq 4,6$}

In this section we use the classification of all the possibilities for $K+\mathcal{L}_{\psi}$ in order to prove our main result.
Let $\psi \in ((\ZZ/ n \ZZ)^5)^*$.

\subsection{{\color{darkgreen}$F(a_1, \ldots , a_5) = n$}}
Observe that by remark \ref{rank2} we know that $\rk\{D_{\sigma} : \psi(\sigma) \neq -1\} =5$.
We have two cases here:
\begin{itemize}
\item[(1)] $K_Y + \mathcal{L}_{\psi} \equiv -2L + E_i + E_j + E_k$, where $i,j,k \in \{1,2,3,4\}$ pairwise different;
\item[(2)] $K_Y + \mathcal{L}_{\psi} \equiv -2L  + E_1 + E_2+E_3+ E_4$.
\end{itemize}
In case (1) we can assume wlog that 
$$
K_Y + \mathcal{L}_{\psi} \equiv -2L + E_1 + E_2 + E_3 \equiv -X_5 -E_4 \equiv -E_{13} -E_{24}-E_{45}.
$$
Hence  $ \mathcal{L}_{\psi}  = L - E_4$,  $\lambda_4 = n$ and $F(a_1, \ldots , a_5) = n$, therefore  we have $\psi(E_{i4}) =0 \ \forall i \leq 3$, i.e. $a_1=a_2=a_3 = 0$, 
which implies also $\psi(E_{45}) = 0$. Since $[a_4]+[a_5]+[-(a_1+ \ldots + a_5)]  = n$, we have  $\psi(L) =  -1$ for at most one of  the strict transforms of lines in $\PP^2$ contained in $D$; in case there is one such line, assume  wlog $\psi(E_{23}) = a_4 = n-1$ and for all the others $\psi \neq -1$. This implies that we have
$$
\{D_{\sigma} : \psi(\sigma) =  -1\} = \emptyset \ \ \rm{or} \ \  \{D_{\sigma} : \psi(\sigma) =  -1\} = \{E_{23}, E_{15}\}.
$$

We dispose of case (1) via the following

\begin{proposition}
\begin{multline}
h^1(Y, \Omega_Y^1(\log E_{ij} : \{i,j\} \subset \{1,2,3,4,5\})(-E_{13} -E_{24}-E_{45})) = \\
= h^1(Y, \Omega_Y^1(\log E_{ij} :  \{i,j\} \neq \{2,3\}, \{1,5\})(-E_{13} -E_{24}-E_{45})) = 0.
\end{multline}
\end{proposition}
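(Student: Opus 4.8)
The plan is to derive both vanishings from the general vanishing theorem \ref{gvt}, applied on $Y = \hat{\PP}^2(p_1,p_2,p_3,p_4)$, so that $m = 4$, with the ten lines $E_{ij}$ serving as the normal crossing configuration $D_1,\dots,D_N$ (they do form a normal crossing divisor, since each of the $15$ intersection points lies on exactly two of the lines). I set $B := E_{13}+E_{24}+E_{45}$, so that $K_Y+\mathcal{L}_\psi \equiv -B$, and take $\mu = 3$ with $D_1,D_2,D_3 = E_{13},E_{24},E_{45}$. In both cases I choose $l = r = k$ and $A = 0$ (empty twisting divisor $D = A-B = -B$); then assumptions (2) and (3) of Theorem \ref{gvt} are vacuous, and assumption (1), $H^2 = 0$, is automatic by Remark \ref{h2=0}. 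Everything then reduces to checking assumption (4).

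First I would link the two sheaves. Reintroducing the log poles along $E_{23}$ and $E_{15}$ gives the residue sequence
$$0 \to \Omega_Y^1(\log E_{ij}: \{i,j\}\neq\{2,3\},\{1,5\})(-B) \to \Omega_Y^1(\log E_{ij})(-B) \to \mathcal{O}_{E_{23}}(-B)\oplus\mathcal{O}_{E_{15}}(-B) \to 0.$$
Since $E_{23}\cdot B = E_{45}\cdot E_{23} = 1$ and $E_{15}\cdot B = E_{24}\cdot E_{15} = 1$, both quotient summands are isomorphic to $\mathcal{O}_{\PP^1}(-1)$, which has no cohomology; hence the inclusion induces isomorphisms on all cohomology and the two groups in the statement coincide. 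It therefore suffices to prove the vanishing for one of them.

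To finish I would verify assumption (4) of Theorem \ref{gvt} for the sheaf carrying all ten log poles ($k = 10$). The correction term $R$ is a sum, over the lines $D_i$ not contained in $B$ that meet $B$, of $(D_i\cdot B - 1)$; a direct intersection count (the meeting lines are $E_{12},E_{15},E_{23},E_{25},E_{35}$) shows each such line meets $B$ in exactly one component, so $D_i\cdot B = 1$ and $R = 0$. The required bound is then $m+1-\mu+R = 4+1-3+0 = 2$. The lines disjoint from every component of $B$ are precisely $E_{14}$ and $E_{34}$, and these two disjoint $(-1)$-curves are linearly independent in $\Pic(Y)$, so the relevant rank equals $2$. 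Thus assumption (4) holds with equality, Theorem \ref{gvt} yields $H^1 = 0$ for the ten-pole sheaf, and by the residue sequence above the eight-pole sheaf vanishes as well.

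The argument is mechanical once the dictionary with Theorem \ref{gvt} is set up; the only delicate point is that assumption (4) holds with no slack ($\rk = 2 = m+1-\mu+R$), so the intersection bookkeeping — identifying $E_{14},E_{34}$ as exactly the lines disjoint from $B$ and checking $R = 0$ — must be carried out precisely. I expect this to be the main, if modest, obstacle.
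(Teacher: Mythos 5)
Your proposal is correct and follows essentially the same route as the paper: both reduce to assumption (4) of Theorem \ref{gvt} with $B = E_{13}+E_{24}+E_{45}$, $\mu=3$, $A=0$, identifying $\{E_{14},E_{34}\}$ as the lines disjoint from $B$ (rank $2 = m+1-\mu$) and checking $R=0$ since every other line meets $B$ in exactly one point. The only cosmetic difference is that you deduce the eight-pole case from the ten-pole case via the residue sequence along $E_{23}$ and $E_{15}$ (both quotients being $\mathcal{O}_{\PP^1}(-1)$), whereas the paper simply observes that condition (4) reads identically for both sheaves and verifies it once for each.
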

\begin{proof}

We have to verify in both cases the assumptions of theorem \ref{gvt}. Note that (1) is automatically satisfied by remark \ref{h2=0} and (2), (3) are empty.  Observe that the sets
\begin{multline}
\{E_{ij} : \{i,j\} \subset \{1,2,3,4,5\}, E_{ij}E_{13} =E_{ij}E_{24}=E_{ij}E_{45}= 0\} = \\
= \{E_{ij} : \{i,j\}\neq \{2,3\}, \{1,5\}, E_{ij}E_{13} =E_{ij}E_{24}=E_{ij}E_{45}= 0\} = \{E_{14}, E_{34}\}.
\end{multline}

have rank 2, hence (4) is satisfied, since, for all $E_{ij}$ not a component of $B$,   $E_{ij} B \leq 1$ ($B$ consists of a reducible fibre plus a section of the pencil 
$X_5$).

This proves the claim.

\end{proof}

In case (2) we have $K_Y + \mathcal{L}_{\psi} \equiv -X_5$. Since $\lambda_i = 0$ for all $i \in \{1,2,3,4\}$, if for the strict transform of a line $L$ we have $\psi(L) = -1$, wlog $\psi(E_{14}) = a_1 = n -1$, we infer  then that $a_2, a_3, a_5, [-(a_1 + \ldots +a_5)] = 0$, $a_4 = 1$. 
In this case we have then
$$
 \{D_{\sigma} : \psi(\sigma) =  -1\} = \{E_{14}, E_{25}, E_{35}\}.
$$
 
If instead for all lines $L$ we have $\psi(L) \neq  -1$ wlog we may assume (since $S(a_1, \cdots, a_5) = 2n$)  that 
$$
\{D_{\sigma} : \psi(\sigma) =  -1\} \subset \{ E_{25}, E_{35}\}.
$$

Since $- X_5 \equiv - B : = - (E_{13} +E_{24})$, in both cases  assumptions (1)-(3) of theorem \ref{gvt} are satisfied.  There remains to show (4). It is
first of all  immediate
to see that $R=0$ ($B$ is a reducible fibre of $X_5$).

Observe that
$$
\{E_{ij} : \{i,j\} \subset \{1,2,3,4,5\}, E_{ij}E_{13} =E_{ij}E_{24}= 0\} =  \{E_{14}, E_{34}, E_{12}, E_{23}\}.
$$
This set has rank 3, and  if we remove from this set its intersection with $\{ E_{25}, E_{35}\}$, respectively $\{E_{14}, E_{25}, E_{35}\}$,
we obtain either the full set or  the set
$ \{E_{34}, E_{12}, E_{23}\}$, which 
has rank 3. Whence (4) is satisfied in both cases.

\subsection{{\color{darkgreen}$F(a_1, \ldots , a_5) = 2n$}}
 Consider the first subcase which, up to symmetry of the four blown up points, can be written as 

\begin{itemize}
\item[(3)] $K_Y + \mathcal{L}_{\psi} \equiv -L - E_4 + E_1 + E_2 +E_3  \equiv -E_{j4} -E_{45} + E_{j5},$
\end{itemize}
for each  $ j \in  \{1,2,3\}$. 

 Since $\lambda_4=2n$, $\la_1 = \la_2 = \la_3 = 0$,  we have 
$\psi(E_{23}) + \psi(E_{13}) + \psi(E_{12}) = 2n$ and $\psi(E_{14}) = \psi(E_{24})=\psi(E_{34}) = 0$ (since $F = 2n$). I.e., $a_1=a_2=a_3=0$, $\psi(E_{45}) = 0$. Moreover, $[a_4]+[a_5]+[-(a_1+ \ldots + a_5)]  = 2n$. Therefore we have the following possibilities:
\begin{itemize}
\item[i)] $[a_4], [a_5], [-(a_1+ \ldots + a_5)] = [-(a_4+a_5)] \neq -1$, hence $\psi(D_{\sigma}) \neq -1$ for all $\sigma$;
\item[ii)] wlog  $[a_5],  [-(a_4+a_5)] \neq -1$, $[a_4] =-1$; hence $\{D_{\sigma} : \psi(\sigma) =  -1\} = \{E_{23}, E_{15}\}$;
\item[iii)] wlog $[-(a_4+a_5)] \neq -1$, $[a_4] = [a_5]=-1$; hence $\{D_{\sigma} : \psi(\sigma) =  -1\} = \{E_{23}, E_{13},E_{15}, E_{25}\}$.
\end{itemize}

Observe that  cases ii), iii) occur in particular for $n=5$:
\begin{itemize}
\item[ii)] $\psi=(0,0,0,4,3,3)$,
\item[iii)]  $\psi=(0,0,0,4,4,2)$.
\end{itemize}

Since for $n=5$ the Hirzebruch-Kummer covering is a ball quotient, and ball quotients are infinitesimally rigid by \cite{CalabiV}, p. 500.
\begin{multline}
h^1(Y, \Omega_Y^1(\log D_{\sigma} : D_{\sigma} \neq E_{23}, E_{13},E_{15}, E_{25})(-E_{j4} -E_{45} + E_{j5})) = \\
= h^1(Y, \Omega_Y^1(\log D_{\sigma} : D_{\sigma} \neq E_{23}, E_{15})(-E_{j4} -E_{45} + E_{j5})) = 0.
\end{multline}

For i) consider instead the exact sequence
\begin{multline}
0 \rightarrow \mathcal{F}' :=  \Omega_Y^1(\log D_{\sigma} : D_{\sigma} \neq E_{23}, E_{15})(-E_{j4} -E_{45} + E_{j5}) \rightarrow \mathcal{F} :=\\
= \Omega_Y^1(\log D_{\sigma} : D_{\sigma} \neq 0)(-E_{j4} -E_{45} + E_{j5})\rightarrow \\
\rightarrow\mathcal{O}_{E_{23}}(-E_{j4} -E_{45} + E_{j5}) \oplus  \mathcal{O}_{E_{15}}(-E_{j4} -E_{45} + E_{j5}) \cong   \mathcal{O}_{\PP^1}(-1) \oplus \mathcal{O}_{\PP^1}(-1)\rightarrow 0.
\end{multline}

Since $h^1(\mathcal{F}') = 0$ (by ii)), it follows that $h^1(\mathcal{F}) = 0$, and we have proven case i). 

The next case is:
\begin{itemize}
\item[(4)] $K_Y + \mathcal{L}_{\psi} \equiv -L \equiv -X_i -E_{i5}$, $ \forall i \in\{1,2,3,4\}$.
\end{itemize}
Here $\la_i = n \ \forall i \Rightarrow S (a_1, \dots, a_5) = 0$ hence by Remark \ref{s=0}, we have $a_1 = a_4$, $a_2 = a_5$, $a_3 = [-(a_1+a_2)]$ and $[a_1] + [a_2]+[a_3] = n$. This implies that $\psi(E_{i5})= 0$ for all $i$ and we have two cases:
\begin{itemize}
\item[i)] $[a_1], [a_2], [a_3]  \neq -1$, hence $\psi(D_{\sigma}) \neq -1$ for all $\sigma$;
\item[ii)] wlog $[a_1] =-1$; $[a_2],  [a_3] \neq -1$, hence $\{D_{\sigma} : \psi(\sigma) =  -1\} = \{E_{23},E_{14}\}$.
\end{itemize}

These cases  occur in particular  for $n=5$:
\begin{itemize}
\item[i)] $\psi=(1,2,3,1,2)$,
\item[ii)]  $\psi=(4,1,0,4,1)$.
\end{itemize}

Therefore, by the same argument as above, we get
\begin{multline}
h^1(Y, \Omega_Y^1(\log D_{\sigma} : D_{\sigma} \neq 0)(-X_i -E_{i5})) = \\
= h^1(Y, \Omega_Y^1(\log D_{\sigma} : D_{\sigma} \neq E_{23},E_{14})(-X_i -E_{i5})) = 0.
\end{multline}

Consider the case
\begin{itemize}
\item[(5)] $K_Y + \mathcal{L}_{\psi} \equiv - X_i$ for some $i \in \{1,2,3,4\}$.
\end{itemize}
Wlog we can assume $i=1$, i.e., $\lambda_1 =0$ and $\lambda_j=n$ for $j \neq 1$. Then 
$$
K_Y + \mathcal{L}_{\psi} \equiv -X_1 \equiv -E_{23}-E_{45}  \equiv -E_{24}-E_{35} \equiv -E_{34}-E_{25}.
$$
According to theorem \ref{gvt} it suffices  to show (since then $A=0, R=0$):
\begin{itemize}
\item there is a decomposition $X_1 = D_1 + D_2$, such that $\psi(D_1), \psi(D_2) \neq n-1$,
\item $M:= \{D_{\sigma} : \psi(\sigma) \neq n-1, D_{\sigma}D_1 = D_{\sigma}D_2 = 0 \}$ has rank 3.
\end{itemize}

Since $\lambda_1=0$, we have (assume as always $ 0 \leq a_i < n$) $a_2+a_3+a_4 <n$. If one of the $a_2,a_3,a_4$ equals $n-1$, say $a_4 = n-1$, then $a_2=a_3=0$. In 
any case, $\psi(E_{34}), \psi(E_{24}) \neq n-1$.

Since $S= n$ there exists at most one $i \in \{1,2,3,4\}$ such that $\psi(E_{i5}) = n-1$. This implies that either $\psi(E_{25}) \neq n-1$ or $\psi(E_{15}) \neq n-1$, hence 
$(D_1,D_2) = (E_{34},E_{25})$ or $(D_1,D_2) = (E_{24},E_{15})$ satisfy the first condition above. We can assume wlog that $\psi(E_{34}), \psi(E_{25}) \neq n-1$.

If $\psi(E_{24}), \psi(E_{35}), \psi(E_{23}), \psi(E_{45})  \neq n-1$, we are done since $\{E_{24}, E_{35}, E_{23}, E_{45} \} \subset M$, which implies that $M$ has rank 3.

We  show now  that it suffices to prove that   the case $\psi(E_{23}) = a_4 = n-1 = \psi(E_{45}) = n-1$ cannot occur. Since,  if $\psi(E_{23}) \neq  n-1$ and $\psi(E_{45}) = n-1$, then the set $\{E_{24}, E_{35}, E_{23} \} \subset M$, which has rank three.  If instead   $\psi(E_{23}) = a_4 = n-1$ and $\psi(E_{45}) \neq n-1$, we have already remarked 
that $a_2=a_3 =0$. Since $\lambda_3 = n$,  $\psi(E_{35}) = n-1$ if and only if $a_1  +a_6 = 2n-1$, which is not possible. Hence $\{E_{24}, E_{35},  E_{45} \} \subset M$.

Let's show that   $\psi(E_{23}) = a_4 = n-1=\psi(E_{45}) = n-1$ cannot occur. Otherwise we have $a_4 + a_5 + a_6 = \la_4 + [\psi (E_{45}) ]  = 2n-1$, whence $a_5+a_6 = n$. $F=2n$ implies $a_1 =1$. On the other hand, since $\lambda_2 = \lambda_3 =n$ we have  $a_1 + a_5 \geq n$ and $a_1 + a_6 \geq n$, a contradiction.

The next case is:
\begin{itemize}
\item[(6)] $K_Y + \mathcal{L}_{\psi} \equiv -L + E_i + E_j$ for $i \neq j \in \{1,2,3,4\}$.
\end{itemize}
Wlog $i=1, j=2$, i.e., $K_Y + \mathcal{L}_{\psi} \equiv -L + E_1 + E_2  \equiv -E_{34}$.

Assume that  $\psi(E_{34}) = a_3 = n-1$: then (since $\lambda _1 = \lambda_2=0$) $a_2=a_4=a_1=a_5 = 0$, which contradicts $F(a_1, \dots, a_5) =2n$. Therefore $\psi(E_{34}) \neq n-1$ and by theorem \ref{gvt} it suffices  to show: $M:=\{D_{\sigma} : \psi(\sigma) \neq n-1, D_{\sigma}E_{34} = 0 \}$ has rank at least 4.

Note that:
\begin{itemize}
\item $F=2n$ $\implies$ at most two of the $E_{ij}$'s, $\{i,j\} \subset \{1,2,3,4\}$ have $\psi(E_{ij}) = n-1$;
\item $S=2n$ $\implies$ at most two of the $E_{i5}$'s, $i \in \{1,2,3,4\}$ have $\psi(E_{i5}) = n-1$.
\end{itemize}

Assume that there is one of the strict transforms of the lines orthogonal to $E_{34}$ which has $\psi=n-1$,  wlog  $\psi(E_{14}) = a_1 = n-1$. Then $\lambda_2 = 0$ implies $a_3=a_5 = 0$ and $\psi(E_{25}) = n-1$. In particular, $\psi(E_{13}) =a_5 \neq n-1$.
Moreover, $n+\psi(E_{45})=\la_4 + \psi(E_{45}) = a_4+a_5+a_6 =a_4+a_6 \leq 2n-2$, hence $\psi(E_{45}) \neq n-1$.

Hence $\{E_{13}, E_{45}\} \subset M$.

Assume that also $\psi(E_{24}) = a_2 =n-1$. Then $a_4=a_3=0$, since $\la_1 =0$. But then, since $\la_4=n$,  $n \leq  a_4+a_5+a_6 =a_6$, a contradiction. Hence $\psi(E_{24}) \neq n-1$.

Assume that  $\psi(E_{23}) = a_4 =n-1$. Then $a_2=a_3=0$ and $a_6=2$ and this implies that $\psi (E_{35}) \neq n-1$. Therefore
$$
\{E_{13}, E_{45}, E_{35}, E_{24}\} \subset M,
$$
hence $\rk M \geq 4$.

If instead $\psi(E_{23}) \neq n-1$ (and $\psi(E_{14})  = n-1$), then
$$
\{E_{13}, E_{45}, E_{23}, E_{24}\} \subset M,
$$
and again $\rk M \geq 4$.

We can therefore assume that for all the strict transforms of the lines orthogonal to $E_{34}$: $\psi \neq n-1$, i.e.

$$
\{E_{14}, E_{23}, E_{13}, E_{24}\} \subset M.
$$
We need to show that either $\psi(E_{35}) \neq n-1$ or $\psi(E_{45}) \neq n-1$, then we are done.

Assume that $\psi(E_{35})=\psi(E_{45})=  n-1$. Since $S=2n$, this implies that $\psi(E_{15})+\psi(E_{25}) =2$. However
 $\psi(E_{15})+\psi(E_{25}) = a_2 + a_3 + a_4 + a_1 + a_3 + a_5 = F(a_1, \dots, a_5) + a_3 - a_5 \geq n+1$, a contradiction.

The last case in this subsection is
\begin{itemize}
\item[(7)] $K_Y + \mathcal{L}_{\psi} \equiv -L + E_i + E_j+E_k$ for $i \neq j \neq k \in \{1,2,3,4\}$.
\end{itemize}

We have $F = 2n$, $S=3n$ and 
wlog $\lambda_4 = n$, $\lambda_1 = \lambda_2 = \lambda_3 =0$. Then 
$$
K_Y + \mathcal{L}_{\psi} \equiv -L + E_1 + E_2+E_3 \equiv  -E_{i4} +E_{i5}
$$ 
for all $i \in \{1,2,3\}$. 

We set $A:= E_{i5}$, $B:= E_{i4}$ and we want to apply theorem \ref{gvt}, whence we have to verify that there is an $i$ such that $\psi(E_{i4}) \neq n-1$ and that 
assumptions (2)-(4) of theorem \ref{gvt} are satisfied.

We have for all $i  \in \{1,2,3\}$ that $E_{i5}(E_{i5}-E_{i4}) = -1$, i.e., (2) holds. Since $\lambda_1 = \lambda_2=\lambda_3 = 0$, it follows that $\psi(E_{j4}) = a_j \neq n-1$ for all $j  \in \{1,2,3\}$, otherwise if e.g. $a_1=n-1$, then $a_3=a_5=a_2=a_6=0$, contradicting $F(a_1, \dots, a_5)=2n$. 

 (3) is now satisfied since $E_{j4}E_{i5} =1$ for $i \neq j$, hence
$$
E_{i5}((\sum_{\sigma : \psi(\sigma) \neq n-1}^{D_{\sigma} \neq E_{i5}} D_{\sigma})  +E_{i5}-E_{i4}) = (\sum_{\sigma : \psi(\sigma) \neq n-1}^{D_{\sigma} \neq E_{i5}}D_{\sigma})E_{i5} -1 \geq 2-1 = 1.
$$

It remains to show (4), i.e. $M_i:=\{D_{\sigma} : \psi(\sigma) \neq n-1, D_{\sigma}E_{i4} = 0 \} \cup \{E_{i5} \}$ has rank 4 for some $i  \in \{1,2,3\}$.

Observe that since $F=2n$, at most two of the $E_{ij}$, $\{i,j\} \subset \{1,2,3\}$ have $\psi(E_{ij}) = n-1$. Assume wlog $\psi(E_{23}) = a_4 = \psi(E_{13}) = a_5 =n-1$. Since $\lambda_1 =\lambda_2 = 0$, this implies that $a_1 = a_3 = a_2 = 0$, whence $a_6 = 2$. But this contradicts $\lambda_4 = n$.

Assume now that for exactly one  of the $E_{ij}$, $\{i,j\} \subset \{1,2,3\}$ have $\psi(E_{ij}) = n-1$,   wlog $\psi(E_{23}) =a_4 = n-1$. Then, since $\lambda_1 =0$, $a_2 = a_3 = 0$ and $\psi(E_{15}) = n-1$.

If $\psi(E_{i5}) \neq n-1$ for $i \neq 1$, then we are done since e.g.
$\{E_{14},  E_{34}, E_{25},  E_{45} \} \subset M_2$.

Suppose that also $\psi(E_{45}) = n-1$. Then (since $\lambda_4 = n$): 
$$
2n-1 = a_4+a_5+a_6 = n-1 + a_5+a_6 \ \implies \ a_5 + a_6 = n.
$$

$F=2n$ implies that $a_1 = 1$, in particular $\psi(E_{25})$ or  $\psi(E_{35}) \neq n-1$. If $\psi(E_{35})\neq n-1$,  write $K_Y + \mathcal{L} = E_{25}-E_{24}$
and observe that 
$$
\{E_{14},  E_{34}, E_{35},  E_{12} \} \subset M_2.
$$
Therefore we may assume that  $\psi(E_{ij}) \neq n-1$ for all $\{i,j\} \subset \{1,2,3\}$. 

If for some $i \leq 3$ we have $\psi(E_{i5}) \neq n-1$ we are done since, if $(i,j,k)$ is a permutation of $(1,2,3)$,
$$
\{E_{24},  E_{34}, E_{ij},  E_{ik}, E_{i5} \} \subset M_i.
$$

If  $\psi(E_{45}) \neq n-1 $, then 
$$
\{E_{24},  E_{34}, E_{13},  E_{12}, E_{45} \} \subset M_1.
$$

We conclude as follows: the case that $\psi(E_{i5}) =  n-1$ for $i=1,2,3,4$ contradicts  $S = 3n$.

\subsection{{\color{darkgreen}$F(a_1, \ldots , a_5) = 3n$}}
Here the first case is:
\begin{itemize}
\item[(8)] $K_Y + \mathcal{L}_{\psi} \equiv -E_{i5}$ for $i  \in \{1,2,3,4\}$.
\end{itemize}

We have $F=3n$, $S=n$. Wlog $i=1$, then $\lambda_1 = 2n, \lambda_2 = \lambda_3 = \lambda_4 =n$. In particular, $\psi(E_{15}) \neq n-1$ by Remark \ref{lambda2n}.
Since $A=0$, $B$ is irreducible, to apply  Theorem \ref{gvt} it suffices  to verify that $M:=\{D_{\sigma} : \psi(\sigma) \neq n-1, D_{\sigma}E_{15} = 0 \}$ has rank 4.

Since $S= n$ we have that $\psi(E_{i5}) = n-1$ for at most one $i$. Assume wlog that $\psi(E_{25}) = n-1$. Since $\lambda_2 =n$ this implies 
$$
2n-1 = a_1 + a_3 +a_5 \ \implies \ a_2 +a_4+ a_6 = n+1.
$$
Since $\la_1 = 2n$, it  follows that $a_2 + a_3 +a_4 \geq 2n$, hence $a_2 +a_4 \geq n+1 \Rightarrow a_2 +a_4 = n+1, a_6 = 0, a_3 = n-1$.

Hence at least one of $a_1 , a_5$ is different from  $n-1$. Then either $\{E_{35},  E_{45}, E_{12},  E_{13} \} \subset M$
or $\{E_{35},  E_{45}, E_{12},  E_{14} \} \subset M$ and we are done, unless 
 $\psi(E_{i5}) \neq n-1$ for all i. 
 
 In this case  $E_{35},  E_{45}, E_{25} \in M$ and it suffices to show that it cannot happen that $\psi(E_{14}) = \psi(E_{12}) =\psi(E_{13}) =n-1$, i.e. 
 $a_1=a_6=a_5 = n-1$. But then (since $F=3n$) $a_2+a_3+a_4 = 3$ contradicting $\lambda_1 = 2n$.

\begin{remark}\label{intersection}
If $\lambda_i = 0$, then for the strict transforms of the three lines, say $L_1, L_2, L_3$, passing through $p_i$, we have $\psi(L_1)+ \psi(L_2) + \psi(L_3) \leq n-1$, hence at least two of these lines have $\psi(\ldots) \neq n-1$. Therefore  $E_{i5}(\sum_{\psi(\sigma) \neq -1, D_{\sigma} \neq E_{i5}} D_{\sigma}) \geq 2$.
\end{remark}

The next cases are:
\begin{itemize}
\item[(9)] $K_Y + \mathcal{L}_{\psi} \equiv E_{j5} - E_{i5}$, for $i \neq j  \in \{1,2,3,4\}$;
\item [(10)] $K_Y + \mathcal{L}_{\psi} \equiv 0$.
\end{itemize}

For case (9) we can assume wlog $j=1, i=2$, i.e. $K_Y + \mathcal{L}_{\psi} \equiv E_{15} - E_{25}$.

Observe  again that $\psi(E_{25}) \neq n-1$, since $\lambda_2 = 2n$. Moreover, 
(2), (3) of Theorem \ref{gvt} are satisfied, since $E_{15}(E_{15} - E_{25}) = -1$ and 

$$
E_{15}((\sum_{\sigma : \psi(\sigma) \neq n-1}^{D_{\sigma} \neq E_{15}} D_{\sigma})  +E_{15} -E_{25}) = (\sum_{\sigma : \psi(\sigma) \neq n-1}^{D_{\sigma} \neq E_{15}}  D_{\sigma})E_{15} -1 \geq 2-1 = 1.
$$
The above inequality follows by remark \ref{intersection}.

We need to show: $M:=\{D_{\sigma} : \psi(\sigma) \neq n-1, D_{\sigma}E_{25} = 0 \} \cup \{E_{15} \}$ has rank 4.  Observe that
$$
M \subset \{D_{\sigma} : D_{\sigma}E_{25} = 0 \}  = \{  E_{24},  E_{23}, E_{12},  E_{15}, E_{35}, E_{45}  \}.
$$
Assume that $\psi(E_{24})=a_2 = n-1$. Then, since $\lambda_1=0$, we have $a_3=a_4=0$. But this contradicts $\lambda_2 =2n$. The same argument for $E_{23}$ shows that $\psi(E_{24}), \psi(E_{23}) \neq n-1$.

Assume that $\psi(E_{34}) = a_3 = n-1$. Then again $\lambda_1 = 0$ implies that $a_2 = a_4 = 0$. Since $\lambda_3 = \lambda_4 = n$ it follows that $\psi(E_{35}), \psi(E_{45}) \neq n-1$ and then 
$$
\{  E_{15},  E_{23}, E_{24},  E_{35}, E_{45}  \}\subset M,
$$
which implies that $\rk M = 4$.

Therefore we can assume that $\psi(E_{34}) \neq n-1$. Assume now that $\psi(E_{12}) = a_6 = n-1$. Then:
\begin{itemize}
\item $\psi(E_{35}) =n-1$ $\implies$ $a_1 + a_2 = n$;
\item $\psi(E_{45}) =n-1$ $\implies$ $a_4 + a_5 = n$.
\end{itemize}
If both equalities occur,  $F=3n \Rightarrow a_3 = 1$, which contradicts $\lambda_2 = 2n$. Therefore either $\psi(E_{45}) \neq n-1$ or $\psi(E_{35}) \neq n-1$.
We can assume wlog $\psi(E_{45}) \neq n-1$. Then:
$$
\{  E_{15},  E_{23}, E_{24},  E_{45}  \}\subset M,
$$
which implies that $\rk M = 4$.

Therefore we can assume that  $\psi(E_{12}) = a_6 \neq n-1$ and we are done since $\{  E_{15},  E_{23}, E_{24},  E_{12}  \}\subset M$.

For case (10) we are done by Theorem \ref{gvt}, since by  Proposition \ref{rank} it holds $\rk\{D_{\sigma} : \psi(\sigma) \neq -1\} = 5$.

In this subsection we are left with the following two cases:

\begin{itemize}
\item[(11)] $K_Y + \mathcal{L}_{\psi} \equiv E_{j5} + E_{i5}-E_{k5}$, for $i \neq j \neq k  \in \{1,2,3,4\}$;
\item [(12)] $K_Y + \mathcal{L}_{\psi} \equiv E_{i5}$ for $i  \in \{1,2,3,4\}$.
\end{itemize}

In case (11) we can assume wlog that $K_Y + \mathcal{L}_{\psi} \equiv E_{15} + E_{25}-E_{35}$. We have $F=S=3n$, $\lambda_4 =  n$, $\lambda_1 = \lambda_2 = 0$ and $\lambda_3 = 2n$.

Observe that again $\psi(E_{35}) \neq n-1$ since $\lambda_3 = 2n$. Moreover, 
(2), (3) of Theorem \ref{gvt} are satisfied, since $E_{i5}(E_{15}+E_{25} - E_{35}) = -1$  for $i=1,2$ and (by Remark \ref{intersection})

$$
E_{i5}((\sum_{\sigma : \psi(\sigma) \neq n-1}^{D_{\sigma} \neq E_{15}, E_{25}} D_{\sigma})  +E_{15}+E_{25} - E_{35}) = (\sum_{\sigma : \psi(\sigma) \neq n-1}^{D_{\sigma} \neq E_{15}, E_{25}}  D_{\sigma})E_{i5} -1 \geq 2-1 = 1.
$$
We need to show that $M:=\{D_{\sigma} : \psi(\sigma) \neq n-1, D_{\sigma}E_{35} = 0 \} \cup \{E_{15}, E_{25} \}$ has rank 4.  Observe that
$$
M \subset \{D_{\sigma} :D_{\sigma}E_{35} = 0  \} \cup \{E_{15}, E_{25}  \} = \{  E_{15},  E_{25}, E_{45},  E_{34}, E_{23}, E_{13}  \},
$$
and $\psi(E_{35}) \neq n-1$.

Since $\lambda_1 = \lambda_2 = 0$ we see that $a_3 = \psi(E_{34}) \neq n-1$ (else $a_2=a_4 = a_1 = a_5=0$, contradicting $F = 3n$); moreover  $a_4 = \psi(E_{23}) \neq n-1$ and $a_5 = \psi(E_{13}) \neq n-1$. In fact $a_4 = n-1$ implies  $a_2 = a_3 =0$, contradicting $\la_3  = 2n$, similarly $a_5 = n-1$  contradicts $\la_3  = 2n$. 

Hence 
$\{  E_{15},  E_{25}, E_{34},  E_{23} ,   E_{13}  \}\subset M$ and we are done.

Consider the next case (12), where we can apply again remark \ref{intersection} to infer that property (3) is satisfied; if $\rk\{D_{\sigma} : \psi(\sigma) \neq -1\} \cup \{E_{i5}\}= 5$,  we can apply Theorem \ref{gvt}. If instead $\rk\{D_{\sigma} : \psi(\sigma) \neq -1\}  \cup \{E_{i5}\}< 5$, then wlog we can assume that $i=4$ and it is easy to see that $\psi =(n-1,n-1,n-1,1,1)$. 
Since this case occurs for $n=5$,  we are done.

\subsection{{\color{darkgreen}$F(a_1, \ldots , a_5) = 4n$}}
The first case is:
\begin{itemize}
\item[(13)] $K_Y + \mathcal{L}_{\psi} \equiv L - E_1 -E_2-E_3-E_4 \equiv  E_{34}-E_{35} -E_{45}$.
\end{itemize}

Here the divisors appearing in $A$ and $B$ are disjoint,  conditions (1)-(2) of Theorem \ref{gvt} are fulfilled. Moreover, since $\lambda_i =2n$ for all i, we have $\psi(E_{i5}) \neq n-1$ for all $i \in \{1,2,3,4\}$, hence also condition (3) is fulfilled.

We have, for $i=1,2,3$,  $K_Y + \mathcal{L}_{\psi} \equiv E_{i4}-E_{i5} -E_{45} = E_{34}-E_{35} -E_{45}$. 

Then
$$
\{E_{ij} : E_{ij}.E_{35} = E_{ij}.E_{45} = 0\} = \{E_{15}, E_{25}, E_{34} \}.
$$
Note that this set has rank 3 and is equal to $\{D_{\sigma} : \psi(\sigma) \neq n-1, D_{\sigma}E_{35} = D_{\sigma}E_{45} = 0 \} \cup \{E_{34} \}$. 

Therefore we are done if we can  show that $R=0$ (cf. Theorem \ref{gvt}, (4)) for some choice of $ i \in \{1,2,3\}$.
The only case that does not work is the case where $\psi(E_{ij}) \neq n-1$ for all $i,j  \in \{1,2,3\}$. To handle this case we consider: 
\begin{itemize}
\item $\mathcal{F'} := \Omega_Y^1( \log E_{ij}: (i,j) \neq (1,2) , \ \psi(E_{ij} )\neq n-1)(E_{34}-E_{35} -E_{45})$,
\item $\mathcal{F} := \Omega_Y^1( \log E_{ij}, \psi(E_{ij} ) \neq n-1)(E_{34}-E_{35} -E_{45})$.
\end{itemize}

We have the exact sequence 
$$
0 \rightarrow \mathcal{F'} \rightarrow \mathcal{F} \rightarrow \hol_{E_{12}}(E_{34}-E_{35} -E_{45}) \cong \hol_{\PP^1}(-1) \rightarrow 0.
$$
Therefore it suffices to show that $H^1(Y, \mathcal{F'}) = 0$, which is true since the assumption (4) of Theorem \ref{gvt} is satisfied for $\mathcal{F'}$.

Consider case
\begin{itemize}
\item[(14)] $K_Y + \mathcal{L}_{\psi} \equiv L  -E_i-E_j-E_k$ for $i\neq j \neq k  \in \{1,2,3,4\}$.
\end{itemize}
Wlog we can assume $(i,j,k) = (1,2,3)$, hence 
$$
K_Y + \mathcal{L}_{\psi} \equiv E_{i4} -E_{i5}, \ i \in \{1,2,3\}.
$$

Since $\lambda_i=2n$ for $i=1,2,3$, we have $\psi(E_{i5}) \neq n-1$ for $i=1,2,3$. We easily see that it suffices  to verify  condition (4) of Theorem \ref{gvt}. But since $\lambda_4 = n$ we have that $a_4 + a_5 + a_6 <2n$ whence we can assume wlog $a_4 \neq n-1$ and choose $i=3$ in the decomposition $A-B$. Therefore
$$
E_{25}, E_{15}, E_{23} , E_{34} \in \{D_{\sigma} : \psi(\sigma) \neq n-1, D_{\sigma}E_{35} = 0 \} \cup \{E_{34} \},
$$
and we are done.

In case 
\begin{itemize}
\item[(15)] $K_Y + \mathcal{L}_{\psi} \equiv L -E_i - E_j$, for $i \neq j  \in \{1,2,3,4\}$
\end{itemize}
wlog $K_Y + \mathcal{L}_{\psi} \equiv E_{34} = A$.  Conditions (1), (2) of  Theorem \ref{gvt}
are obviously true ($B=0$), condition (3) is satisfied since $\psi (E_{15}), \psi (E_{15}) \neq n-1$ (as $\la_1, \la_2 = 2n$).

 We are done if $\rk\{D_{\sigma} : \psi(\sigma) \neq -1\} = 5$. 
If instead $\rk\{D_{\sigma} : \psi(\sigma) \neq -1\} < 5$, by the proof of proposition \ref{rank} we have that  $\psi =(n-1,n-1,3,n-1,n-1)$; since this case occurs for $n=5$
we are done.

The last case of this subsection is

\begin{itemize}
\item [(16)] $K_Y + \mathcal{L}_{\psi} \equiv L-E_i \equiv X_i$ for $i  \in \{1,2,3,4\}$.
\end{itemize}

Here  $\rk\{D_{\sigma} : \psi(\sigma) \neq -1\} = 5$ by proposition \ref{rank} since $X_i$ is not irreducible. We now  verify the assumptions of Theorem \ref{gvt}. Wlog 
$i=1$ and $K_Y + \mathcal{L}_{\psi}  \equiv E_{34} +E_{25}$. (1) and (2) are clear and (3) is verified for $E_{34}$ since $\psi(E_{15}) \neq n-1$ and $E_{34}E_{15} = 1$.
If condition (3) is not verified for $E_{25}$, this means that $\psi = n-1$ for all lines through the point $P_2$. We can however vary the decomposition $X_1 \equiv 
E_{24} +E_{35} \equiv E_{23} +E_{45}$, and if for each (3) does not hold for $E_{35}$, respectively $E_{45}$, the value of $\psi$ equals $n-1$ for all the lines,
contradicting $F = 4n$ (as $n \geq 4$).

\subsection{{\color{darkgreen}$F(a_1, \ldots , a_5) = 5n$}}
Here we have
\begin{itemize}
\item [(17)] $K_Y + \mathcal{L}_{\psi} \equiv 2L- E_1-E_2-E_3-E_4 \equiv X_5$.
\end{itemize}
We know that $\rk\{D_{\sigma} : \psi(\sigma) \neq -1\} = 5$, and $K_Y + \mathcal{L}_{\psi} \equiv E_{14}+E_{23}$. Also here we verify the assumptions of Theorem \ref{gvt}: (1) and (2) are clear and (3) is verified since  $\psi(E_{i5}) \neq -1$ for all $i$.

This concludes the proof of the Main Theorem \ref{main}.

\section{Proof of Theorem \ref{main} for $n = 4,6$}

The result follows from the following:
\begin{proposition}\label{346}
Let $S$ be the  Kummer covering of $Y$ branched in $D$ of exponent $n$, where $n \in \{3,4,6\}$ . Then 
\begin{enumerate}
\item $H^1(S, \Theta_S) = 0$ for $n=4,6$.
\item $H^1(S, \Theta_S) \neq 0$ for $n=3$.
\end{enumerate}

\end{proposition}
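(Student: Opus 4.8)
The plan is to reuse wholesale the reduction of the previous section: by Serre duality together with $h^0(S,\Theta_S)=0$ (valid since $K_S$ is ample for $n\ge 3$), it suffices to prove, for every character $\psi\in((\ZZ/n\ZZ)^5)^*$, that $h^1(\mathcal{F}_\psi)=0$, where $\mathcal{F}_\psi:=\Omega_Y^1(\log D_\sigma:\psi(\sigma)\ne -1)(K_Y+\mathcal{L}_\psi)$ and $h^2(\mathcal{F}_\psi)=0$ by Remark \ref{h2=0}; for part (2) it conversely suffices to produce a single $\psi$ with $h^1(\mathcal{F}_\psi)\ne 0$. Since $\mathcal{F}_\psi$ depends on $\psi$ only through the set $\{D_\sigma:\psi(\sigma)\ne -1\}$ and the class $K_Y+\mathcal{L}_\psi\in\Pic(Y)$, running the case-by-case analysis of the previous section with $n=4$ or $n=6$ leaves every application of Theorem \ref{gvt} intact except exactly where that section excluded a configuration ``for $n\ne 4$'' or ``for $n\ne 6$''. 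By Proposition \ref{rankcond} (2), (3) these exceptions are precisely the degenerate characters of Remark \ref{rank2} (2), (3), so the proof of (1) comes down to those three families.

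I would treat them as follows. For $n=6$ with $K_Y+\mathcal{L}_\psi\equiv X_i$, say $i=5$, the log divisor is $\sum_{j=1}^4 E_{j5}=\sum_{j=1}^4 E_j$ and $X_5\equiv 2L-\sum_j E_j$; the blow-up identity $\pi^*\Omega^1_{\PP^2}\cong\Omega^1_Y(\log\sum_j E_j)(-\sum_j E_j)$ then gives $\mathcal{F}_\psi\cong\pi^*(\Omega^1_{\PP^2}(2))$, so $h^0(\mathcal{F}_\psi)=h^0(\PP^2,\Omega^1_{\PP^2}(2))=3$, while Lemma \ref{chi} gives $\chi(\mathcal{F}_\psi)=X_5^2-5+\sum_{j=1}^4(1+E_{j5}\cdot X_5)=-5+8=3$; hence $h^1(\mathcal{F}_\psi)=h^0-\chi=0$. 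For $n=4$, case (b), where $K_Y+\mathcal{L}_\psi\equiv E_{ij}=:A$ and the log divisor is the triple $B_1,B_2,B_3$ of components meeting $A$, I would add $A$ to the log divisor through the residue sequence with cokernel $\mathcal{O}_A(A)\cong\mathcal{O}_{\PP^1}(-1)$ (as $A^2=-1$); this cokernel is acyclic, so $h^1(\mathcal{F}_\psi)=h^1(\Omega^1_Y(\log A,B_1,B_2,B_3)(A))$, and the right-hand sheaf is the standard-configuration sheaf already shown to be acyclic because it occurs for $n=5$, where $S$ is a ball quotient and hence infinitesimally rigid. For $n=4$, case (a), with $K_Y+\mathcal{L}_\psi\equiv A-B$ ($A$ a $(-1)$-curve with $A\cdot B=0$, and $A$ not among the five log curves), the same device adds $A$ with acyclic cokernel $\mathcal{O}_A(A-B)\cong\mathcal{O}_{\PP^1}(-1)$, reducing to a sheaf whose twist $A-B$ is now built from two of its logarithmic curves, so that Theorem \ref{gvt} applies; the one substantive check is hypothesis (4), which holds with equality, the log curves orthogonal to $B$ spanning a rank-$4$ sublattice of $\Pic(Y)$.

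For part (2) I would exhibit one non-vanishing eigenspace. Taking $n=3$ and $\psi=(2,2,2,1,1)$, a direct monodromy computation gives $\{D_\sigma:\psi(\sigma)\ne -1\}=\{E_{12},E_{13},E_{23},E_{45}\}$ and $K_Y+\mathcal{L}_\psi\equiv 0$, whence Lemma \ref{chi} yields $\chi(\mathcal{F}_\psi)=0-5+4=-1$. Because $h^2(\mathcal{F}_\psi)=0$, this forces $h^1(\mathcal{F}_\psi)=h^0(\mathcal{F}_\psi)-\chi(\mathcal{F}_\psi)\ge 1$, so $H^1(S,\Theta_S)^{-\psi}\ne 0$ and $S$ fails to be infinitesimally rigid for $n=3$.

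The step I expect to be hardest is the $n=6$ family $K_Y+\mathcal{L}_\psi\equiv X_i$: there the rank of the log divisor genuinely drops to $4$, the configuration does not arise for $n=5$, and neither Theorem \ref{gvt} nor the ball-quotient shortcut applies, so everything hinges on recognizing $\mathcal{F}_\psi$ as a pullback $\pi^*(\Omega^1_{\PP^2}(2))$ and computing its sections on $\PP^2$. A subtler bookkeeping point is to confirm that Proposition \ref{rankcond} (2), (3) really captures every character not already settled by the previous section, i.e. that for all non-degenerate $\psi$ the verifications there are genuinely insensitive to setting $n=4$ or $n=6$.
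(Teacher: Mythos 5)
Your proof is correct and follows the paper's own strategy almost exactly: reduce to the degenerate characters of Remark \ref{rank2}(2),(3) (accepting that the non-degenerate cases are insensitive to $n=4,6$), handle the $n=4$ case (b) by comparison with the acyclic $n=5$ eigensheaf for $\psi=(4,1,1,1,4)$ coming from the ball quotient, and for $n=3$ exhibit $\psi=(2,2,2,1,1)$ with $K_Y+\mathcal{L}_\psi\equiv 0$, $\chi(\mathcal{F}_\psi)=-1$ and $h^2=0$. The one place you genuinely diverge is the $n=6$ family $K_Y+\mathcal{L}_\psi\equiv X_i$: the paper disposes of it by a direct application of Theorem \ref{gvt}, which -- contrary to your closing remark -- does apply here, because hypothesis (4) is a rank condition on the whole set $\{D_1,\dots,D_k\}$ including the components of $A=E_{jk}+E_{lm}$, and $\{E_{15},\dots,E_{45},E_{14},E_{23}\}$ has rank $5$ even though the four log curves alone span only rank $4$ (conditions (2),(3) are also easily checked). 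Your substitute -- recognizing $\Omega^1_Y(\log E_{15},\dots,\log E_{45})(X_5)\cong\pi^*(\Omega^1_{\PP^2}(2))$ and verifying $h^0=\chi=3$ via the Euler sequence and Lemma \ref{chi} -- is nonetheless a clean and correct alternative, arguably more transparent than the rank bookkeeping. Likewise, your preliminary step in the $n=4$ case (a) of adjoining $A$ to the logarithmic divisor is unnecessary (Theorem \ref{gvt} is already set up to allow components of $A$ outside the log set, and the paper applies it directly), but it is harmless since the residue cokernel $\mathcal{O}_A(A-B)\cong\mathcal{O}_{\PP^1}(-1)$ is acyclic, and your verification of hypothesis (4) with equality of ranks is accurate.
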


\begin{proof}
For assertion 2) consider $\psi =(2,2,2,1,1)$. Then $K_Y+\mathcal{L}_{\psi} \equiv 0$. Moreover, 
$$
\{D_{\sigma} : \psi(\sigma) \neq -1\} = \{E_{23},E_{12}, E_{13}, E_{45}\}.
$$
Then 
$$
\pi_*(\Omega_S^1 \otimes \Omega_S^2)^{\psi} = \Omega_Y^1( \log E_{23}, \log E_{13}, \log E_{12}, \log E_{45}) =: \mathcal{F},
$$
and $\chi(\mathcal{F}) = -1$.

1) For $n=6$ we have to show that (cf. Remark \ref{rank2}, 2))
$$
H^1(Y, \Omega_Y^1( \log E_{15}, \log E_{25}, \log E_{35}, \log E_{45}) (E_{14}+E_{23})) =0.
$$

But this follows from Theorem \ref{gvt}.

For $n=4$, we need to show that (cf. Remark \ref{rank2}, 3))
\begin{itemize}
\item $H^1(Y, \Omega_Y^1( \log E_{14}, \log E_{12},\log E_{15}, \log E_{25}, \log E_{45}) (E_{35}-E_{15}))=0$;
\item $H^1(Y, \Omega_Y^1( \log E_{14}, \log E_{12},\log E_{15}, \log E_{25}, \log E_{45})  (E_{34}-E_{45}))=0$;
\item $H^1(Y, \Omega_Y^1( \log E_{23}, \log E_{34}, \log E_{24}) (E_{15})) =0$.
\end{itemize}

The  vanishing of the first two cohomology groups follows from Theorem \ref{gvt}. 

For the second case observe that for $n= 5$ and $\psi = (4,1,1,1,4)$ we have 
$$
\pi_*(\Omega_S^1 \otimes \Omega_S^2)^{\psi} = \Omega_Y^1(  \log E_{23}, \log E_{34}, \log E_{24}, \log E_{15}) (E_{15}) ) =: \mathcal{F}.
$$

We kow that $h^0(\mathcal{F}) = h^1(\mathcal{F})=0$ and since 

$$
 \Omega_Y^1( \log E_{23}, \log E_{34}, \log E_{24}) (E_{15}) \subset \mathcal{F},
$$
it follows that 
$$
h^0(Y, \Omega_Y^1( \log E_{23}, \log E_{34}, \log E_{24}) (E_{15})) = 0 = \chi( \Omega_Y^1( \log E_{23}, \log E_{34}, \log E_{24}) (E_{15})).
$$
This proves the claim.

\end{proof}

\section{Iterated Campedelli-Burniat type configurations}

Recall that the configuration of the complete quadrangle consists of four points which we can describe as  the vertices of
an equilateral triangle, $e_1 : = (1,0,0),  e_2 : = (0,1,0), e_3 : = (0,0,1)$, plus the barycentre $e_4 : = (1,1,1)$;
the six lines joining them pairwise can be described as the sides $L_i : =\{ x_i=0\}, i=1,2,3,$ of the triangle plus   the three medians
$\Lam_i = \{ x_j = x_k \}, \{ i,j,k\} = \{1,2,3\}$.

The configuration of the complete quadrangle yields a configuration of six  lines and seven points, once we add the three middle points of the sides
$$ e'_i  \in L_i , \ x_i = 0, \ x_j = x_k = 1.  $$ 
These in turn form an equilateral triangle with barycentre $e_4$, and we can define the contraction linear map as the unique projectivity satisfying
$$ A : \PP^2 \ra \PP^2, \ A(e_i ) = e'_i , \  A (e_4) = e_4 .$$ 
$A$ acts on the dual space of linear forms by
$$ A (x_i ) = - x_i + x_j + x_k, $$
hence, as one can easily verify,
$$  A (\Lam_i) = \Lam_i, \ \forall i=1,2,3. $$

\begin{definition}
We define inductively 
\begin{itemize}
\item
$e_i(n) : = A^n (e_i)$,
\item
$ L_i (n) : = A^n (L_i)$,
\item
$\sC_{CB} (0) : = ( \bigcup_1^3 L_i) \cup (\bigcup_1^3 \Lam_i)$,
\item
$\sC_{CB} (n) : = ( \bigcup_{ 0 \leq m \leq n, i=1,2,3} A^m  L_i) \cup (\bigcup_1^3 \Lam_i) = A (\sC_{CB} (n-1)) \cup \sC_{CB} (0)$, \item
$a_n := (1 - (-2)^{n+1}) = a_{n-1} + 3 (-2)^n$,
\item
$ 3 b_n : =| 1 - (-2)^n|= | a_{n-1}| = (-1)^{n+1} a_{n-1}$,
\item
$ c_n : = b_n + (-1)^n$.

\end{itemize}
We shall call $\sC_{CB} (n)$ the {\em nth iterated Campedelli-Burniat configuration}.
\end{definition}

We can write explicit formulae as follows, keeping in mind the $\mathfrak S_3$- invariance of the configurations. 

\begin{prop}
$$ L_i (n) = \{ a_n x_i + a_{n-1} (x_j + x_k) = 0\}.$$
$$ e_1(n) = ( c_n, b_n, b_n ) .$$
\end{prop}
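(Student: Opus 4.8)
The plan is to prove both formulae by induction on $n$, exploiting that the projectivity $A$ acts linearly and that everything in sight is organized by the $\mathfrak{S}_3$-symmetry fixing the chosen index. For the line formula I would track how $A$ moves the two-dimensional space of forms of the shape $\alpha x_i+\beta(x_j+x_k)$ (with $\{i,j,k\}=\{1,2,3\}$), and for the point formula I would deduce $e_1(n)$ from the already-proven line formula via incidence, rather than computing the matrix of $A$ on points directly.

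First I would establish the line formula. Since $L_i(n)=A^n(L_i)$ is the zero locus of $A^n(x_i)$ and $A$ is linear on forms, I compute the action of $A$ on a form $\alpha x_i+\beta(x_j+x_k)$. Using $A(x_i)=-x_i+x_j+x_k$ together with the two symmetric relations for $x_j,x_k$ one gets $A(x_j)+A(x_k)=2x_i$, hence
\[ A(\alpha x_i+\beta(x_j+x_k)) = (-\alpha+2\beta)x_i + \alpha(x_j+x_k). \]
Thus the coefficient pair $(\alpha,\beta)$ obeys the linear recursion $(\alpha,\beta)\mapsto(-\alpha+2\beta,\,\alpha)$. It then remains to check that $(a_n,a_{n-1})$ is carried to $(a_{n+1},a_n)$: the second coordinate is automatic, and the first amounts to the identity $a_{n+1}=-a_n+2a_{n-1}$, i.e. the two-term recurrence with characteristic roots $1$ and $-2$, which the closed form $a_n=1-(-2)^{n+1}$ satisfies. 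The base case is $n=0$, where $a_0x_i+a_{-1}(x_j+x_k)=3x_i$ cuts out $L_i=\{x_i=0\}$. This closes the induction and gives $L_i(n)=\{a_nx_i+a_{n-1}(x_j+x_k)=0\}$.

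For the point formula I would avoid the point-matrix of $A$ altogether by noting $e_1=L_2\cap L_3$. Since $A$ is a projectivity it preserves incidence, so $e_1(n)=A^n(L_2\cap L_3)=L_2(n)\cap L_3(n)$, and I can read the point off from the line formula. The two equations $a_{n-1}x_1+a_nx_2+a_{n-1}x_3=0$ and $a_{n-1}x_1+a_{n-1}x_2+a_nx_3=0$ are distinct (they would coincide only if $a_n=a_{n-1}$, which never occurs), and subtracting them forces $x_2=x_3$, matching the shape $(c_n,b_n,b_n)$. Substituting $x_1=c_n$, $x_2=x_3=b_n$ into the first equation gives $a_{n-1}c_n+(a_n+a_{n-1})b_n$; using $c_n=b_n+(-1)^n$ this equals $(a_n+2a_{n-1})b_n+(-1)^na_{n-1}$. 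Two clean identities finish the job: $a_n+2a_{n-1}=3$ (immediate from the closed forms, since the $(-2)^{\bullet}$ terms cancel) and $3b_n=(-1)^{n+1}a_{n-1}=-(-1)^na_{n-1}$ (the definition of $b_n$), whence the expression vanishes and $e_1(n)=(c_n,b_n,b_n)$.

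There is no deep obstacle here: both halves reduce to checking that an explicit sequence satisfies a linear recurrence, and the hard-looking geometry is absorbed into the incidence step $e_1=L_2\cap L_3$. The place demanding genuine care is the sign bookkeeping around $b_n$ and $c_n$, in particular reconciling the absolute value in $3b_n=|1-(-2)^n|$ with the signed expression $(-1)^{n+1}a_{n-1}$ (a short parity check, splitting into $n$ even and $n$ odd, shows they coincide for all $n\ge 0$). If instead one preferred to prove the point formula by iterating $A$ on points directly, the subtle point would be the relation between the given action of $A$ on forms and its action on points (the form-matrix equals $2$ times the inverse of the point-matrix); the intersection argument is preferable precisely because it sidesteps this compatibility.
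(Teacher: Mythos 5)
Your proof is correct. The first half (the line formula) is essentially the paper's own argument: the same induction, the same computation $A(\alpha x_i+\beta(x_j+x_k))=(-\alpha+2\beta)x_i+\alpha(x_j+x_k)$, and the same verification that $a_{n+1}=-a_n+2a_{n-1}$, which the closed form $a_n=1-(-2)^{n+1}$ satisfies. The second half genuinely diverges. The paper proves $e_1(n)=(c_n,b_n,b_n)$ by a direct induction on points: it applies the point-matrix of $A$ to get $A(c_n,b_n,b_n)=(2b_n,c_n+b_n,c_n+b_n)$ and then verifies the two recurrences $c_{n+1}=2b_n$ and $b_{n+1}=c_n+b_n$ by a somewhat delicate sign computation with $|1-(-2)^n|$. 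You instead observe $e_1=L_2\cap L_3$, use that a projectivity preserves incidence to get $e_1(n)=L_2(n)\cap L_3(n)$, and check that $(c_n,b_n,b_n)$ satisfies both linear forms via the identities $a_n+2a_{n-1}=3$ and $3b_n=(-1)^{n+1}a_{n-1}$ (the latter is part of the paper's definition of $b_n$, so your parity check is a confirmation rather than a needed step). Your route buys two things: you never need the action of $A$ on points (only the dual action on forms, which is what the paper states explicitly), and you replace the $(b_n,c_n)$ recurrences by two one-line identities in the $a_n$; the cost is that the point formula becomes logically dependent on the line formula, whereas the paper's two inductions are independent. Your check that the two lines are distinct (so the intersection is a single point) is the right thing to insist on; note also that $(c_n,b_n,b_n)\neq(0,0,0)$ for all $n$ since $c_0=1$ and $b_n\neq 0$ for $n\geq 1$, which completes the identification of the intersection point.
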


\begin{proof}
For $n=0$ the formulae are true, and inductively
$$ A ( a_n x_i + a_{n-1} (x_j + x_k) ) = a_n ( - x_i + x_j + x_k) +  a_{n-1} 2 x_i =$$ 
$$ = (-a_n + 2 a_{n-1} ) x_i +  a_{n} (x_j + x_k) =  (a_n + 3 (-2)^{n+1}) x_i +  a_{n} (x_j + x_k) .$$
Also, $$  A (e_1(n) = A ( c_n, b_n, b_n) = ( 2 b_n, c_n + b_n, c_n + b_n),$$
and it suffices to see that $c_{n+1} = 2 b_n $, $c_n + b_n = b_{n+1} $.

Indeed, $$3 (c_n + b_n)=  2 | 1 - (-2)^n| + 3 (-1)^n = (-1)^{n-1} ( 2 - 2 (-2)^n -3 )=  (-1)^{n-1} (-1 + (-2)^{n+1}) =$$
$$=  (-1)^n  (1 - (-2)^{n+1})  = 3 b_{n+1} ,$$
then clearly $$ 2 b_n = c_n + b_n - (-1)^n =  b_{n+1} + (-1)^{n+1} = c_{n+1} .$$
\end{proof}

To finish the description of the configuration, we determine the intersection points of the  lines of the configuration $\sC_{CB}(n)$.

\begin{itemize}
\item
$\Lam_i \cap \Lam_j = \{ e_4\}, \ i\neq j$
\item
$\Lam_1 \cap L_1(n) = \{ (-2 a_{n-1}, a_n, a_n)\} =  \{ (2 b_n, b_{n+1}, b_{n+1})\} = \{ e_1 (n)\} $
\item
$\Lam_2 \cap L_1(n) = \{ ( a_{n-1},  - (a_n + a_{n-1}), ,a_{n-1}) \}= \\
= \{ ( (-1)^{n+1} 3 b_n , -2 ( 1 - (-2)^{n-1}),  (-1)^{n+1} 3 b_n ) =
( 3 b_n , (-1)^n 2 ( 1 - (-2)^{n-1}),  3 b_n ) 
=(3 b_n ,   2 \cdot 3 \cdot b_{n-1}   ,  3 b_n) = (b_n, 2 b_{n-1} , b_n)$,
\item
$\Lam_1 \cap L_1 = \{ (0,1,-1)\}$,
\item
for $i \neq 1$ $\Lam_1 \cap L_i = \{ ( -a_{n-1}, , 0, a_n)\} = \{ (b_n, 0, , b_{n+1})\}$. 

\end{itemize}

Defining now $\hat{e}_1 : = (0,1,-1)$ and analogously $\hat{e}_i$, we have

\begin{prop}
The configuration $\sC_{CB} (n)$ consists of $ 3 (n+2)$ lines.
The points $e_i, i=1,2,3, 4$ are triple points, while the points $e_i(m), \ m \leq n$ are quadruple points,
the points $e_i(n+1)$ are double points; while through the points $\hat{e}_i$
pass $(n+1)$ lines of the configuration, finally the points $L_i(m) \cap L_j , \ m \leq n, i \neq j$
and their transforms under $A^i$, $ i + m \leq n$ are double points.

Hence, if $t_m$ is the number of vertices of the configuration with valency equal to $m$, we have
\[ t_2 = 3 n (n-1) + 3, t_3 = 4, t_4 = 3n, t_{n+1} = 3.\] 
\end{prop}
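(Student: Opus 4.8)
The plan is to exploit two structural features of $\sC_{CB}(n)$: the organization of its lines into four \emph{pencils}, and the $A$-equivariance that makes the whole incidence pattern self-similar. The line count is the easy part. The configuration is the union of the three medians $\Lambda_1,\Lambda_2,\Lambda_3$ with the $3(n+1)$ lines $L_i(m)$, $1\le i\le 3$, $0\le m\le n$. Since the defining equations $a_m x_i + a_{m-1}(x_j+x_k)=0$ have pairwise non-proportional coefficient vectors $(a_m,a_{m-1},a_{m-1})$ (the ratios $a_m:a_{m-1}$ being pairwise distinct, as one reads off from $a_m=a_{m-1}+3(-2)^m$), and none of them is a median, all $3+3(n+1)=3(n+2)$ lines are distinct. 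The key observation is that these lines split into four pencils: the medians all pass through $e_4$, while for each fixed $i$ the lines $L_i(m)$, $0\le m\le n$, all pass through the single point $\hat e_i$ (for $i=1$ this is $(0,1,-1)$, annihilated both by $x_1$ and by $x_2+x_3$, hence lying on every $a_mx_1+a_{m-1}(x_2+x_3)=0$). Consequently any vertex other than the four base points $e_4,\hat e_1,\hat e_2,\hat e_3$ lies on at most one line of each pencil, so has valency at most $4$.

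Next I would use that $A$ is a projectivity with $A(L_i(m))=L_i(m+1)$, $A(\Lambda_i)=\Lambda_i$ and $A(e_i(m))=e_i(m+1)$: it carries concurrences to concurrences while shifting the index $m$ uniformly, so together with the $\mathfrak S_3$-symmetry permuting $\{1,2,3\}$ it collapses the determination of all multiple points to finitely many representative cases. Using the explicit coordinates $e_1(m)=(c_m,b_m,b_m)$ with $c_m=2b_{m-1}$ and $b_m=c_{m-1}+b_{m-1}$, one checks that $\Lambda_1$, $L_1(m-1)$, $L_2(m)$ and $L_3(m)$ all pass through $e_1(m)$ (the first two meeting there by the intersection $\Lambda_1\cap L_1(m-1)=(2b_{m-1},b_m,b_m)=e_1(m)$, the last two as the $A^m$-images of $L_2,L_3\ni e_1$), giving the quadruple points $e_i(m)$, $1\le m\le n$. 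At the endpoints, $L_i(-1)\notin\sC_{CB}(n)$, which is precisely why $e_i=e_i(0)$ drops to the triple point on $\Lambda_i,L_j(0),L_k(0)$, and why $e_i(n+1)$, lying only on $\Lambda_i$ and $L_i(n)$, drops to a double point. That no further median and no further line $L_i(p)$ passes through $e_i(m)$ then reduces to showing $a_p c_m+2a_{p-1}b_m=0$ forces $p=m-1$, and $e_4$ carries no $L_i(m)$ because $a_m+2a_{m-1}=3\neq0$; both are arithmetic facts about $a_m=1-(-2)^{m+1}$.

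Having identified all vertices of valency $\ge 3$ — the four triple points $e_1,e_2,e_3,e_4$, the $3n$ quadruple points $e_i(m)$ with $1\le m\le n$, and the three points $\hat e_i$ of valency $n+1$ — and having checked, via the growth of $b_m$ and the distinct coordinate shapes, that these points are pairwise distinct and that $e_4,\hat e_i$ carry nothing beyond their own pencils, I read off $t_3=4$, $t_4=3n$, $t_{n+1}=3$ at once. Finally $t_2$ is cleanest to extract from the identity
\[
\binom{3(n+2)}{2}=\sum_{P}\binom{v_P}{2},
\]
valid because any two of the $3(n+2)$ distinct lines of $\PP^2$ meet in exactly one point. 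Substituting $\binom{3}{2}$ for the four triple points, $\binom{4}{2}$ for the $3n$ quadruple points, $\binom{n+1}{2}$ for the three $\hat e_i$, and $\binom{2}{2}=1$ for each double point gives $t_2=3n(n-1)+3$, the three points $e_i(n+1)$ being among those double points.

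The main obstacle I anticipate is the \textbf{exactness} of the valencies in the middle step: ruling out accidental concurrences of lines drawn from three different pencils, and confirming that no extra line slips through an $e_i(m)$ or through a base point. This is exactly where the specific form $a_m=1-(-2)^{m+1}$ is indispensable, since it is what prevents spurious coincidences, and where $A$-equivariance does the essential work of reducing infinitely many potential incidences to a bounded check; everything else is bookkeeping once these exactness statements are in place.
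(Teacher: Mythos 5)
Your proof is correct and follows essentially the same route as the paper: read off the multiple points from the explicit formulae for $L_i(m)$ and $e_i(m)$, use the $\mathfrak S_3$- and $A$-equivariance to reduce to a few representative incidences, and extract $t_2$ from the double-counting identity $\sum_P\binom{v_P}{2}=\binom{3(n+2)}{2}$, which is exactly the displayed relation in the paper's two-line proof. Your additional observation that the $3(n+2)$ lines fall into the four pencils centred at $e_4,\hat e_1,\hat e_2,\hat e_3$ (so that every other vertex has valency at most $4$, making the exhibited quadruple points automatically exact) is a clean way of making rigorous the exactness of the valencies, which the paper leaves implicit.
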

\begin{proof}
It suffices to use the previous formulae, observing that the only point which is fixed by $\mathfrak S_3$ is the point $e_4$.

Observe also that $ t_2 + 3 t_3 + 6 t_4 + 3 \frac{1}{2}  n  (n+1) = \frac{1}{2} ( 3 (n+2) ( 3 (n+2) -1)).$

\end{proof}

\begin{center}

\scalebox{1.3} 
{
\begin{pspicture}(0,0)(10,9)
 \psline(0,0)(10,0)(5,8.66)(0,0)
 \psline(0,0)(7.5,4.33)
  \psline(10,0)(2.5,4.33)
 \psline(5,0)(5,8.66)
  \psline(7.5,4.33)(2.5,4.33)(5,0)(7.5,4.33)
   \psline(5,4.33)(3.75,2.165)(6.25,2.165)(5,4.33)
   \psline[linecolor=red](5,2.165)(4.375, 3.2485 )(5.625,3.248)(5,2.165)
   
   \usefont{T1}{ptm}{m}{n}
\rput(-.5,0){$e_1$}
\rput(10.5,0){$e_2$}
\rput(5,9){$e_3$}

\rput(8.2,4.33){$e_1(1)$}
\rput(1.8,4.33){$e_2(1)$}
\rput(5,-.5){$e_3(1)$}

\rput(3,2.165){$e_1(2)$}
\rput(7,2.165){$e_2(2)$}
\rput(4.5,4.6){$e_3(2)$}

\rput(6.3,3.2485){$e_1(3)$}
\rput(3.7,3.2485){$e_2(3)$}
\rput(5.5,1.8){$e_3(3)$}
\end{pspicture} 
}
\end{center}


\begin{bibdiv}
\begin{biblist}

\bib{barbosfair}{article}{
   author={Barker, Nathan},
   author={Boston, Nigel},
   author={Fairbairn, Ben},
   title={A note on Beauville $p$-groups},
   journal={Exp. Math.},
   volume={21},
   date={2012},
   number={3},
   pages={298--306},
   issn={1058-6458},
   review={\MR{2988581}},
   doi={10.1080/10586458.2012.669267},
}

\bib{bhh}{book}{
   author={Barthel, Gottfried},
   author={Hirzebruch, Friedrich},
   author={H{\"o}fer, Thomas},
   title={Geradenkonfigurationen und Algebraische Fl\"achen},
   language={German},
   series={Aspects of Mathematics, D4},
   publisher={Friedr. Vieweg \& Sohn, Braunschweig},
   date={1987},
   pages={xii+308},
   isbn={3-528-08907-5},
   review={\MR{912097}},
   doi={10.1007/978-3-322-92886-3},
}

\bib{volmax}{article}{
   author={Bauer, Ingrid C.},
   author={Catanese, Fabrizio},
   title={A volume maximizing canonical surface in 3-space},
   journal={Comment. Math. Helv.},
   volume={83},
   date={2008},
   number={2},
   pages={387--406},
   issn={0010-2571},
   review={\MR{2390050}},
   doi={10.4171/CMH/129},
}

\bib{isogenous}{article}{
   author={Bauer, I. C.},
   author={Catanese, F.},
   author={Grunewald, F.},
   title={The classification of surfaces with $p_g=q=0$ isogenous to a
   product of curves},
   journal={Pure Appl. Math. Q.},
   volume={4},
   date={2008},
   number={2, Special Issue: In honor of Fedor Bogomolov.},
   pages={547--586},
   issn={1558-8599},
   review={\MR{2400886}},
   doi={10.4310/PAMQ.2008.v4.n2.a10},
}

\bib{bacainoue}{article}{
   author={Bauer, Ingrid},
   author={Catanese, Fabrizio},
   title={Inoue type manifolds and Inoue surfaces: a connected component of
   the moduli space of surfaces with $K^2=7$, $p_g=0$},
   language={English, with English and Italian summaries},
   conference={
      title={Geometry and arithmetic},
   },
   book={
      series={EMS Ser. Congr. Rep.},
      publisher={Eur. Math. Soc., Z\"urich},
   },
   date={2012},
   pages={23--56},
   review={\MR{2987651}},
   doi={10.4171/119-1/2},
}

\bib{katatabea}{article}{
   author={Beauville, Arnaud},
   title={Some remarks on K\"ahler manifolds with $c_{1}=0$},
   conference={
      title={Classification of algebraic and analytic manifolds},
      address={Katata},
      date={1982},
   },
   book={
      series={Progr. Math.},
      volume={39},
      publisher={Birkh\"auser Boston, Boston, MA},
   },
   date={1983},
   pages={1--26},
   review={\MR{728605}},
   doi={10.1007/BF02592068},
}

\bib{borcea}{article}{
   author={Borcea, Ciprian},
   title={Moduli for Kodaira surfaces},
   journal={Compositio Math.},
   volume={52},
   date={1984},
   number={3},
   pages={373--380},
   issn={0010-437X},
   review={\MR{756728}},
}
\bib{b-w}{article}{
   author={Burns, D. M., Jr.},
   author={Wahl, Jonathan M.},
   title={Local contributions to global deformations of surfaces},
   journal={Invent. Math.},
   volume={26},
   date={1974},
   pages={67--88},
   issn={0020-9910},
   review={\MR{0349675}},
}

\bib{CalabiV}{article}{
   author={Calabi, Eugenio},
   author={Vesentini, Edoardo},
   title={On compact, locally symmetric K\"ahler manifolds},
   journal={Ann. of Math. (2)},
   volume={71},
   date={1960},
   pages={472--507},
   issn={0003-486X},
   review={\MR{0111058}},
}

\bib{ravello}{article}{
   author={Catanese, F.},
   title={Moduli of surfaces of general type},
   conference={
      title={Algebraic geometry---open problems},
      address={Ravello},
      date={1982},
   },
   book={
      series={Lecture Notes in Math.},
      volume={997},
      publisher={Springer, Berlin-New York},
   },
   date={1983},
   pages={90--112},
   review={\MR{714742}},
}

\bib{catmod}{article}{
   author={Catanese, F.},
   title={On the moduli spaces of surfaces of general type},
   journal={J. Differential Geom.},
   volume={19},
   date={1984},
   number={2},
   pages={483--515},
   issn={0022-040X},
   review={\MR{755236 (86h:14031)}},
}

\bib{montecatini}{article}{
   author={Catanese, F.},
   title={Moduli of algebraic surfaces},
   conference={
      title={Theory of moduli},
      address={Montecatini Terme},
      date={1985},
   },
   book={
      series={Lecture Notes in Math.},
      volume={1337},
      publisher={Springer, Berlin},
   },
   date={1988},
   pages={1--83},
   review={\MR{963062}},
   doi={10.1007/BFb0082806},
}

\bib{isog}{article}{
   author={Catanese, Fabrizio},
   title={Fibred surfaces, varieties isogenous to a product and related
   moduli spaces},
   journal={Amer. J. Math.},
   volume={122},
   date={2000},
   number={1},
   pages={1--44},
   issn={0002-9327},
   review={\MR{1737256}},
}

\bib{topm}{article}{
   author={Catanese, F.},
   title={Topological methods in moduli theory},
   journal={Bull. Math. Sci.},
   volume={5},
   date={2015},
   number={3},
   pages={287--449},
   issn={1664-3607},
   review={\MR{3404712}},
   doi={10.1007/s13373-015-0070-1},
}
		
\bib{cat-dettCR}{article}{
   author={Catanese, Fabrizio},
   author={Dettweiler, Michael},
   title={The direct image of the relative dualizing sheaf needs not be
   semiample},
   language={English, with English and French summaries},
   journal={C. R. Math. Acad. Sci. Paris},
   volume={352},
   date={2014},
   number={3},
   pages={241--244},
   issn={1631-073X},
   review={\MR{3167573}},
   doi={10.1016/j.crma.2013.12.015},
}

\bib{cat-dett1}{article}{
   author={Catanese, Fabrizio},
   author={Dettweiler, Michael},
    title={Vector bundles on curves coming from variation of Hodge
   structures},
   eprint={arXiv:1311.3232 [math.AG]},
   date={2013},
   pages={26}
}

\bib{cat-dett2}{article}{
   author={Catanese, Fabrizio},
   author={Dettweiler, Michael},
  title={Vector bundles on curves coming from variation of Hodge
   structures},
   journal={Internat. J. Math.},
   volume={27},
   date={2016},
   number={7},
   pages={1640001, 25},
   issn={0129-167X},
   review={\MR{3521586}},
   doi={10.1142/S0129167X16400012},
}

\bib{cat-roll}{article}{
   author={Catanese, Fabrizio},
   author={Rollenske, S{\"o}nke},
   title={Double Kodaira fibrations},
   journal={J. Reine Angew. Math.},
   volume={628},
   date={2009},
   pages={205--233},
   issn={0075-4102},
   review={\MR{2503241}},
   doi={10.1515/CRELLE.2009.024},
}

\bib{cfg}{article}{
   author={Colombo, Elisabetta},
   author={Frediani, Paola},
   author={Ghigi, Alessandro},
   title={On totally geodesic submanifolds in the Jacobian locus},
   journal={Internat. J. Math.},
   volume={26},
   date={2015},
   number={1},
   pages={1550005, 21},
   issn={0129-167X},
   review={\MR{3313651}},
   doi={10.1142/S0129167X15500056},
}

\bib{dabrowski}{article}{
   author={Dabrowski, Krzysztof},
   title={Moduli spaces for Hopf surfaces},
   journal={Math. Ann.},
   volume={259},
   date={1982},
   number={2},
   pages={201--225},
   issn={0025-5831},
   review={\MR{656662}},
   doi={10.1007/BF01457309},
}

\bib{ev}{book}{
   author={Esnault, H{\'e}l{\`e}ne},
   author={Viehweg, Eckart},
   title={Lectures on vanishing theorems},
   series={DMV Seminar},
   volume={20},
   publisher={Birkh\"auser Verlag, Basel},
   date={1992},
   pages={vi+164},
   isbn={3-7643-2822-3},
   review={\MR{1193913}},
   doi={10.1007/978-3-0348-8600-0},
}

\bib{vanvan}{article}{
   author={Festi, Dino},
   author={Garbagnati, Alice},
   author={van Geemen, Bert},
   author={van Luijk, Ronald},
   title={The Cayley-Oguiso automorphism of positive entropy on a K3
   surface},
   journal={J. Mod. Dyn.},
   volume={7},
   date={2013},
   number={1},
   pages={75--97},
   issn={1930-5311},
   review={\MR{3071466}},
   doi={10.3934/jmd.2013.7.75},
}

\bib{gf}{article}{
   author={Fischer, Wolfgang},
   author={Grauert, Hans},
   title={Lokal-triviale Familien kompakter komplexer Mannigfaltigkeiten},
   language={German},
   journal={Nachr. Akad. Wiss. G\"ottingen Math.-Phys. Kl. II},
   volume={1965},
   date={1965},
   pages={89--94},
   issn={0065-5295},
   review={\MR{0184258}},
}

\bib{fgp}{article}{
   author={Frediani, Paola},
   author={Ghigi, Alessandro},
   author={Penegini, Matteo},
   title={Shimura varieties in the Torelli locus via Galois coverings},
   journal={Int. Math. Res. Not. IMRN},
   date={2015},
   number={20},
   pages={10595--10623},
   issn={1073-7928},
   review={\MR{3455876}},
   doi={10.1093/imrn/rnu272},
}

\bib{gieseker}{article}{
   author={Gieseker, D.},
   title={Global moduli for surfaces of general type},
   journal={Invent. Math.},
   volume={43},
   date={1977},
   number={3},
   pages={233--282},
   issn={0020-9910},
   review={\MR{0498596}},
}

\bib{izv}{article}{
   author={Khirtsebrukh, F.},
   title={Algebraic surfaces with extremal Chern numbers (based on a
   dissertation by T. H\"ofer, Bonn, 1984)},
   language={Russian},
   note={Translated from the English by I. A. Skornyakov;
   International conference on current problems in algebra and analysis
   (Moscow-Leningrad, 1984)},
   journal={Uspekhi Mat. Nauk},
   volume={40},
   date={1985},
   number={4(244)},
   pages={121--129},
   issn={0042-1316},
   review={\MR{807793}},
}

\bib{hirz}{article}{
   author={Hirzebruch, F.},
   title={Arrangements of lines and algebraic surfaces},
   conference={
      title={Arithmetic and geometry, Vol. II},
   },
   book={
      series={Progr. Math.},
      volume={36},
      publisher={Birkh\"auser, Boston, Mass.},
   },
   date={1983},
   pages={113--140},
   review={\MR{717609}},
}

\bib{hwang}{article}{
   author={Hwang, Jun-Muk},
   title={Nondeformability of the complex hyperquadric},
   journal={Invent. Math.},
   volume={120},
   date={1995},
   number={2},
   pages={317--338},
   issn={0020-9910},
   review={\MR{1329044}},
   doi={10.1007/BF01241131},
}

\bib{hm}{article}{
   author={Hwang, Jun-Muk},
   author={Mok, Ngaiming},
   title={Rigidity of irreducible Hermitian symmetric spaces of the compact
   type under K\"ahler deformation},
   journal={Invent. Math.},
   volume={131},
   date={1998},
   number={2},
   pages={393--418},
   issn={0020-9910},
   review={\MR{1608587}},
   doi={10.1007/s002220050209},
}

\bib{inoue}{article}{
   author={Inoue, Masahisa},
   title={On surfaces of Class ${\rm VII}_{0}$},
   journal={Invent. Math.},
   volume={24},
   date={1974},
   pages={269--310},
   issn={0020-9910},
   review={\MR{0342734}},
}

\bib{jy}{article}{
   author={Jost, J{\"u}rgen},
   author={Yau, Shing-Tung},
   title={A strong rigidity theorem for a certain class of compact complex
   analytic surfaces},
   journal={Math. Ann.},
   volume={271},
   date={1985},
   number={1},
   pages={143--152},
   issn={0025-5831},
   review={\MR{779612}},
   doi={10.1007/BF01455803},
}

\bib{kato}{article}{
   author={Kato, Masahide},
   title={Topology of Hopf surfaces},
   journal={J. Math. Soc. Japan},
   volume={27},
   date={1975},
   pages={222--238},
   issn={0025-5645},
   review={\MR{0402128}},
}

\bib{kodaira63}{article}{
   author={Kodaira, K.},
   title={On stability of compact submanifolds of complex manifolds},
   journal={Amer. J. Math.},
   volume={85},
   date={1963},
   pages={79--94},
   issn={0002-9327},
   review={\MR{0153033}},
}

\bib{kodaira}{article}{
   author={Kodaira, K.},
   title={On the structure of compact complex analytic surfaces. I},
   journal={Amer. J. Math.},
   volume={86},
   date={1964},
   pages={751--798},
   issn={0002-9327},
   review={\MR{0187255}},
}

\bib{kodaira2}{article}{
   author={Kodaira, K.},
   title={On the structure of compact complex analytic surfaces. III},
   journal={Amer. J. Math.},
   volume={90},
   date={1968},
   pages={55--83},
   issn={0002-9327},
   review={\MR{0228019}},
}
\bib{kur1}{article}{
   author={Kuranishi, M.},
   title={On the locally complete families of complex analytic structures},
   journal={Ann. of Math. (2)},
   volume={75},
   date={1962},
   pages={536--577},
   issn={0003-486X},
   review={\MR{0141139}},
}
\bib{kur2}{article}{
   author={Kuranishi, M.},
   title={New proof for the existence of locally complete families of
   complex structures},
   conference={
      title={Proc. Conf. Complex Analysis},
      address={Minneapolis},
      date={1964},
   },
   book={
      publisher={Springer, Berlin},
   },
   date={1965},
   pages={142--154},
   review={\MR{0176496}},
}

\bib{meersseman1}{article}{
   author={Meersseman, Laurent},
   title={Foliated structure of the Kuranishi space and isomorphisms of
   deformation families of compact complex manifolds},
   language={English, with English and French summaries},
   journal={Ann. Sci. \'Ec. Norm. Sup\'er. (4)},
   volume={44},
   date={2011},
   number={3},
   pages={495--525},
   issn={0012-9593},
   review={\MR{2839457}},
}

\bib{mok}{article}{
   author={Mok, Ngaiming},
   title={Strong rigidity of irreducible quotients of polydiscs of finite
   volume},
   journal={Math. Ann.},
   volume={282},
   date={1988},
   number={4},
   pages={555--577},
   issn={0025-5831},
   review={\MR{970219}},
   doi={10.1007/BF01462883},
}

\bib{moonen}{article}{
   author={Moonen, Ben},
   title={Special subvarieties arising from families of cyclic covers of the
   projective line},
   journal={Doc. Math.},
   volume={15},
   date={2010},
   pages={793--819},
   issn={1431-0635},
   review={\MR{2735989}},
}

\bib{mostow}{book}{
   author={Mostow, G. D.},
   title={Strong rigidity of locally symmetric spaces},
   note={Annals of Mathematics Studies, No. 78},
   publisher={Princeton University Press, Princeton, N.J.; University of
   Tokyo Press, Tokyo},
   date={1973},
   pages={v+195},
   review={\MR{0385004}},
}

\bib{m-s}{article}{
   author={Mostow, G. D.},
   author={Siu, Yum Tong},
   title={A compact K\"ahler surface of negative curvature not covered by
   the ball},
   journal={Ann. of Math. (2)},
   volume={112},
   date={1980},
   number={2},
   pages={321--360},
   issn={0003-486X},
   review={\MR{592294}},
   doi={10.2307/1971149},
}

\bib{panov}{article}{
   author={Panov, Dmitri},
   title={Complex surfaces with CAT(0) metrics},
   journal={Geom. Funct. Anal.},
   volume={21},
   date={2011},
   number={5},
   pages={1218--1238},
   issn={1016-443X},
   review={\MR{2846388}},
   doi={10.1007/s00039-011-0133-8},
}

\bib{pardini}{article}{
   author={Pardini, Rita},
   title={Abelian covers of algebraic varieties},
   journal={J. Reine Angew. Math.},
   volume={417},
   date={1991},
   pages={191--213},
   issn={0075-4102},
   review={\MR{1103912 (92g:14012)}},
   doi={10.1515/crll.1991.417.191},
}

\bib{roulleau}{article}{
   author={Roulleau, Xavier},
   title={The Fano surface of the Fermat cubic threefold, the del Pezzo
   surface of degree 5 and a ball quotient},
   journal={Proc. Amer. Math. Soc.},
   volume={139},
   date={2011},
   number={10},
   pages={3405--3412},
   issn={0002-9939},
   review={\MR{2813372}},
   doi={10.1090/S0002-9939-2011-10847-5},
}

\bib{seiler}{article}{
   author={Seiler, Wolfgang K.},
   title={Deformations of ruled surfaces},
   journal={J. Reine Angew. Math.},
   volume={426},
   date={1992},
   pages={203--219},
   issn={0075-4102},
   review={\MR{1155754}},
   doi={10.1515/crll.1992.426.203},
}

\bib{siu0}{article}{
   author={Siu, Yum Tong},
   title={The complex-analyticity of harmonic maps and the strong rigidity
   of compact K\"ahler manifolds},
   journal={Ann. of Math. (2)},
   volume={112},
   date={1980},
   number={1},
   pages={73--111},
   issn={0003-486X},
   review={\MR{584075}},
   doi={10.2307/1971321},
}
	
\bib{siu1}{article}{
   author={Siu, Yum Tong},
   title={Nondeformability of the complex projective space},
   journal={J. Reine Angew. Math.},
   volume={399},
   date={1989},
   pages={208--219},
   issn={0075-4102},
   review={\MR{1004139}},
   doi={10.1515/crll.1989.399.208},
}

\bib{siu2}{article}{
   author={Siu, Yum Tong},
   title={Global nondeformability of the complex projective space},
   conference={
      title={Prospects in complex geometry},
      address={Katata and Kyoto},
      date={1989},
   },
   book={
      series={Lecture Notes in Math.},
      volume={1468},
      publisher={Springer, Berlin},
   },
   date={1991},
   pages={254--280},
   review={\MR{1123546}},
   doi={10.1007/BFb0086197},
}

\bib{teleman}{article}{
   author={Teleman, Andrei Dumitru},
   title={Projectively flat surfaces and Bogomolov's theorem on class ${\rm
   VII}_0$ surfaces},
   journal={Internat. J. Math.},
   volume={5},
   date={1994},
   number={2},
   pages={253--264},
   issn={0129-167X},
   review={\MR{1266285}},
   doi={10.1142/S0129167X94000152},
}

\bib{murphy}{article}{
   author={Vakil, Ravi},
   title={Murphy's law in algebraic geometry: badly-behaved deformation
   spaces},
   journal={Invent. Math.},
   volume={164},
   date={2006},
   number={3},
   pages={569--590},
   issn={0020-9910},
   review={\MR{2227692}},
   doi={10.1007/s00222-005-0481-9},
}

\bib{wavrik}{article}{
   author={Wavrik, John J.},
   title={Obstructions to the existence of a space of moduli},
   conference={
      title={Global Analysis (Papers in Honor of K. Kodaira)},
   },
   book={
      publisher={Univ. Tokyo Press, Tokyo},
   },
   date={1969},
   pages={403--414},
   review={\MR{0254882}},
}

\bib{zheng}{article}{
   author={Zheng, Fangyang},
   title={Hirzebruch-Kato surfaces, Deligne-Mostow's construction, and new
   examples of negatively curved compact K\"ahler surfaces},
   journal={Comm. Anal. Geom.},
   volume={7},
   date={1999},
   number={4},
   pages={755--786},
   issn={1019-8385},
   review={\MR{1714937}},
   doi={10.4310/CAG.1999.v7.n4.a4},
}

\end{biblist}
\end{bibdiv}

\end{document}